\documentclass{amsart}
\usepackage[centertags]{amsmath}
\usepackage{amsfonts}
\usepackage{amssymb}
\usepackage{amsthm,mathtools}
\usepackage{amsrefs}

\usepackage[english]{babel}

\makeatletter
\g@addto@macro\bfseries{\boldmath} 
\makeatother

\usepackage[dvipsnames]{xcolor}

\parskip=1ex

\usepackage[breaklinks=true,colorlinks=true,
linkcolor=MidnightBlue,citecolor=MidnightBlue,
urlcolor=MidnightBlue]{hyperref}

\usepackage{tikz}
\usepackage{tikz-cd}
\usepackage[normalem]{ulem}

\usepackage[shortlabels]{enumitem} 
\usepackage{bm}

\setlist[enumerate,1]{label=\textup{(\alph*)}}

\newcommand{\str}{\operatorname{str-exp}}
\newcommand{\smo}{\operatorname{smo}}
\newcommand{\aconv}{\operatorname{aconv}}
\newcommand{\conv}{\operatorname{conv}}
\DeclareMathOperator*{\esssup}{ess\,sup}

\newcommand{\K}{\mathbb{K}}
\newcommand{\N}{\mathbb{N}}

\newcommand{\Lin}{\mathcal{L}}
\newcommand{\ELL}{\mathcal{L}}

\newcommand{\restricted}{\mathord{\upharpoonright}}

\newcommand{\eps}{\varepsilon}
\newcommand{\cl}[1]{\overline{#1}}                          

\newcommand\restr[2]{{
    \left.\kern-\nulldelimiterspace 
      #1 
      \littletaller 
    \right|_{#2} 
  }}
\newcommand{\littletaller}{\mathchoice{\vphantom{\big|}}{}{}{}}

\DeclareMathOperator{\ACK}{ACK}

\DeclareMathOperator{\dist}{dist}
\DeclareMathOperator{\sgn}{sgn}
\DeclareMathOperator{\re}{Re}
\DeclareMathOperator{\spann}{span}
\DeclareMathOperator{\diam}{diam}

\DeclareMathOperator{\DP}{\mathcal{DP}}
\DeclareMathOperator{\NA}{NA}
\DeclareMathOperator{\QNA}{QNA}
\DeclareMathOperator{\RSE}{RSE}

\DeclareMathOperator{\UQNA}{UQNA}
\DeclareMathOperator{\ASE}{ASE}
\DeclareMathOperator{\FR}{\mathcal{FR}}
\DeclareMathOperator{\SE}{SE}
\DeclareMathOperator{\comp}{\mathcal{K}}

\newcommand{\essup}{\operatorname{ess~sup}}

\theoremstyle{plain}
\newtheorem{theorem}{Theorem}[section]
\newtheorem{cor}[theorem]{Corollary}
\newtheorem{corollary}[theorem]{Corollary}
\newtheorem{prop}[theorem]{Proposition}
\newtheorem{lemma}[theorem]{Lemma}
\newtheorem{lem}[theorem]{Lemma}

\theoremstyle{definition}
\newtheorem{example}[theorem]{Example}
\newtheorem{rem}[theorem]{Remark}
\newtheorem{remark}[theorem]{Remark}
\newtheorem{definition}[theorem]{Definition}

\begin{document}
\title[Range strongly exposing operators]{Range strongly exposing operators between Banach spaces} 

\author[Choi]{Geunsu Choi}
\address[Choi]{Department of Mathematics Education, Sunchon National University, 57922 Jeonnam, Republic of Korea \newline
\href{http://orcid.org/0000-0002-4321-1524}{ORCID: \texttt{0000-0002-4321-1524}}}
\email[Choi]{\texttt{gschoi@scnu.ac.kr}}

\author[del R\'{\i}o]{Helena del R\'{\i}o} 
\address[Del R\'{\i}o]{Department of Mathematical Analysis and Institute of Mathematics (IMAG), University of Granada, E-18071 Granada, Spain \newline
	\href{https://orcid.org/0009-0004-5078-6993}{ORCID: \texttt{0009-0004-5078-6993} }}
\email[Del R\'{\i}o]{\texttt{helenadelrio@ugr.es}}

\author[Fovelle]{Audrey Fovelle} 
\address[Fovelle]{Department of Mathematical Analysis and Institute of Mathematics (IMAG), University of Granada, E-18071 Granada, Spain \newline
	\href{https://orcid.org/0009-0000-5454-9726}{ORCID: \texttt{0009-0000-5454-9726} }}
\email[Fovelle]{\texttt{audrey.fovelle@ugr.es}}

\author[Jung]{Mingu Jung} 
\address[Jung]{June E Huh Center for Mathematical Challenges, Korea Institute for Advanced Study, 02455 Seoul, Republic of Korea\newline
\href{https://orcid.org/0000-0003-2240-2855}{ORCID: \texttt{0000-0003-2240-2855}}}
\email[Jung]{jmingoo@kias.re.kr}

\author[Mart\'in]{Miguel Mart\'in}
\address[Mart\'{\i}n]{Department of Mathematical Analysis and Institute of Mathematics (IMAG), University of Granada, E-18071 Granada, Spain \newline
\href{http://orcid.org/0000-0003-4502-798X}{ORCID: \texttt{0000-0003-4502-798X}}}
\email[Mart\'{\i}n]{\texttt{mmartins@ugr.es}}

\date{\today}

\subjclass[2020]{Primary 46B04; Secondary 46B20, 46B22, 46B25, 47B01, 47B07}

\keywords{Banach space, Norm-attaining operator, Strongly exposed point, Radon--Nikod\'{y}m property, Weakly compact operator, compact operator, finite-rank operator}

\begin{abstract}
We introduce a new class of bounded linear operators, called \textit{range strongly exposing} (\textit{RSE}, in short) operators, which form a natural intermediate class: weaker than absolutely strongly exposing operators, yet stronger than both uniquely quasi norm-attaining and classical norm-attaining operators. Several foundational results on norm-attaining operators are extended to the RSE setting. Among our main contributions, we establish that for every infinite-dimensional Banach space $Y$, there exists a Banach space $X$ such that the RSE operators from $X$ to $Y$ are not dense—an RSE analogue of a result by Acosta (1999) which applies only when $Y$ is strictly convex. We also show that the Radon-Nikod\'ym property of $Y$ is sufficient to obtain that RSE operators from $L_1(\mu)$ to $Y$ are dense and that this is also necessary if $\mu$ is not purely atomic. This extends and sharpens classical results by Uhl (1976). As a consequence, we prove that the set of RSE operators between $L_1(\mu)$ and $L_1 (\nu)$ is dense if and only if at least one of the measures $\mu$ or $\nu$ is purely atomic, in contrast with the classical result by Iwanik (1979) which guarantees the denseness of norm-attaining operators for all measures $\mu$ and $\nu$. We also prove that weakly compact operators from any $C(K)$ space can always be approximated by (weakly compact) RSE operators, thereby strengthening a result of Schachermayer (1983). Additionally, we present several improvements of more recent results concerning finite-rank operators and $\Gamma$-flat operators which give, in particular, RSE versions of classical results on compact operators by Johnson–-Wolfe (1979). Finally, we discuss RSE counterparts of results by Zizler and Lindenstrauss on the denseness of operators whose adjoints attain their norm.
\end{abstract}

\maketitle
{\parskip=0ex 	\tableofcontents }

\section{Introduction}

Let $X$ and $Y$ be Banach spaces over the field $\mathbb{K}$ (which will be the field $\mathbb{R}$ of real numbers or the field $\mathbb{C}$ of complex numbers). Our notation is standard, as in most textbooks on Banach space theory; we refer, e.g., to \cites{FHHMZ, megginson}. For example, we denote by $B_X$ and $S_X$ the closed unit ball and the unit sphere of $X$, respectively, and $\mathbb{T}:=S_{\mathbb{K}}$ is the set of modulus one scalars, $\Lin(X,Y)$ is the space of all bounded linear operators from $X$ into $Y$, and $X^*=\Lin(X,\K)$ denotes the (topological) dual space of $X$. Most of the standard concepts from Banach space theory used in the paper (such as rotundity, smoothness, and various operator ideals) can be found in Subsection~\ref{subsec:needed-definitions} at the end of this introduction.

It is said that $T \in \Lin(X,Y)$ \emph{attains its norm} (or that $T$ is \emph{norm-attaining}) if the operator norm $\|T\|$ coincides with $\max \{\|Tx\|\colon x \in B_X\}$, that is, if $T(B_X)\cap \|T\|S_Y\neq \emptyset$. We denote by $\NA(X,Y)$ the set of norm-attaining operators from $X$ into $Y$, and write $\NA(X)$ when $Y = \mathbb{K}$ for simplicity. The study of norm-attaining operators goes back to the seminal paper by Lindenstrauss \cite{Lin}, who proved, among other results, that there are Banach spaces $X$ and $Y$ for which $\NA(X,Y)$ fails to be dense in $\Lin(X,Y)$, and that $\NA(X,Y)$ is dense if either $X$ is reflexive or if $Y$ has a geometrical property called Lindenstrauss' property $\beta$ (which is satisfied by finite-dimensional polyhedral spaces and by closed subspaces of $\ell_\infty$ containing the canonical copy of $c_0$, among many other examples). To simplify the notation, Lindenstrauss introduced the following terminology. A Banach space $X$ has \textit{property A} if $\NA(X,Y)$ is dense in $\Lin(X,Y)$ for every target space $Y$. Analogously, a Banach space $Y$ has \textit{property B} if $\NA(X,Y)$ is dense in $\Lin(X,Y)$ for every domain space $X$. In 1977, Bourgain linked in \cite{Bourgain77} the study of norm-attaining operators with the Radon-Nikod\'ym property (for short, RNP) by showing that the RNP implies property A and presented a sort of reciprocal result. Huff improved the result further in \cite{Huff80} by showing that $X$ must have the RNP if any renorming of $X$ has property A. We refer the reader to the expository papers \cites{acosta-survey, ChoiKimLee-survey2024, Martin-RACSAM2016} and the recent papers \cites{Bachir, CJ23, Fovelle, JMR23,JungMartinezRueda-2024} and references therein for more information and background. Let us mention that both properties A and B are of isometric nature, that is, depend on the concrete norm we are using in the involved spaces, and that every separable Banach space can be renormed to have property A and every Banach space can be renormed to have property B.

To relate Bourgain's work with the concept we are introducing in this paper, let us describe it more in detail. What was actually proved in \cite{Bourgain77} is that the set of all absolutely strongly exposing operators (denoted by $\ASE(X,Y)$) is dense in $\Lin(X,Y)$ when $X$ has the RNP. Recall that $T\in \ASE(X,Y)$ if there exists $x_0 \in S_X$ such that whenever a sequence $(x_n) \subseteq B_X$ satisfies that $\|Tx_n\| \to \|T\|$, one can find a subsequence $(x_{\sigma(n)}) \subseteq (x_n)$ and $\theta \in \mathbb{T}$ such that $x_{\sigma(n)} \to \theta x_0$. That is, $T$ attains the norm at a unique element (up to rotation) in the strong way that all sequences in $B_X$ approximating the norm have to converge (up to rotation) to this element. This is, of course, a very strong concept. For instance, the point at which $T$ attains its norm has to be a strongly exposed point of $B_X$ (immediately) and the denseness of $\ASE(X,Y)$ for a non-zero $Y$ implies the denseness of the set of strongly exposing functionals in $X^*$ \cite[Proposition 1.5]{JMR23} (see Subsection~\ref{subsec:needed-definitions} for the definitions of strongly exposed points and strongly exposing functionals). We refer the reader to the recent paper \cite{JMR23} for extensions of Bourgain's work. Although Bourgain's positive result is both strong and useful when $X$ has the RNP, the concept of absolutely strongly exposing operator turns out to be highly restrictive when the space $X$ fails the RNP and is entirely useless when $B_X$ has no strongly exposed points. This occurs in classical Banach spaces such as infinite-dimensional $C(K)$ spaces and $L_1(\mu)$ spaces when $\mu$ is atomless. 

Let us now turn to property B, the counterpart to property A for range spaces. This property is also connected to the RNP: If every equivalent renorming of $Y$ has property B, then $Y$ has the RNP \cites{Bourgain77, Huff80}. However, unlike property A, the converse is false; as observed by Gowers \cite{Gowers}, the Banach spaces $\ell_p$ fail to have property B for $1<p<\infty$. Actually, the same happens for $\ell_1$ \cite{acosta-L1}, for all strictly convex infinite-dimensional Banach spaces \cite{acosta1}, and for $\ell_p$-sums of finite-dimensional Banach spaces for $1<p<\infty$ \cite{Fovelle}. Nevertheless, there remains a way to connect the RNP of a target space with its role as a `universal range' for the density of a kind of norm-attaining operators in a weaker sense. For this purpose, the notion of quasi norm-attainment was introduced in \cite{CCJM} as follows. An operator $T\in \Lin(X,Y)$ \emph{quasi attains its norm} (we write $T \in \QNA(X,Y)$) if there exists a sequence $(x_n) \subseteq B_X$ and $y_0 \in \|T\|S_Y$ such that $Tx_n \to y_0$ (that is, if $\overline{T(B_X)}\cap \|T\|S_Y\neq \emptyset$). It was shown in the aforementioned paper \cite{CCJM} that if $Y$ has the RNP, then $\QNA(X,Y)$ is dense in $\Lin(X,Y)$ for every Banach space $X$ and, conversely, the RNP is necessary to get that $\QNA(X,Y')$ is dense in $\Lin(X,Y')$ for every Banach space $X$ and every equivalent renorming $Y'$ of $Y$. It was actually proved in \cite{CCJM} that if $T \in \Lin (X,Y)$ is a strong Radon-Nikod\'ym operator (see the definition in Subsection~\ref{subsec:needed-definitions}, observe that in the case when $Y$ has the RNP, then every operator in $\Lin (X,Y)$ is strong Radon-Nikod\'ym), then $T$ can be approximated by uniquely quasi norm-attaining operators. We say that an operator $T\in \Lin(X,Y)$  \emph{uniquely quasi attains its norm} (and write $T \in \UQNA(X,Y)$) if there exists $y_0 \in \|T\|S_Y$ such that whenever a sequence $(x_n) \subseteq B_X$ satisfies $\|Tx_n\| \to \|T\|$, one can find a subsequence $(x_{\sigma(n)}) \subseteq (x_n)$ and $\theta \in \mathbb{T}$ such that $Tx_{\sigma(n)} \to \theta y_0$. Assuming that either the domain space $X$ or the range space $Y$ has the RNP (or that we are dealing with compact or weakly compact operators), the set of uniquely quasi norm-attaining (respectively, compact or weakly compact) operators between $X$ and $Y$ is dense in $\Lin (X,Y)$ \cite[Corollaries 3.10 and 3.11]{CCJM}. This notion is less useful than the one of absolutely strongly exposing operator, as it does not imply the operator to attain its norm. Instead, it provides a ``supremum value" that is unique (up to rotations) rather than an actual ``maximum value".

In this paper, we introduce the following notion of norm attainment, which is weaker than being absolutely strongly exposing, but stronger than the mere norm attainment and than being $\UQNA$. 

\begin{definition}
A bounded linear operator $T \in \Lin(X,Y)$ is said to \emph{strongly exposes its range} (or that it is \emph{range strongly exposing}, writing $T\in\RSE(X,Y)$), if there exists $x_0 \in S_X$ such that whenever $(x_n) \subseteq B_X$ satisfies  $\|Tx_n\| \to \|T\|$, one can find a subsequence $(x_{\sigma(n)}) \subseteq (x_n)$ and $\theta \in \mathbb{T}$ such that $Tx_{\sigma(n)} \to \theta Tx_0$. Sometimes, we will simply say that $T \in \Lin(X,Y)$ is an $\RSE$ operator when $T$ belongs to the class $\RSE(X,Y)$.
\end{definition}

The motivation behind the terminology ``$\RSE$'' stems from the following geometric observation: If $T \in \RSE(X,Y)$ witnessed by $x_0 \in S_X$, then $Tx_0$ is indeed a strongly exposed point of $\cl{T(B_X)}$ (see Lemma \ref{lem:strexp}). This strongly exposed point lies in the range space, and therefore the letter R in $\RSE$ is understood to signify the range space. As expected, the letters ``SE" stand for ``strongly exposing''. It will turn out that being a range strongly exposing operator is equivalent to being both uniquely quasi norm-attaining and norm-attaining (in the classical sense) simultaneously, Proposition \ref{prop:una=na+uqna}. Let us mention that the denseness of both $\NA(X,Y)$ and $\UQNA(X,Y)$ does not guarantee the denseness of $\RSE(X,Y)$, see Example~\ref{example:beta+RNP-notimplyRSEdense}. Observe that while an operator in the $\RSE$ class may attain its norm at many elements of the domain space, the operator has a unique (up to rotation) ``strong maximum value'' (in the sense that the image of any sequence approximating the norm of the operator has a subsequence which converges to a rotation of the maximum value).

The aim of this paper is to compare this new class of operators with previously studied notions of norm-attaining operators and to deepen the understanding of the interaction between the geometry of Banach spaces and norm-attaining theory. As we will see, the denseness of $\RSE$ operators has stronger consequences than the denseness of the norm-attaining operators.

Let us present the outline of the paper together with the main results obtained. The introduction concludes with Subsection \ref{subsec:needed-definitions} containing all the terminology we will use along the paper. 

Section \ref{sec:pre} presents all the preliminary and technical results on $\RSE$ and $\UQNA$ operators which we will use throughout the paper.

In Section \ref{sec:classical-results} we provide an $\RSE$ analogues of several classical results of norm-attaining operators. We first show in Subsection \ref{subsec:Acosta-RSE} that for every in\-fi\-nite-di\-men\-sion\-al Banach space $Y$, there is a Banach space $X$ such that $\RSE(X,Y)$ is not dense, that is, no infinite-dimensional Banach space satisfies the $\RSE$  version of Lindenstrauss' property B. Recall that every Banach space can be equivalently renormed to have Lindenstrauss' property $\beta$ \cite{Partington}, hence Lindenstrauss' property B. Subsection \ref{subsec:L1} deals with operators acting from $L_1(\mu)$ spaces, refining and extending the results of Uhl in 1976 \cite{Uhl}. Namely, we show that strong Radon-Nikod\'ym (in particular, weakly compact) operators from $L_1(\mu)$ spaces can be approximated by $\RSE$ operators. Conversely, we show that the denseness of $\RSE(L_1(\mu),Y)$ in $\ELL(L_1(\mu),Y)$ for a non purely atomic finite measure $\mu$, implies that $Y$ has the RNP. In contrast, the analogous result for norm-attaining operators holds only when $Y$ is strictly convex (for instance, $\NA(L_1(\mu),c_0)$ is dense for every measure $\mu$). As a consequence, we fully characterize when $\RSE(L_1(\mu),L_1(\nu))$ is dense by showing that it is the case exactly when at least one of the measures $\mu$ or $\nu$ is purely atomic---unlike the case of norm-attaining operators, where $\NA(L_1(\mu),L_1(\nu))$ is dense for all measures $\mu$ and $\nu$ \cite{Iwanik}. Next, we prove in Subsection \ref{subsec:W-on-CK} that weakly compact operator acting from a $C(K)$ space can be approximated by (weakly compact) $\RSE$ operators, improving a classical result by Schachermayer \cite{Schachermayer} of 1983. Finally, we extend the classical results of Johnson--Wolfe \cite{JW79} on norm-attaining compact operators to the $\RSE$ version in Subsection \ref{subsec:finiterank-compact}. In fact, we follow the ideas given in \cite{KLMW} to actually deal with finite-rank operators. The main result here shows that finite-rank operators defined on Banach spaces with ``sufficiently many'' linear subspaces inside their set of norm-attaining functionals can be approximated by $\RSE$ operators, a result that covers the case of compact operators from $C(K)$ and $L_1(\mu)$ spaces (by virtue of the approximation properties of their dual spaces) and many more examples. Let us recall that there are compact operators which cannot be approximated by norm attaining ones \cite{MarJFA20214}, and that the question of whether finite-rank operators can be always approximates by norm attaining ones is an old open problem \cite{KLMW}.

The aim of Section \ref{sec:ACK} is to extend the recent results in \cite{CJ23} on range spaces which have quasi-ACK structure, hence extending the results on ACK-structure of \cite{CGKS2018} (which include $C(K)$ spaces, uniform algebras, and many more spaces) and on the property quasi-$\beta$ of \cite{AAP2} (in particular, for Lindenstrauss' property $\beta$), see the paragraph preceding Theorem \ref{theo:smooth_dense}. We get a very general result on the denseness of $\RSE$ operators when the range space has quasi-ACK structure and the domain space satisfies that $\smo ({X^*}) \cap \NA(X)$ is dense in $X^*$ (this happens, for instance, when $X$ is strictly convex). As a consequence, if $\smo ({X^*}) \cap \NA(X)$ is dense in $X^*$ and $Y$ has property quasi-$\beta$ (for instance, if $Y$ is a closed subspace of $c_0(\Gamma)$), then every ideal $\mathcal{I}$ contained in the ideal of Dunford--Pettis operators is also contained in the closure of the set of $\RSE$ operators intersected with $\mathcal{I}$. Also, if $\smo ({X^*}) \cap \NA(X)$ is dense in $X^*$ and $Y$ is a predual of an $L_1$ space, then compact operators from $X$ to $Y$ can be approximated by $\RSE$ compact operators. Observe that all the results in this section have hypotheses in both the domain and the range space, but this is needed when the range space is infinite-dimensional as shown by Theorem~\ref{theorem:no-infdim-hasRSE-B} and Proposition~\ref{prop:no-universal-RSE-B-compact}. Let us comment that even in the case when the range space is a finite-dimensional $\ell_\infty$ space, we do not know if $\RSE$ operators are always dense without additional assumptions on the domain space.

Section \ref{sec:Zizler} deal with operators whose adjoints are $\RSE$. We first note that there is no $\RSE$ analogue of Zizler's (resp.\ Lindenstrauss's) results on the denseness of operators whose first (resp.\ second) adjoint attain their norm. Nevertheless, we show that Asplund operators with closed range can be approximated by Asplund operators whose adjoints belong to $\RSE$. In particular, finite-rank operators can always be approximated by finite-rank operators whose adjoints belong to $\RSE$ (so, compact operators can also be approximated by such operators under suitable approximation property conditions).

\subsection{Definitions of Key Concepts Used in the Paper}\label{subsec:needed-definitions}
We include here the definitions and basic properties of the concepts that we will use in the paper, even though most of them are nowadays well known. We refer to \cites{FHHMZ, megginson} for background. In all this subsection, $X$ and $Y$ will be Banach spaces. We will write $\operatorname{ext}(C)$ to denote the set of extreme points of $C$.

For a nonempty bounded subset $C$ of $X$, a point $x_0 \in C$ is called a \emph{strongly exposed point of $C$} if there is $x^* \in X^*\setminus \{0\}$ such that $\re x^*(x_0) = \sup_{x \in C} \re x^* (x)$ and whenever a sequence $(x_n) \subseteq C$ satisfies that $\lim_n \re x^* (x_n) = x^* (x_0)$, then $(x_n)$ converges in norm to $x_0$. In this case, we say that $x^*$ \emph{strongly exposes} $x_0$ in $C$ and $x^*$ is said to be a \emph{strongly exposing functional} of $C$. We write $\str(C)$ and $\SE(C)$ for, respectively, the set of strongly exposed points of $C$ and the set of strongly exposing functionals of $C$.

A point $x \in X$ is said to be a \textit{locally uniformly rotund point} (\emph{LUR point}, for short) if whenever $(x_n)$ is a sequence in $X$ such that $\|x_n\| \leq \|x\|$ for every $n \in \mathbb{N}$ and $\|x_n + x \| \to 2 \|x\|$, then $\|x_n - x\| \to 0$. If all elements in $X$ (equivalently, in $S_X$) are LUR points, then $X$ is called \textit{locally uniformly rotund} (\emph{LUR}, for short). It is immediate that a LUR point $x\in S_X$ is strongly exposed by all functionals in $X^*$ which attain their norm on $x$.

A closed convex subset $D$ of $X$ is said to be an \textit{Radon-Nikod\'ym property set} (RNP set, in short) if every subset of $D$ contains slices (i.e.\ nonempty intersections with open half-spaces) of arbitrary small diameter. A point $x_0$ of a closed bounded set $C$ is said to be a \textit{denting point} if for every $\varepsilon>0$ there exists a slice $S$ of $C$ such that $x_0\in S$ and $\diam (S)<\eps$. Clearly, every strongly exposed point of $C$ is also a denting point, while the opposite is not always true even in finite-dimensional spaces.

A point $x\in X$ is called a \textit{G\^{a}teaux} (resp.\ \textit{Fr\'echet}) \textit{differentiable point} if the norm is G\^{a}teaux (resp.\ Fr\'echet) differentiable at $x$, i.e., the limit 
\[
\lim_{t \to 0} \frac{\|x+th\|-\|x\|}{t}
\]
exists for every $h \in X$ (resp.\ uniformly $h \in S_X$). By the \v{S}mulyan's lemma (see for example \cite[Corollary 7.22]{FHHMZ}), a point $x \in X$ is a G\^{a}teaux (resp.\ Fr\'echet) differentiable point if and only if whenever $(f_n)$ and $(g_n)$ are sequences in $B_{X^*}$ such that $f_n(x) \to \|x\|$ and $g_n (x) \to \|x\|$, one has that the sequence $(f_n-g_n)$ converges to $0$ in the weak-star topology (resp.\ in norm topology). Besides, $x\in X$ is a G\^{a}teaux differentiable point if and only if there exists a unique $x^* \in S_{X^*}$ such that $x^* (x) = \|x\|$, and we also say that $x$ is a \textit{smooth point} of $X$. We write $\smo(X)$ for the set of smooth points in $X$. 

Let us now recall the different subsets of $\ELL(X,Y)$ that we will use all along the paper. The operator ideals of finite-rank operators, compact operators, and weakly compact operators will be denoted by $\FR(X,Y)$, $\comp(X,Y)$, and $\mathcal{W}(X,Y)$, respectively. We say that $T \in \Lin(X,Y)$ is an \textit{Radon-Nikod\'ym operator} (\emph{RN operator}, in short) if for every finite measure space $(\Omega, \Sigma, \mu)$ and every vector measure of bounded variation $G\colon\Sigma \to X$, the measure $T\circ G \colon\Sigma \to Y$ has a Bochner integrable Radon-Nikod\'ym derivative \cite{Edgar}. Also, recall $T \in \Lin (X,Y)$ is a \textit{strong Radon–Nikod\'ym operator} (\emph{strong RN operator}, in short) if $\overline{T (B_X)}$ is an RNP set. Let us denote by $\mathcal{SRN}(X,Y)$ the class of strong RN operators from $X$ into $Y$. Notice that every strong RN operator is an RN operator \cite{Edgar}; but the converse is not true in general \cite{GS84}. Recall also that $T \in \Lin (X,Y)$ is an \textit{Dunford-Pettis operator} if $T$ sends weakly convergent sequences to norm convergent sequences. Let us denote by $\DP(X,Y)$ the operator ideal of Dunford-Pettis operators. An operator $T\in\Lin(X,Y)$ is called an \emph{Asplund operator} if it factors through an Asplund space \cite{Edgar}; the collection of such operators is denoted by $\mathcal{A}(X,Y)$.

Finally, a Banach space $X$ is said to have the \textit{Radon-Nikod\'ym property with respect to a finite measure space $(\Omega,\Sigma,\mu)$} if each $\mu$-continuous vector measure $G\colon \Sigma \to X$ of bounded variation has a Bochner integrable Radon-Nikod\'ym derivative. If $X$ has the Radon-Nikod\'ym property with respect to every finite measure space, we say $X$ has the \textit{Radon-Nikod\'ym property} (RNP, in short). It is known that $X$ has the RNP if and only if $X$ has the RNP with resepct to the Lebesgue measure on $[0,1]$ \cite[Corollary V.3.8]{DU}. More generally, $X$ has the RNP if and only if $X$ has the RNP with respect to some finite measure space $(\Omega,\Sigma,\mu)$ where $\mu$ is not purely atomic \cite[Theorem~2]{Chatterji}. Let us remark that $X$ has the RNP if and only if the identity operator on $X$ is a (strong) RN operator, which is also equivalent to the unit ball $B_X$ being an RNP set. 

\section{Preliminary Results}\label{sec:pre}
Our first aim in this section is to discuss the relation between all the notions of norm attainment that we have presented in the introduction. We start with a diagram showing the implications between the properties.
\begin{figure}[ht!]
\centering
\[
\begin{tikzcd}
	&& \text{UQNA} \\
	\text{ASE} & \RSE && \text{QNA} \\
	&& \text{NA}
	\arrow["\textup{(iv)}", Rightarrow, from=1-3, to=2-4] 
	\arrow["\textup{(i)}", Rightarrow, from=2-1, to=2-2] 
	\arrow["\textup{(ii)}", Rightarrow, from=2-2, to=1-3] 
	\arrow["\textup{(iii)}"', Rightarrow, from=2-2, to=3-3] 
	\arrow["\textup{(v)}"', Rightarrow, from=3-3, to=2-4] 
\end{tikzcd}
\]
\caption{Relation between the different kinds of norm attainment}
\label{fig:relation}
\end{figure}

We next emphasise that none of the implications above can be reversed in general or, equivalently, that the sets in the above diagram are different. Even more, we can find examples for each case in which the bigger set is dense while the smaller is not.

\begin{remark}\label{remark:implications}
None of the implications in the above diagram can be reversed in general, and neither $\NA(X,Y)$ is contained in the closure of $\UQNA(X,Y)$ nor $\UQNA(X,Y)$ is contained in the closure of $\NA(X,Y)$ . 
\begin{itemize}
\itemsep0.25em
\item Notice that $\RSE(X, \mathbb{K}) = \NA(X,\mathbb{K})$ and $\UQNA(X,\mathbb{K}) = \QNA(X,\mathbb{K})=\ELL(X,\mathbb{K})$ for every Banach space $X$. Thus, in particular, $\RSE(c_0,\mathbb{K})$ is dense in $\Lin(c_0,\mathbb{K})$, while the set $\ASE(c_0,\mathbb{K})$ is trivial (since there is no strongly exposed point of $B_{c_0}$); this shows that (i) cannot be reversed.
\item  It was shown by Gowers \cite{Gowers} that there is a Banach space $G$ such that the set $\NA(G,\ell_2)$ is not dense in $\Lin (G, \ell_2)$. Therefore, the set $\RSE(G, \ell_2)$ is not dense in $\Lin (G, \ell_2)$. On the other hand, since $\ell_2$ has the RNP, the set $\UQNA(G,\ell_2)$ is dense in $\Lin(G,\ell_2)$ by \cite[Corollary 3.10]{CCJM}. Thus, none of the implications (ii) and (v) can be reversed. Besides, it also follows that $\UQNA(X,Y)$ is not always contained in the closure of $\NA(X,Y)$.
\item The set $\NA(c_0,c_0)$ is dense in $\Lin(c_0,c_0)$ by an old result of Lindenstrauss \cite[Proposition 3]{Lin}, while the identity operator on $c_0$ does not belong to the norm closure of $\UQNA (c_0, c_0)$ \cite[Example 7.8]{CCJM}. So, (iii) and (iv) cannot be reversed. It is also shown that $\NA(X,Y)$ is not always contained in the closure of $\UQNA(X,Y)$.
\end{itemize} 
\end{remark}

The next example also shows that the implication (iii) cannot be reversed even when both the domain and range spaces are of finite dimension (greater than one):

\begin{example}
For every $n \geq 2$, let $Y$ be a Banach space with $\dim Y\geq 2$. Then, $$\RSE(\ell_1^n, Y) \neq \ELL(\ell_1^n , Y)=\NA(\ell_1^n,Y).$$ Indeed, take two linearly independent vectors $u, v \in S_Y$ such that for any $\alpha, \beta \in \mathbb{K}$ with $|\alpha|+|\beta| = 1$, $\| \alpha u + \beta v \| = 1$ if and only if $\alpha =0$ or $\beta =0$ (this is clearly possible in every Banach space of dimension at least two). Let $T\colon \ell_1^n \rightarrow Y$ be defined by $T(e_1)=u$, $T(e_2) = v$, and $T(e_j)=0$ for $j >2$. Then, it is clear that $T \not\in \RSE( \ell_1^n, Y)$. 
\end{example} 

Nevertheless, there is a significant relationship between the properties in Figure~\ref{fig:relation}: being $\RSE$ is equivalent to simultaneously belonging to the classes $\NA$ and $\UQNA$.

\begin{prop}\label{prop:una=na+uqna}
Let $X$ and $Y$ be Banach spaces. Then, 
\[
\RSE(X,Y) = \NA(X,Y) \cap \UQNA(X,Y).
\]
\end{prop}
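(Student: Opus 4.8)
The plan is to prove the set equality by establishing both inclusions, where the nontrivial direction will be showing that $\NA(X,Y) \cap \UQNA(X,Y) \subseteq \RSE(X,Y)$.

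\medskip

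First I would handle the easy inclusion $\RSE(X,Y) \subseteq \NA(X,Y) \cap \UQNA(X,Y)$. Suppose $T \in \RSE(X,Y)$ witnessed by some $x_0 \in S_X$. The defining property immediately gives $\UQNA$: taking $y_0 := Tx_0$, any sequence $(x_n) \subseteq B_X$ with $\|Tx_n\| \to \|T\|$ admits a subsequence with $Tx_{\sigma(n)} \to \theta Tx_0 = \theta y_0$, and one checks $\|y_0\| = \|Tx_0\| = \|T\|$ so that $y_0 \in \|T\|S_Y$ (this norm identity is itself a consequence of the $\RSE$ condition applied to the constant sequence $x_n = x_0$, forcing $\|Tx_0\| = \|T\|$). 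In particular $T$ attains its norm at $x_0$, so $T \in \NA(X,Y)$, and simultaneously $T \in \UQNA(X,Y)$.

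\medskip

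The reverse inclusion is the main content. Assume $T \in \NA(X,Y) \cap \UQNA(X,Y)$. Since $T \in \UQNA(X,Y)$, fix $y_0 \in \|T\|S_Y$ such that every norm-approximating sequence has a subsequence whose image converges (up to rotation) to $\theta y_0$. Since $T \in \NA(X,Y)$, there is $x_0 \in S_X$ with $\|Tx_0\| = \|T\|$. The goal is to show that this $x_0$ witnesses the $\RSE$ property, i.e. that images of approximating sequences converge to rotations of $Tx_0$ rather than merely to rotations of the abstract point $y_0$. The key observation is that the constant sequence $x_n = x_0$ is itself norm-approximating, so applying the $\UQNA$ condition to it yields $Tx_0 = \theta_0 y_0$ for some $\theta_0 \in \mathbb{T}$; hence $Tx_0$ and $y_0$ differ only by a rotation. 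I would then take an arbitrary $(x_n) \subseteq B_X$ with $\|Tx_n\| \to \|T\|$, extract via $\UQNA$ a subsequence and $\theta \in \mathbb{T}$ with $Tx_{\sigma(n)} \to \theta y_0 = \theta\,\overline{\theta_0}\, Tx_0$, and absorb the scalar $\theta\overline{\theta_0} \in \mathbb{T}$ into a single rotation, which is exactly the $\RSE$ conclusion with witness $x_0$.

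\medskip

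The only subtlety to watch — and the step I would treat most carefully — is the bookkeeping of the unimodular scalars in the complex case, ensuring that the rotation linking $y_0$ to $Tx_0$ composes correctly with the rotation produced by $\UQNA$ so that the final limit is a genuine rotation of $Tx_0$; this is routine since $\mathbb{T}$ is closed under multiplication and conjugation, but it is where a careless argument could slip. Everything else reduces to the remark that a norm-attaining point provides a constant approximating sequence, which is what pins the floating $\UQNA$ target $y_0$ down to the concrete image $Tx_0$.
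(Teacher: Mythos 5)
Your main direction---showing $\NA(X,Y)\cap\UQNA(X,Y)\subseteq\RSE(X,Y)$---is correct and is essentially the paper's own proof: pick $x_0\in S_X$ with $\|Tx_0\|=\|T\|$, apply the $\UQNA$ property to the constant sequence $(x_0)$ (which is legitimately norm-approximating here, precisely because $T$ attains its norm at $x_0$) to obtain $Tx_0=\theta' y_0$ for some $\theta'\in\mathbb{T}$, and then rewrite any $\UQNA$ limit $\theta y_0$ as $\theta\overline{\theta'}\,Tx_0$. The scalar bookkeeping you flag as the delicate point is exactly the paper's $\theta\overline{\theta'}$ composition, and it is handled correctly.

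However, your justification of the easy inclusion contains a circular step. You assert that $\|Tx_0\|=\|T\|$ is ``a consequence of the $\RSE$ condition applied to the constant sequence $x_n=x_0$''. But the $\RSE$ definition only makes a claim about sequences satisfying the hypothesis $\|Tx_n\|\to\|T\|$, and the constant sequence $(x_0)$ satisfies that hypothesis if and only if $\|Tx_0\|=\|T\|$---which is precisely what you are trying to prove. (Note the asymmetry with the main direction: there, $\NA$ guarantees in advance that the constant sequence is norm-approximating, so the same trick is legitimate; here it is not.) The correct and equally short argument: if $T\neq 0$, choose by definition of the operator norm a sequence $(x_n)\subseteq B_X$ with $\|Tx_n\|\to\|T\|$; the $\RSE$ property yields a subsequence and $\theta\in\mathbb{T}$ with $Tx_{\sigma(n)}\to\theta Tx_0$, and taking norms gives $\|Tx_0\|=\lim_n\|Tx_{\sigma(n)}\|=\|T\|$, so $T$ attains its norm at $x_0$ (the case $T=0$ being trivial). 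With that one-line repair, $y_0:=Tx_0\in\|T\|S_Y$ and the rest of your easy direction goes through. The paper dismisses this inclusion as ``clear'' and does not spell it out, so the lapse occurs in a step the paper omits, but as written your parenthetical argument would not stand.
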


\begin{proof}
One way is clear. Let $T \in \NA(X,Y) \cap \UQNA(X,Y)$. Then, there exists $x_0 \in S_X$ such that $\|Tx_0\|=\|T\|$. Let $(x_n) \subseteq B_X$ be any subsequence satisfying $\|Tx_n\| \to \|T\|$. Since $T \in \UQNA(X,Y)$, there exists $y_0 \in \|T\|S_Y$, $\theta \in \mathbb{T}$, and a subsequence $(x_{\sigma(n)}) \subseteq (x_n)$ such that $Tx_{\sigma(n)} \to \theta y_0$. Similarly, for the constant sequence $(x_0) \subseteq B_X$, we have $Tx_0 \to \theta' y_0$ for some $\theta' \in \mathbb{T}$. This shows that $Tx_{\sigma(n)} \to \theta \overline{\theta'} Tx_0$, hence $T \in \RSE(X,Y)$.
\end{proof}

Let us remark that Example~\ref{example:beta+RNP-notimplyRSEdense}, which appears in the next section, will show that the denseness of both $\NA(X,Y)$ and $\UQNA(X,Y)$ does not guarantee the denseness of $\RSE(X,Y)$.

The next observation provides some sufficient conditions for the reverse implications of (i), (ii), and (v) in Figure~\ref{fig:relation} to hold. Item (a) below appeared already in \cite[Remark 1.5]{CCJM} and we include it for the sake of completeness.

\begin{rem}\label{rem:monomorf}
Let $X$, $Y$ be Banach spaces and let $T\in \ELL(X, Y)$ satisfy that $T(B_X)$ is closed. \begin{enumerate}
\itemsep0.25em
\item $T\in \QNA(X, Y)$ if and only if  $T\in \NA(X, Y)$; 
\item $T\in \UQNA(X, Y)$ if and only if $T\in \RSE(X, Y)$. 
\end{enumerate}
If, in addition, $T$ is a monomorphism, then 
\begin{enumerate}
\itemsep0.25em
\item[(c)] $T\in \UQNA(X, Y)$ if and only if $T\in \ASE (X, Y)$. 
\end{enumerate}
\end{rem} 

\begin{proof}
    Indeed, if $T\in \QNA(X, Y)$ then $T(B_X)\cap \|T\|S_Y=\overline{T(B_X)}\cap \|T\|S_Y\neq \emptyset$, so $T$ attains its norm. Moreover, if $T\in \UQNA(X, Y)$ then $$T\in \UQNA(X, Y)\cap \NA(X, Y)=\RSE(X, Y).$$ 
    For (c), suppose that $T\in \UQNA(X, Y)$ and let $(x_n)\subseteq S_X$ be a sequence such that $\|Tx_n\|\to \|T\|$. Therefore, there exist $y_0\in \|T\|S_Y$ and a subsequence  $(x_{\sigma(n)})\subseteq (x_n)$ and $\theta \in \mathbb{T}$ such that
    \begin{equation*}
        Tx_{\sigma(n)}\to \theta y_0.
    \end{equation*}
Since $T$ is a monomorphism, we have $x_{\sigma(n)} \rightarrow x_0 := \theta \, T^{-1} (y_0)$.
    \end{proof}

Another way to pass from being range strongly exposing to being absolutely strongly exposing is to attain the norm at a LUR point.
 
\begin{prop}\label{prop:RSE+aLUR}
Let $X$ and $Y$ be Banach spaces. If $T\in \RSE(X, Y)$ satisfies that $\|Tx_0\| = \|T\|$ for  $x_0 \in S_X$ and $x_0$ is strongly exposed by every functional attaining its norm on it (for instance, if $x_0$ is a LUR point), then $T\in \ASE(X, Y)$. 
\end{prop}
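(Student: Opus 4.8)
The plan is to prove $T\in\ASE(X,Y)$ using $x_0$ itself as the exposing point, by transporting the range-side convergence delivered by the $\RSE$ hypothesis back to the domain through a functional that strongly exposes $x_0$. Assuming, as we may, that $T\neq 0$ (so that $\|T\|>0$ and $Tx_0\neq 0$), I would first record that the ``strong maximum value'' of $T$ is a rotation of $Tx_0$. Let $w\in S_X$ be the point witnessing $T\in\RSE(X,Y)$. Applying the definition of $\RSE$ to the constant sequence $(x_0)$---which is legitimate since $\|Tx_0\|=\|T\|$---produces $\theta_0\in\mathbb{T}$ with $Tx_0=\theta_0\,Tw$. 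Consequently, for \emph{any} sequence $(x_n)\subseteq B_X$ with $\|Tx_n\|\to\|T\|$ there are a subsequence $(x_{\sigma(n)})$ and $\theta\in\mathbb{T}$ such that $Tx_{\sigma(n)}\to\theta\,Tx_0$, after absorbing $\theta_0$ into the unimodular scalar.

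Next I would construct the exposing functional on the domain. By the Hahn--Banach theorem, choose $y^*\in S_{Y^*}$ with $y^*(Tx_0)=\|Tx_0\|=\|T\|$, and set $x^*:=T^*y^*\in X^*$. Then $x^*(x_0)=y^*(Tx_0)=\|T\|$, while $\|x^*\|\le\|T^*\|\,\|y^*\|=\|T\|$; hence $\|x^*\|=\|T\|$ and $x^*$ attains its norm at $x_0$. By the standing hypothesis on $x_0$ (which holds, in particular, when $x_0$ is a LUR point, by the remark in Subsection~\ref{subsec:needed-definitions}), the functional $x^*$ therefore \emph{strongly exposes} $x_0$ in $B_X$.

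Finally I would combine the two steps. Given $(x_n)\subseteq B_X$ with $\|Tx_n\|\to\|T\|$, pass to the subsequence from the first step, so that $Tx_{\sigma(n)}\to\theta\,Tx_0$ for some $\theta\in\mathbb{T}$. Applying the continuous functional $y^*$ gives $x^*(\overline{\theta}\,x_{\sigma(n)})=\overline{\theta}\,y^*(Tx_{\sigma(n)})\to\overline{\theta}\,\theta\,\|T\|=\|T\|$, whence $\re x^*(\overline{\theta}\,x_{\sigma(n)})\to\|T\|=x^*(x_0)$. Since $(\overline{\theta}\,x_{\sigma(n)})\subseteq B_X$ and $x^*$ strongly exposes $x_0$, we conclude $\overline{\theta}\,x_{\sigma(n)}\to x_0$, that is, $x_{\sigma(n)}\to\theta\,x_0$. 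This is precisely the defining condition for $T\in\ASE(X,Y)$ with witness $x_0$.

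The only step that is not pure bookkeeping is the choice of the pulled-back functional $x^*=T^*y^*$ with $y^*$ norming $Tx_0$: this is the object that simultaneously attains its norm at $x_0$ (so that the hypothesis upgrades it to a strongly exposing functional) and converts the range-side convergence $Tx_{\sigma(n)}\to\theta\,Tx_0$ into the scalar norming condition to which the strong exposure of $x_0$ can be applied. Once this bridge is identified, the remainder is Hahn--Banach together with careful tracking of the unimodular scalar $\theta$.
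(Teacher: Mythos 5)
Your proof is correct and follows essentially the same route as the paper's: pull back a Hahn--Banach norming functional $y^*$ for $Tx_0$ to $x^* = T^*y^*$, observe that $x^*$ attains its norm at $x_0$ and hence strongly exposes it, and use this to upgrade the range convergence $Tx_{\sigma(n)} \to \theta\, Tx_0$ to domain convergence $x_{\sigma(n)} \to \theta x_0$. Your first paragraph merely makes explicit a step the paper leaves implicit, namely that the $\RSE$ limit point can be taken to be a rotation of $Tx_0$ itself rather than of the witnessing point.
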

   
\begin{proof}
For simplicity, we may assume that $\|T\|=1$. Suppose that $(x_n)\subseteq B_X$ is a sequence such that $\|Tx_n\|\to 1$. Then, there exists a subsequence $(x_{\sigma(n)})\subseteq (x_n)$ and $\theta\in \mathbb{T}$ such that $Tx_{\sigma(n)}\to \theta Tx_0$. Take $y_0^* \in S_{Y^*}$ so that $y_0^* (T x_0) = 1$. Then, $T^* y_0^* (x_0) =1$ and $T^* y_0^* \bigl( \overline{\theta} x_{\sigma(n)}\bigr) \to 1$. Since $x_0$ is strongly exposed by $T^* y_0^*$, we obtain that $( x_{\sigma(n)})$ converges to $\theta x_0$. 
\end{proof}

A useful method used in \cites{cgmr2020,JMR23} to obtain $\ASE$ operators is to consider bounded linear operators that attain their norm at strongly exposed points. Concretely, if $T \in \Lin(X, Y)$ attains its norm at $x_0 \in \str(B_X)$ and $\varepsilon > 0$, then there exists $S \in \ASE(X, Y)$ such that $\|Sx_0\| = \|S\|$ and $\|S - T\| < \varepsilon$ (see \cite[Proposition 3.14]{cgmr2020} and \cite[Lemma 1.2]{JMR23}). The following lemma follows a similar approach and will be useful in establishing the denseness of $\RSE(X, Y)$ and $\UQNA(X, Y)$ based on the denseness of $\NA(X, Y)$ and $\QNA(X, Y)$, respectively.

\begin{lem}\label{lem:strexp+density}
    Let $X$, $Y$ be Banach spaces and $T\in \ELL(X, Y)$.
    \begin{enumerate}
    \itemsep0.25em
        \item If $T\in \QNA(X, Y)$ towards $y_0\in \|T\|S_Y$ and $y_0 \in\str\bigl(\overline{T(B_X)}\bigr)$, then $T\in \overline{\UQNA(X, Y)}$. 
        \item If $T\in \NA(X, Y)$ and $\|Tx_0\|=\|T\|$ for some $x_0 \in S_X$ such that $Tx_0\in\str\bigl(\overline{T(B_X)}\bigr)$, then $T\in \overline{\RSE(X, Y)}$.
    \end{enumerate}
\end{lem}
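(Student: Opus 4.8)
The plan is to prove both items by a perturbation argument: starting from an operator that quasi-attains (resp.\ attains) its norm toward a strongly exposed point $y_0 = Tx_0$ of $\overline{T(B_X)}$, I would add a small rank-one correction built from the exposing functional of $y_0$ so that the perturbed operator is genuinely $\UQNA$ (resp.\ $\RSE$), while keeping the perturbation arbitrarily small in operator norm. Since the two items are structurally parallel and item (b) is just item (a) intersected with norm attainment (by Proposition~\ref{prop:una=na+uqna}), I expect to do the real work once. Let me focus on (a).

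Assume $\|T\|=1$ for simplicity. Let $y^*\in S_{Y^*}$ strongly expose $y_0$ in $\overline{T(B_X)}$, so $\re y^*(y_0)=\sup_{x\in B_X}\re y^*(Tx)=1$ and any sequence $(z_n)\subseteq \overline{T(B_X)}$ with $\re y^*(z_n)\to 1$ satisfies $z_n\to y_0$ in norm. Fix $\ep>0$. First I would define, for small $\lambda>0$, the perturbation
\[
S := T + \lambda\, y^*\otimes y_0, \qquad \text{i.e.}\quad Sx = Tx + \lambda\, y^*(Tx)\, y_0,
\]
so that $\|S-T\|\le \lambda$ and $Sx_0$ is a rotated multiple of $y_0$ along the direction that $y^*$ already maximizes. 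The key computation is that $\re y^*(Sx) = (1+\lambda)\re y^*(Tx)$, so $\|S\|\ge \re y^*(Sx_0)=(1+\lambda)$, and conversely one controls $\|S\|$ from above by $(1+\lambda)$; thus $\|S\|=1+\lambda$ and the supremum of $\re y^*(S\,\cdot)$ over $B_X$ is attained/approached exactly where $\re y^*(T\,\cdot)$ was. Now suppose $(x_n)\subseteq B_X$ satisfies $\|Sx_n\|\to\|S\|=1+\lambda$. The crucial step is to show this forces $\re y^*(Tx_n)\to 1$: indeed $\|Sx_n\|\ge \re y^*(Sx_n)=(1+\lambda)\re y^*(Tx_n)$ is not immediately enough, so instead I would bound $\|Sx_n\|\le \|Tx_n\| + \lambda|y^*(Tx_n)|\le 1+\lambda|y^*(Tx_n)|$, whence $|y^*(Tx_n)|\to 1$, and after passing to a subsequence and rotating by some $\theta\in\mathbb{T}$ one gets $\re y^*(\overline\theta Tx_n)\to 1$. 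Since $\overline\theta Tx_n\in\overline{T(B_X)}$, the strong-exposedness of $y_0$ gives $\overline\theta Tx_n\to y_0$, i.e.\ $Tx_n\to \theta y_0$, and therefore $Sx_n = Tx_n+\lambda y^*(Tx_n)y_0\to \theta y_0+\lambda\theta y_0=\theta(1+\lambda)y_0$. This exhibits $S\in\UQNA(X,Y)$ toward the point $(1+\lambda)y_0\in\|S\|S_Y$, and $\|S-T\|\le\lambda<\ep$ proves (a).

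For (b), I would run the identical perturbation but additionally invoke the hypothesis $\|Tx_0\|=1$ with $Tx_0\in\str(\overline{T(B_X)})$; note that a norm-attaining $T$ is in particular $\QNA$ toward $y_0=Tx_0$, so the construction of $S$ applies verbatim and yields $S\in\UQNA(X,Y)$. It remains to check $S\in\NA(X,Y)$: here $Sx_0 = Tx_0+\lambda y^*(Tx_0)y_0 = y_0+\lambda y_0 = (1+\lambda)y_0$ (using $y^*(Tx_0)=y^*(y_0)=1$), so $\|Sx_0\|=(1+\lambda)=\|S\|$ and $S$ attains its norm at $x_0$. By Proposition~\ref{prop:una=na+uqna}, $S\in\NA(X,Y)\cap\UQNA(X,Y)=\RSE(X,Y)$, and again $\|S-T\|\le\lambda<\ep$.

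The main obstacle I anticipate is handling the scalar field and rotations cleanly in the step $|y^*(Tx_n)|\to 1 \Rightarrow \re y^*(\overline\theta Tx_n)\to 1$: in the complex case one must pass to a subsequence along which the phases $\theta_n$ of $y^*(Tx_n)$ converge to a single $\theta\in\mathbb{T}$ before applying strong exposedness, and one must be careful that the rotation applied to $Tx_n$ is the same $\theta$ that appears in the limit $Sx_n\to\theta(1+\lambda)y_0$. A secondary technical point is verifying the upper bound $\|S\|\le 1+\lambda$ rigorously (it follows from $\|Sx\|\le\|Tx\|+\lambda\|Tx\|\,\|y_0\|\le(1+\lambda)\|x\|$ since $|y^*(Tx)|\le\|Tx\|\le\|x\|$), which is routine but needs to be stated so that the identification $\|S\|=1+\lambda$ is justified. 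Everything else is a direct reduction to the defining property of strong exposedness.
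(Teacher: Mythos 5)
Your construction is the paper's construction: the same rank-one perturbation $S=T+\lambda\,y^*(T\,\cdot\,)\,y_0$ built from the exposing functional, the same modulus-then-phase extraction to apply strong exposedness along a subsequence, and the same reduction of (b) to (a) by checking $\|Sx_0\|=\|S\|$ and invoking Proposition~\ref{prop:una=na+uqna}. So the route is identical to the paper's; however, there is one incorrect step in your write-up that needs repair.

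The flaw is the opening normalization: you take $y^*\in S_{Y^*}$ strongly exposing $y_0$ in $\overline{T(B_X)}$ and assert $\re y^*(y_0)=1$. A strongly exposing functional of the \emph{set} $\overline{T(B_X)}$ need not attain its norm at $y_0$ as a functional on $B_Y$: for instance, if $\overline{T(B_X)}$ is the segment $[-y_0,y_0]$ (take $Tx=x\,y_0$ on $X=\mathbb{R}$), then every $y^*\in S_{Y^*}$ with $y^*(y_0)>0$ strongly exposes $y_0$ there, including functionals with $y^*(y_0)=1/2$. Consequently your identities $\|S\|=1+\lambda$ and ``$|y^*(Tx_n)|\to 1$'' are unjustified, and in fact false whenever $c:=y^*(y_0)<1$: since $T(B_X)$ is balanced, $\sup\bigl\{|y^*(z)|\colon z\in\overline{T(B_X)}\bigr\}=\re y^*(y_0)=c$, so $|y^*(Tx)|\le c$ for all $x\in B_X$ (this computation also shows $y^*(y_0)$ is real, and $c>0$ because the constant sequence $(-y_0)\subseteq\overline{T(B_X)}$ would otherwise violate strong exposedness). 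The repair is mechanical: carry $c$ through. One gets $\|S\|=1+\lambda c$, with the lower bound coming from a quasi-attaining sequence $Tx_n\to y_0$ (note that in (a) there is no point $x_0$ available, so your appeal to $\re y^*(Sx_0)$ there is also out of place) and the upper bound from $|y^*(Tx)|\le c$; then $\|Su_n\|\to\|S\|$ forces $|y^*(Tu_n)|\to c$, and your phase argument applies verbatim to give $\overline{\theta_n}\,Tu_n\to y_0$, hence $S\in\UQNA(X,Y)$; in (b), $Sx_0=(1+\lambda c)y_0$ still has norm $\|S\|$, so the conclusion via Proposition~\ref{prop:una=na+uqna} stands. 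This corrected bookkeeping is precisely how the paper proceeds: it imposes only $\|y_0^*\|<1/\|T\|$ with no normalization of $y_0^*(y_0)$, records $\sup\bigl\{|y_0^*(y)|\colon y\in\overline{T(B_X)}\bigr\}=y_0^*(y_0)$ as \eqref{eq:sup}, and obtains $\|S\|=\|y_0\|+\varepsilon\,y_0^*(y_0)$ rather than your $1+\lambda$.
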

\begin{proof}
(a) Fix $\varepsilon>0$ and let $y_0^* \in {Y^*}$ be a strongly exposing functional of $\overline{T(B_X)}$ at $y_0$ such that $\|y_0^*\|<\frac{1}{\|T\|}$. Observe that 
    \begin{equation}\label{eq:sup}
        \sup \bigl\{ |y_0^*(y)|\colon y\in\overline{T(B_X)} \bigr\} = |y_0^* (y_0)| = y_0^*(y_0).
    \end{equation}
Now, we define $S\in \ELL(X, Y)$ as
    \begin{equation}\label{eq:def_S}
        Sx= Tx + \varepsilon y_0^*(Tx)\frac{y_0}{\|y_0\|}.
    \end{equation}
    Then, we have $\|S-T\|\leq\varepsilon\|y_0^*\|  \|T\|<\varepsilon$ and, applying \eqref{eq:sup}, $$\|Sx\|\leq \|T\|+ \varepsilon|y_0^*(Tx)|\leq\|y_0\|+\varepsilon y_0^*(y_0)$$ 
    for every $x\in B_X$, so $\|S\|\leq \|y_0\|+\varepsilon y_0^*(y_0)$. In fact, we will prove that $\|S\|=\|y_0\|+\varepsilon y_0^*(y_0)$. 
    Choose a sequence $(x_n)\subseteq B_X$ such that $Tx_n \to y_0$, and we have 
    \begin{equation*}
        \lim_n Sx_n = y_0+\varepsilon y_0^*(y_0) \frac{y_0}{\|y_0\|}=\left(1+\varepsilon\frac{y_0^*(y_0)}{\|y_0\|}\right)y_0 =: z_0.
    \end{equation*}
    Observe that $\|z_0\|= \|y_0\|+\varepsilon y_0^*(y_0)$; so $\|S\| =  \|y_0\|+\varepsilon y_0^*(y_0)$. 
    
    Finally, to prove that $S\in \UQNA(X, Y)$ take a sequence $(u_n)\subseteq B_X$ such that $\|Su_n\|\to \|S\|$. Then by \eqref{eq:sup} 
    \begin{equation*}
     \|S\| =\lim_n \left\|Tu_n+\varepsilon y_0^*(Tu_n)\frac{y_0}{\|y_0\|}\right\|  \leq \|T\|+\varepsilon y_0^* (y_0).
    \end{equation*} 
In particular, this implies that $\lim_n|y_0^*(Tu_n)|= y_0^*(y_0)$; so there exists a sequence $(\theta_n)\subseteq \mathbb{T}$ such that $\re y_0^*(T(\theta_n u_n))\to y_0^*(y_0)$. As $y_0^*$ strongly exposes $\overline{T(B_X)}$ at $y_0$, we have $T(\theta_n u_n)\to y_0$, and then $S(\theta_n u_n)\to z_0$, as we wanted.
    
    (b) If $T\in \NA(X, Y)$ then, in particular, $T\in \QNA(X, Y)$. So, we can follow the proof of (a) and consider $S\in \UQNA(X, Y)$ as in \eqref{eq:def_S} with $y_0=Tx_0$. 
    Note that $\|S-T\|<\varepsilon$, and, in this case, $\|Sx_0\|=\|S\|$. 
    Thus, $S\in \NA(X, Y)$; therefore $S\in \RSE(X, Y)$ thanks to Proposition \ref{prop:una=na+uqna}.
\end{proof}

\begin{remark}\label{remark:comment-to-lemma26}
Let us comment that in the proof of the lemma above, in both cases we construct operators $S\in \RSE(X,Y)$ which are arbitrarily close to the given operator $T$, such that $T-S$ is of rank-one and satisfying that $\overline{S(B_X)}\subseteq \beta \overline{T(B_X)}$ for some $\beta>0$.
\end{remark}

As a direct consequence of Lemma~\ref{lem:strexp+density}, we have the following corollary:

\begin{corollary}\label{cor:lur}
Let $X$ and $Y$ be Banach spaces. Suppose that $S_Y = \str(B_Y)$ (in particular, if $Y$ is LUR). Then 
\begin{enumerate}
\itemsep0.25em
\item $\NA(X, Y) \subseteq \overline{\RSE(X, Y)}$;
\item $\QNA(X, Y)\subseteq \overline{\UQNA(X, Y)}$. 
\end{enumerate}
\end{corollary}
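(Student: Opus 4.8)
The plan is to deduce both inclusions directly from Lemma~\ref{lem:strexp+density} by checking, in each case, that the relevant norm-attainment point is a strongly exposed point of $\overline{T(B_X)}$. The entire force of the hypothesis $S_Y=\str(B_Y)$ is to make this automatic, because $\overline{T(B_X)}$ sits inside the ball $\|T\|B_Y$ whose whole sphere consists of strongly exposed points.

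The one elementary fact I would isolate first is a monotonicity property of strong exposure: if $C\subseteq D$ are bounded subsets of $Y$, if $p\in C$, and if some $y^*\in Y^*$ strongly exposes $p$ in $D$, then the same $y^*$ strongly exposes $p$ in $C$. Indeed, from $p\in C\subseteq D$ one gets
\[
\re y^*(p)\le \sup_{y\in C}\re y^*(y)\le \sup_{y\in D}\re y^*(y)=\re y^*(p),
\]
so all the inequalities are equalities and $p$ attains the supremum of $\re y^*$ over $C$; moreover any sequence in $C$ along which $\re y^*$ converges to $y^*(p)$ is in particular a sequence in $D$ with the same property, hence converges in norm to $p$ by strong exposure in $D$.

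Next I would record that (assuming $T\neq 0$, the case $T=0$ being trivial) the point $\|T\|q$ is strongly exposed in $\|T\|B_Y$ whenever $q\in\str(B_Y)$, which is immediate by rescaling the exposing functional; thus the hypothesis $S_Y=\str(B_Y)$ yields $\|T\|S_Y=\str(\|T\|B_Y)$. Since $\overline{T(B_X)}\subseteq \|T\|B_Y$ always holds, the monotonicity fact shows that every point of $\overline{T(B_X)}\cap \|T\|S_Y$ is a strongly exposed point of $\overline{T(B_X)}$.

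With this in hand both items are immediate. For (a), if $T\in\NA(X,Y)$ attains its norm at $x_0\in S_X$, then $Tx_0\in \overline{T(B_X)}\cap\|T\|S_Y\subseteq\str(\overline{T(B_X)})$, and Lemma~\ref{lem:strexp+density}(b) gives $T\in\overline{\RSE(X,Y)}$. For (b), if $T\in\QNA(X,Y)$ towards some $y_0\in\|T\|S_Y$, then likewise $y_0\in \overline{T(B_X)}\cap\|T\|S_Y\subseteq\str(\overline{T(B_X)})$, and Lemma~\ref{lem:strexp+density}(a) gives $T\in\overline{\UQNA(X,Y)}$. The LUR case is subsumed, since every LUR point of $S_Y$ is strongly exposed by any norm-one functional attaining its norm there (such a functional exists by Hahn--Banach), whence $S_Y=\str(B_Y)$. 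I do not anticipate a genuine obstacle: the only points requiring care are the trivial normalization $T\neq 0$ and stating the monotonicity-of-strong-exposure step cleanly, as that single observation is what carries the whole argument.
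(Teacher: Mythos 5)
Your proof is correct and follows exactly the route the paper intends: the corollary is stated there as a direct consequence of Lemma~\ref{lem:strexp+density}, and your monotonicity-of-strong-exposure observation (transferring strong exposure from $\|T\|B_Y$ down to $\overline{T(B_X)}$) is precisely the implicit step that makes it direct, together with the correct handling of the normalization $T\neq 0$ and the LUR case.
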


The next result complements the relationship between UQNA and RSE operators with the strongly exposed points of the (closure) of the image of the unit ball. Notice the parallelism with \cite[Lemma~1.2]{JMR23}.

\begin{lem}\label{lem:strexp}
    Let $X$ and $Y$ be Banach spaces. Then
    \begin{enumerate}
    \itemsep0.25em
        \item If $T\in \UQNA(X, Y)$ towards $y_0\in \|T\|S_Y$ then $y_0\in \str\bigl(\overline{T(B_X)}\bigr)$ and it is strongly exposed by any $y^*_0\in S_{Y^*}$ such that $\re y_0^*(y_0)=\|y_0\|$.
        \item If $T\in \RSE(X, Y)$ and $x_0\in S_X$ satisfies $\|Tx_0\|=\|T\|$, then $Tx_0\in \str\bigl(\overline{T(B_X)}\bigr)$ and it is strongly exposed by any $y_0^*\in S_{Y^*}$ such that $\re y_0^*(Tx_0)=\|T\|$.
    \end{enumerate}
\end{lem}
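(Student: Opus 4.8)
The plan is to derive part (b) from part (a) and to prove part (a) directly from the definition of a strongly exposed point. For (b), recall that by Proposition~\ref{prop:una=na+uqna} we have $T\in\UQNA(X,Y)$. If $x_1\in S_X$ is an $\RSE$ witness for $T$, then applying the $\RSE$ condition to the constant sequence $(x_0)$ (which trivially satisfies $\|Tx_0\|\to\|T\|$) produces $\theta_0\in\mathbb{T}$ with $Tx_0=\theta_0 Tx_1$; substituting $Tx_1=\overline{\theta_0}Tx_0$ into the $\RSE$ conclusion for an arbitrary approximating sequence shows that $T$ is in fact $\UQNA$ towards $y_0:=Tx_0\in\|T\|S_Y$. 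Part (a), applied with this $y_0$, then gives precisely the statement of (b). Hence all the content lies in part (a), where I may assume $T\neq 0$ (the case $T=0$ is degenerate) and, after rescaling, that $\|T\|=1$.

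For part (a), fix $y_0^*\in S_{Y^*}$ with $\re y_0^*(y_0)=\|y_0\|=1$. The first step is to check that $y_0^*$ attains its supremum over $\overline{T(B_X)}$ exactly at $y_0$. The upper bound $\re y_0^*(y)\le\|y\|\le 1$ holds for every $y\in\overline{T(B_X)}$, so it only remains to see that $y_0$ itself lies in $\overline{T(B_X)}$: choosing any $(x_n)\subseteq B_X$ with $\|Tx_n\|\to 1$ and invoking the $\UQNA$ hypothesis yields a subsequence and $\theta\in\mathbb{T}$ with $Tx_{\sigma(n)}\to\theta y_0$, so that $\overline{\theta}x_{\sigma(n)}\in B_X$ and $T(\overline{\theta}x_{\sigma(n)})\to y_0$. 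Thus $y_0\in\overline{T(B_X)}$ and $\sup\{\re y_0^*(y)\colon y\in\overline{T(B_X)}\}=\re y_0^*(y_0)=1$.

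The substance of the proof, and the step I expect to be the main obstacle, is upgrading this into strong exposure: the $\UQNA$ hypothesis only delivers subsequential convergence up to a unimodular rotation, whereas strong exposure demands that the whole approximating sequence converge to $y_0$ \emph{itself}. Given $(y_n)\subseteq\overline{T(B_X)}$ with $\re y_0^*(y_n)\to 1$, I would argue by contradiction: if $y_n\not\to y_0$, pass to a subsequence bounded away from $y_0$, approximate each term by some $Tx_n$ with $\|Tx_n-y_n\|<1/n$, and observe that $\re y_0^*(Tx_n)\to 1$ forces $\|Tx_n\|\to 1$ via the squeeze $\re y_0^*(Tx_n)\le\|Tx_n\|\le 1$. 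Applying $\UQNA$ gives a further subsequence with $Tx_{\sigma(n)}\to\theta' y_0$ for some $\theta'\in\mathbb{T}$, and the crucial point is that $\re y_0^*(Tx_{\sigma(n)})\to \re(\theta')\|y_0\|=\re\theta'$ must equal $1$; since $|\theta'|=1$ this pins down $\theta'=1$. Hence $Tx_{\sigma(n)}\to y_0$ and the corresponding $y_{\sigma(n)}\to y_0$, contradicting the separation and proving $y_n\to y_0$. This use of the functional value to force the rotation to be exactly $1$ is the heart of the argument; everything else is routine.
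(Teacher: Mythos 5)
Your proof is correct and follows essentially the same route as the paper's: the same squeeze $\re y_0^*(Tx_n)\le\|Tx_n\|\le\|T\|$ to produce a norming sequence, and the same computation pinning the rotation $\theta$ to $1$ via the functional value, with your contradiction/subsequence framing logically equivalent to the paper's ``every subsequence has a further subsequence converging to $y_0$'' step (and your explicit verification that $y_0\in\overline{T(B_X)}$ makes precise a point the paper leaves implicit). The only cosmetic difference is in part (b), which the paper proves by rerunning the argument of (a) with $\sup\{\re y_0^*(Tx)\colon x\in B_X\}=\|T\|$, whereas you reduce (b) to (a) by the constant-sequence trick (already used in Proposition~\ref{prop:una=na+uqna}) showing that an $\RSE$ operator is $\UQNA$ towards $Tx_0$.
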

\begin{proof} (a) Let $y_0^*\in S_{Y^*}$ such that $\re y_0^*(y_0)=\|y_0\|=\|T\|$. Observe that we have
\begin{equation*}
    \|T\|=\re y_0^*(y_0) \leq \sup \bigl\{\re y_0^*(z) \colon z\in \overline{T(B_X)}\bigr\}\leq \|T\|
\end{equation*}
and, therefore, $$\sup \bigl\{\re y_0^*(z) \colon z\in \overline{T(B_X)}\bigr\}=\|T\|.$$  Now, suppose that $(y_n)\subseteq \overline{T(B_X)}$ is a sequence such that 
\begin{equation*}
    \lim_n \re y_0^*(y_n) =\sup \{\re y_0^*(z) \colon z\in \overline{T(B_X)}\}=\|T\|.
\end{equation*}
Find $x_n \in B_X$ for each $n \in \N$ so that $\| y_n- Tx_n\| \rightarrow 0$ as $n \rightarrow \infty$; thus we have 
\begin{equation}\label{eq:lim}
    \lim_n \re y_0^*(Tx_n) =\|T\|
\end{equation}
which, in particular, implies that $\|Tx_n\| \rightarrow \|T\|$ as $n \rightarrow \infty$. 
Thus, there exists a subsequence $(x_{\sigma(n)})\subseteq (x_n)$ and $\theta\in \mathbb{T}$ such that $Tx_{\sigma(n)}\to \theta y_0$. Applying \eqref{eq:lim}, we have that
\begin{equation*}
    \|T\|=\lim_n \re y_0^*(Tx_{\sigma(n)})=  \re \theta \, y_0^*(y_0) = \re \theta \, \|T\| 
\end{equation*}
so $\theta$ must be $1$. In fact, every subsequence of $(Tx_n)$ must contain a further subsequence converging to $y_0$, so we finally have $\|Tx_n - y_0\| \rightarrow 0$; hence $\|y_n - y_0\|\rightarrow 0$ as $n \rightarrow \infty$.

(b) The proof is almost identical with (a), by applying $\sup \{\re y_0^*(Tx) \colon x \in B_X\} = \|T\|$.
\end{proof}

For the next few results, let $T\in \ELL(X, Y)$ and consider the commutative diagram in Figure \ref{fig:canonical_factorization}, where the operator $q\colon X \to X /\ker T$ denotes the canonical quotient map sending $x \in X$ to $[x] := x + \ker T \in X/ \ker T$ and $\tilde{T}$ is the injectivization of $T$.
\begin{figure}[ht!]
\centering
\[
 \begin{tikzcd}
		X \arrow[rd,"q"'] \arrow[rr,"T"] &  & Y \\
		& X/\ker T \arrow[ru,"\tilde{T}"'] & 
	\end{tikzcd}
\]
\caption{Canonical factorization of $T\in\Lin(X,Y)$}
\label{fig:canonical_factorization}
\end{figure}

Let us mention that $T \in \UQNA(X,Y)$ if and only if $\tilde{T} \in \UQNA(X/\ker T, Y)$, a fact that which can be deduced from the argument in \cite[Lemma 5.6]{CCJM}. Moreover, if $T$ has  closed range (recall that $T$ has a closed range if and only if $\tilde{T}$ is a monomorphism), then $\tilde{T} \in \UQNA(X/\ker T, Y)$ is equivalent to $\tilde{T} \in \ASE(X / \ker T, Y)$ by Remark \ref{rem:monomorf}. Summarizing, we have the following: 

\begin{remark}\label{remark:factorization_UQNA}
    Given $T \in \Lin (X,Y)$, we have 
    \begin{equation*}
        T \in \UQNA(X,Y) \iff \tilde{T} \in \UQNA (X / \ker T, Y) \,  \xLeftarrow{(\diamondsuit)} \, \tilde{T} \in \ASE (X / \ker T, Y)
    \end{equation*}
    and the implication $(\diamondsuit)$ can be reversed when $T$ has closed range. 
\end{remark}

Two related important observations in the same line are the following.

\begin{prop}\label{prop:quotient_implication2}
Given $T \in \Lin (X,Y)$, $R\in \Lin(X/\ker T,Y)$, if $R \in \ASE(X/\ker T, Y)$  with  $\|R(q(x_0))\| = \|R\|$ for some $x_0 \in B_X$, then $R \circ q \in \RSE(X, Y)$. 
\end{prop}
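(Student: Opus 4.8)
The plan is to verify the two conditions whose conjunction characterizes membership in $\RSE$ by Proposition~\ref{prop:una=na+uqna}, namely that $R\circ q$ is norm-attaining and uniquely quasi norm-attaining. First I would record the elementary but essential fact that $\|R\circ q\| = \|R\|$: since $q$ is the canonical quotient map we have $q(B_X)\subseteq B_{X/\ker T}$ with $\overline{q(B_X)} = B_{X/\ker T}$, so the supremum of $\|(R\circ q)(x)\|$ over $x\in B_X$ equals the supremum of $\|Rz\|$ over $z\in B_{X/\ker T}$. In particular, the hypothesis $\|R(q(x_0))\| = \|R\|$ with $x_0\in B_X$ says precisely that $R\circ q$ attains its norm at $x_0$, giving $R\circ q\in\NA(X,Y)$ at once (and incidentally forcing $\|q(x_0)\|=\|x_0\|=1$).

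For the $\UQNA$ part I would unpack the definition of $R\in\ASE(X/\ker T,Y)$. Let $z_0\in S_{X/\ker T}$ be the (rotation-unique) element witnessing that $R$ is absolutely strongly exposing. Testing the defining property against any sequence in $B_{X/\ker T}$ whose images approximate $\|R\|$ shows that $R$ itself attains its norm at $z_0$, so the vector $y_0 := R(z_0)$ satisfies $y_0\in\|R\|\,S_Y = \|R\circ q\|\,S_Y$; this $y_0$ will serve as the fixed target for $\UQNA$.

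Now I would take an arbitrary sequence $(x_n)\subseteq B_X$ with $\|(R\circ q)(x_n)\|\to\|R\circ q\|=\|R\|$. Since $q(x_n)\in B_{X/\ker T}$ and $\|R(q(x_n))\|\to\|R\|$, the $\ASE$ property of $R$ produces a subsequence $(x_{\sigma(n)})$ and $\theta\in\mathbb{T}$ with $q(x_{\sigma(n)})\to\theta z_0$ in $X/\ker T$. Applying the continuous operator $R$ yields $(R\circ q)(x_{\sigma(n)}) = R(q(x_{\sigma(n)}))\to\theta R(z_0) = \theta y_0$, which is exactly the condition for $R\circ q\in\UQNA(X,Y)$ towards $y_0$. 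Combining this with the norm-attainment established above and invoking Proposition~\ref{prop:una=na+uqna} gives $R\circ q\in\NA(X,Y)\cap\UQNA(X,Y) = \RSE(X,Y)$, as desired.

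As for difficulty, there is no genuine obstacle here: the argument is a clean transport of the $\ASE$ structure on $X/\ker T$ through the quotient map $q$, and the only points requiring care are the norm identity $\|R\circ q\|=\|R\|$ and the observation that the single $\ASE$-witness $z_0$ simultaneously supplies the norm-attaining point (via $x_0$, with $q(x_0)$ a unimodular multiple of $z_0$) and the unique $\UQNA$-limit $y_0 = R(z_0)$.
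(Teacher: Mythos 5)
Your proof is correct and follows essentially the same route as the paper's: the core step in both is to feed the sequence $(q(x_n))\subseteq B_{X/\ker T}$ into the $\ASE$ property of $R$ and push the resulting convergent subsequence forward through the continuous operator $R$. The only cosmetic difference is that the paper verifies the $\RSE$ definition directly at $x_0$ (implicitly using that the $\ASE$ witness must be a unimodular rotation of $q(x_0)$), whereas you package the same computation as $\NA$ plus $\UQNA$ and invoke Proposition~\ref{prop:una=na+uqna}.
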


\begin{proof}
If $(x_n)\subset B_X$ is such that $\|R(q(x_n))\|\to \|R\|$, then passing to a subsequence, $q(x_n)\to \theta q(x_0)$ for some $\theta \in \mathbb{T}$, and therefore $R(q(x_n))\to \theta R(q(x_0))$. This shows that $R\circ q\in \RSE(X, Y)$ at $x_0$.    
\end{proof}

The implication in Proposition \ref{prop:quotient_implication2} can be reversed for $R=\tilde{T}$ when $T$ has closed range. In fact, if $T = \tilde{T} \circ q \in \RSE(X,Y)$ then, in particular, $T \in \UQNA(X,Y)$. This is equivalent to $\tilde{T} \in \ASE(X/\ker T ,Y)$ as we observed in Remark \ref{remark:factorization_UQNA}. 

\begin{remark}\label{remark:proximinal_updated_remark}
    Notice that when $R \in \ASE(X/\ker T, Y) \text{ with } \|R(q(x_0))\| = \|R\|$ and $q(x_0) \in B_{X / \ker T}$, the point $x_0$ can be chosen from the closed unit ball $B_X$ in the following situations:
\begin{enumerate}
    \item  when $\ker T$ is proximinal, i.e., $q(B_X)=B_{X/\ker T}$. 
    \item  when $(\ker T)^\perp \subseteq \NA(X)$.
\end{enumerate}
In fact, the case (a) is somewhat obvious. As for the case (b), note that $\|R^*\| = \|R^* (y_0^*)\|$ for some $y_0^* \in B_{Y^*}$. Moreover, $x_0^* := (R\circ q)^* (y_0^*) = q^* (R^* (y_0^*))$ vanishes on $\ker T$. By hypothesis, this implies that $x_0^* \in \NA(X)$; so $|x_0^* (x_0)| = \|x_0^*\|$ for some $x_0 \in B_X$. It follows that $\|R\| = \| R(q(x_0))\|$.
\end{remark}

From the discussion above, we can deduce the following results regarding approximation by $\RSE$ operators. 

\begin{prop} \label{prop:proximinal+RNP}
Let $X$ and $Y$ be Banach spaces, and let $\mathcal{I}$ be an operator ideal. If $T \in \mathcal{I} (X,Y)$ satisfies one of the following conditions: 
\begin{enumerate}
\itemsep0.25em
    \item $\ker T$ is proximinal and $X / \ker T$ has the RNP;
    \item $(\ker T)^\perp \subseteq \NA(X)$,
\end{enumerate}
then $T \in \overline{\mathcal{I} (X,Y) \cap \RSE(X,Y)}$. 
\end{prop}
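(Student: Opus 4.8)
The plan is to factor $T=\tilde T\circ q$ as in Figure~\ref{fig:canonical_factorization}, carry out the approximation on the quotient $X/\ker T$, and then lift it back to $X$ through Proposition~\ref{prop:quotient_implication2} and Remark~\ref{remark:proximinal_updated_remark}. The observation that unifies the two cases is that \emph{$X/\ker T$ has the \textup{RNP} under either hypothesis}. In case (a) this is assumed. In case (b), recall that $q^*$ is an isometry from $(X/\ker T)^*$ onto $(\ker T)^\perp$; so for $z^*\in S_{(X/\ker T)^*}$ the functional $q^*z^*\in(\ker T)^\perp$ has norm one and, by hypothesis, attains it at some $x\in S_X$. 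Then $|z^*(qx)|=1=\|z^*\|$ forces $\|qx\|=1$, so $z^*$ attains its norm on $B_{X/\ker T}$. Hence every functional on $X/\ker T$ is norm-attaining and James' theorem yields that $X/\ker T$ is reflexive; in particular it has the RNP. (If $T=0$ then $T\in\RSE(X,Y)$ trivially, so I assume $T\neq 0$ and $\|q\|=1$.)

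With the RNP of $X/\ker T$ in hand I would invoke Bourgain's theorem \cite{Bourgain77} to approximate $\tilde T$ by operators in $\ASE(X/\ker T,Y)$. The crucial refinement—and the main obstacle—is that the approximant $R\in\ASE(X/\ker T,Y)$ must be chosen so that $R\circ q$ remains in $\mathcal I$. The decisive point is that it suffices to arrange $R-\tilde T$ to be of \emph{finite rank}: then $R\circ q-T=(R-\tilde T)\circ q\in\FR(X,Y)\subseteq\mathcal I(X,Y)$, and since $T\in\mathcal I(X,Y)$ and $\mathcal I$ is closed under addition, we get $R\circ q\in\mathcal I(X,Y)$. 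To produce such an $R$, I would use the RNP of $B_{X/\ker T}$ (density of its strongly exposing functionals) to locate a strongly exposed point $z_0\in\str(B_{X/\ker T})$ with $\|\tilde T z_0\|$ arbitrarily close to $\|\tilde T\|$, perturb $\tilde T$ by a rank-one operator to make it attain its norm at $z_0$, and finally upgrade to an $\ASE$ operator by the rank-one construction recorded in Lemma~\ref{lem:strexp+density} and Remark~\ref{remark:comment-to-lemma26} (compare \cite{cgmr2020,JMR23}). The net perturbation $R-\tilde T$ is then of rank at most two.

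Once $R\in\ASE(X/\ker T,Y)$ with $R-\tilde T$ of finite rank and $\|R-\tilde T\|<\varepsilon$ is obtained, $R$ attains its norm, say $\|R(q(x_0))\|=\|R\|$ with $q(x_0)\in B_{X/\ker T}$. Here the two hypotheses re-enter through Remark~\ref{remark:proximinal_updated_remark}: in case (a) the proximinality of $\ker T$, and in case (b) the inclusion $(\ker T)^\perp\subseteq\NA(X)$, each let the preimage $x_0$ be taken in $B_X$. Proposition~\ref{prop:quotient_implication2} then gives $R\circ q\in\RSE(X,Y)$, while $\|R\circ q-T\|=\|(R-\tilde T)\circ q\|\le\|R-\tilde T\|\,\|q\|<\varepsilon$. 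As $R\circ q\in\mathcal I(X,Y)$ by the previous paragraph, letting $\varepsilon\to 0$ gives $T\in\overline{\mathcal I(X,Y)\cap\RSE(X,Y)}$.

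I expect the delicate point to be exactly the finite-rank control in the Bourgain step: the plain density of $\ASE(X/\ker T,Y)$ produces an approximant with no a priori relation to $\mathcal I$, so the argument genuinely depends on realizing the approximation by finite-rank (indeed rank-one) perturbations, which in turn forces one to first exhibit a nearby strongly exposed point of $B_{X/\ker T}$ at which $\tilde T$ is almost norming. Everything else is either the clean RNP reduction (with James' theorem doing the work in case (b)) or a direct citation of Proposition~\ref{prop:quotient_implication2} and Remark~\ref{remark:proximinal_updated_remark} for the lifting.
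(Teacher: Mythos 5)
Your proposal is correct and follows essentially the same route as the paper's proof: factor $T=\tilde T\circ q$, note that $X/\ker T$ has the RNP in both cases (the paper likewise disposes of case (b) by observing that $(\ker T)^\perp\subseteq\NA(X)$ forces $X/\ker T$ to be reflexive, i.e.\ your James-theorem argument), take $R\in\ASE(X/\ker T,Y)$ with $R-\tilde T$ of finite rank so that $R\circ q=(R-\tilde T)\circ q+T\in\mathcal I(X,Y)$, and lift via Remark~\ref{remark:proximinal_updated_remark} and Proposition~\ref{prop:quotient_implication2}. The one point you elaborate — that the Bourgain-type $\ASE$ approximant can be realized as a finite-rank (rank-one) perturbation — is exactly what the paper asserts without proof, and your reconstruction of it via strongly exposed points and the machinery of \cite{cgmr2020}/\cite{JMR23} is a legitimate filling-in rather than a different method.
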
 

\begin{proof}
(a): Fix $\varepsilon>0$. Since $X/\ker T$ has the RNP, there is $R \in \ASE (X/\ker T, Y)$ so that $R \in \Lin (X/\ker T, Y)$, $Q:=R-\tilde{T}$ is of finite-rank, and $\|R - \tilde{T} \| < \eps$, where $T = \tilde{T} \circ q$. Finally, notice from Remark \ref{remark:proximinal_updated_remark} that the proximinality of $\ker T$ yields that $R\circ q \in  \RSE(X,Y) $ with $\| R \circ q - T \| < \varepsilon$. Finally, note that $R\circ q = Q \circ q + T \in \mathcal{I}(X,Y)$. 

(b): Observe that $(\ker T)^\perp \subseteq \NA(X)$ implies that $X / \ker T$ is reflexive. Thus, as in (a), we can find $R \in  \ASE (X/\ker T, Y) $ so that $\|R - \tilde{T} \| \approx 0$. Finally, we deduce from the item (b) of Remark \ref{remark:proximinal_updated_remark} that $R\circ q \in \mathcal{I}(X,Y) \cap \RSE(X,Y)$ and $\|R\circ q - T \| \approx 0$.
\end{proof} 

In particular, item (a) of Proposition \ref{prop:proximinal+RNP} shows that if $T \in \Lin (X,Y)$ has closed range, $\ker T$ is proximinal, and $Y$ has the RNP, then $T \in \overline{\RSE(X,Y)}$. We note that item (b) improves \cite[Proposition 3.15]{KLMW}, which proves that $T \in \overline{\NA(X,Y)}$ under the same assumption.

We can also take advantage of the observations above to obtain a result similar to Lemma \ref{lem:strexp+density}.

\begin{prop}\label{prop:strexp-quotient} 
Let $X$ and $Y$ be Banach spaces, and let $\mathcal{I}$ be an operator ideal. If $T \in \mathcal{I} (X,Y)$ attains its norm at some $x_0 \in S_X$ with $q (x_0) \in \str (B_{X/\ker T})$, then $T \in \overline{\mathcal{I}(X,Y) \cap \RSE(X,Y)}$. 
\end{prop}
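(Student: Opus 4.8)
The plan is to push the problem down to the quotient space $X/\ker T$ via the injectivization $\tilde{T}$ of Figure~\ref{fig:canonical_factorization} (so $T=\tilde{T}\circ q$), where the hypothesis $q(x_0)\in\str(B_{X/\ker T})$ lets me apply the strongly-exposing perturbation technique, and then to transport the resulting $\ASE$ operator back through $q$ using Proposition~\ref{prop:quotient_implication2}. The case $T=0$ is trivial since then $T\in\RSE(X,Y)$ already, so I assume $T\neq 0$.

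First I would record that $q(x_0)$ is a norm-one point at which $\tilde{T}$ attains its norm. Indeed, from $T=\tilde{T}\circ q$ with $\|\tilde{T}\|=\|T\|$ and $\|q\|=1$, the chain $\|T\|=\|Tx_0\|=\|\tilde{T}(q(x_0))\|\leq \|\tilde{T}\|\,\|q(x_0)\|\leq \|T\|$ forces $\|q(x_0)\|=1$ and $\|\tilde{T}(q(x_0))\|=\|\tilde{T}\|$. Hence $q(x_0)$ is a strongly exposed point of $B_{X/\ker T}$ of norm one at which $\tilde{T}$ attains its norm. Now I would invoke the perturbation result recalled just before Lemma~\ref{lem:strexp+density} (namely \cite[Proposition 3.14]{cgmr2020} and \cite[Lemma 1.2]{JMR23}), applied to $\tilde{T}\in\Lin(X/\ker T,Y)$: given $\varepsilon>0$, this yields $R\in\ASE(X/\ker T,Y)$ with $\|R(q(x_0))\|=\|R\|$, $\|R-\tilde{T}\|<\varepsilon$, and such that $Q:=R-\tilde{T}$ is of finite rank (this finite-rank feature is precisely the one already used in the proof of Proposition~\ref{prop:proximinal+RNP}).

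Finally I would apply Proposition~\ref{prop:quotient_implication2} to this $R$: since $R\in\ASE(X/\ker T,Y)$ with $\|R(q(x_0))\|=\|R\|$ and $x_0\in B_X$, we obtain $R\circ q\in\RSE(X,Y)$. It then remains to verify the two bookkeeping points. For the norm estimate, $\|R\circ q-T\|=\|(R-\tilde{T})\circ q\|\leq\|R-\tilde{T}\|<\varepsilon$. For membership in the ideal, write $R\circ q=\tilde{T}\circ q+Q\circ q=T+Q\circ q$; as $T\in\mathcal{I}(X,Y)$ and $Q\circ q$ is finite-rank (hence lies in every operator ideal), we get $R\circ q\in\mathcal{I}(X,Y)\cap\RSE(X,Y)$ with $\|R\circ q-T\|<\varepsilon$. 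Letting $\varepsilon\to 0$ then gives $T\in\overline{\mathcal{I}(X,Y)\cap\RSE(X,Y)}$.

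I do not expect a genuinely hard step here: once the canonical factorization is in place, the entire content lies in correctly carrying the strongly-exposing perturbation across the quotient map. The only points deserving care are the verification that $q(x_0)$ is truly a norm-one strongly exposed point at which $\tilde{T}$ attains its norm—so that the $\ASE$ perturbation lemma is applicable on $X/\ker T$—and the observation that the perturbation can be taken finite-rank, which is what keeps the approximant $R\circ q$ inside the ideal $\mathcal{I}$.
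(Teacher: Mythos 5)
Your proof is correct and follows essentially the same route as the paper's: factor $T=\tilde{T}\circ q$, apply the finite-rank $\ASE$ perturbation of \cite[Lemma 1.2]{JMR23} to $\tilde{T}$ at the strongly exposed point $q(x_0)$, and pull the resulting operator back through Proposition~\ref{prop:quotient_implication2}. The additional verifications you spell out (that $\|q(x_0)\|=1$ and $\tilde{T}$ attains its norm there, and that the finite-rank difference $Q\circ q$ keeps $R\circ q$ inside $\mathcal{I}$) are precisely the steps the paper leaves implicit, so nothing is missing.
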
 

\begin{proof} Let $\tilde{T} \in \Lin (X/\ker T, Y)$ be the operator induced by $T \in \mathcal{I}(X,Y)$. Since $\tilde{T}$ attains its norm at $q(x_0)$ which is a strongly exposed point, by \cite[Lemma 1.2]{JMR23}, there exists $R \in \ASE(X/\ker T, Y)$ such that $\| R (q(x_0))\| = \| R \|$, $Q:= R-\tilde{T}$ is of finite-rank, and $\|R - \tilde{T} \| < \eps$. So, $R\circ q \in \mathcal{I}(X,Y)$ and $\|R\circ q - T \| < \varepsilon$. Notice from Proposition \ref{prop:quotient_implication2} that $R \circ q \in \RSE(X,Y)$.
\end{proof}

As a consequence, we obtain the following result on finite-rank operators.

\begin{cor}\label{cor:strexp-quotient2} Let $X$ and $Y$ be Banach spaces, and $T \in \FR(X,Y) \cap \NA(X,Y)$. If $X/\ker T$ is strictly convex, then $T \in \overline{\FR(X,Y)\cap\RSE(X,Y)}$. 
Therefore, if $X^*$ is smooth, then 
\begin{equation*}
\FR(X,Y) \cap \NA(X,Y) \subseteq \overline{\FR(X,Y) \cap \RSE(X,Y)}. 
\end{equation*}
\end{cor}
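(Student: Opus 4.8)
The plan is to reduce both assertions to Proposition~\ref{prop:strexp-quotient} applied with the operator ideal $\mathcal{I} = \FR$. Recall that the latter asserts that if a finite-rank operator $T$ attains its norm at some $x_0 \in S_X$ and the class $q(x_0)$ is a strongly exposed point of $B_{X/\ker T}$, then $T \in \overline{\FR(X,Y) \cap \RSE(X,Y)}$. Thus, for the first statement, the only thing I need to verify is that the point of norm-attainment projects to a strongly exposed point of the quotient ball.

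First I would record that, since $T$ is finite-rank, the quotient $X/\ker T$ is finite-dimensional (its dimension equals the rank of $T$). Assuming $T \neq 0$ (the case $T = 0$ being trivial, as $0 \in \FR(X,Y) \cap \RSE(X,Y)$), if $\|Tx_0\| = \|T\|$ for $x_0 \in S_X$, then the injectivization $\tilde{T}$ satisfies $\|\tilde{T}\| = \|T\|$ and $\tilde{T}(q(x_0)) = Tx_0$, which forces $\|q(x_0)\| = 1$; hence $q(x_0) \in S_{X/\ker T}$. The key step is then to observe that in a \emph{finite-dimensional} strictly convex space every point of the unit sphere is strongly exposed: taking a norming functional $f$ of $q(x_0)$, strict convexity yields that $f$ exposes $q(x_0)$ (no other sphere point is normed by $f$), and the compactness of $B_{X/\ker T}$ upgrades this to strong exposure, since any norm-one sequence along which $\re f \to 1$ has all of its cluster points equal to $q(x_0)$. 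With $q(x_0) \in \str(B_{X/\ker T})$ in hand, Proposition~\ref{prop:strexp-quotient} finishes the first statement.

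For the ``therefore'' part, the plan is to show that the hypothesis $X^*$ smooth forces $X/\ker T$ to be strictly convex for every $T$, after which the first statement applies verbatim to every $T \in \FR(X,Y) \cap \NA(X,Y)$. Here I would invoke the duality $(X/\ker T)^* \cong (\ker T)^\perp$, which realizes the dual of the quotient as a (closed) subspace of $X^*$. The point I would exploit is that \emph{smoothness passes to subspaces}: if $g \in S_{Z^*}$ norms some $z \in S_Z$ for a subspace $Z \subseteq X^*$, then any Hahn--Banach extension of $g$ is a norming functional of $z$ in $X^*$, so smoothness of $X^*$ makes this extension unique and hence $g$ itself unique; thus $(\ker T)^\perp$ is smooth. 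Finally, since smoothness of a dual space $W^*$ implies strict convexity of $W$, applying this with $W = X/\ker T$ shows that $X/\ker T$ is strictly convex, and we are reduced to the first statement.

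The step I expect to require the most care is the duality reduction in the second part: one must correctly identify $(X/\ker T)^*$ with $(\ker T)^\perp$ as an isometric subspace of $X^*$ and then chain the two standard (but only one-directional in general) implications ``subspaces of smooth spaces are smooth'' and ``smoothness of $W^*$ implies strict convexity of $W$''. The first-part argument is comparatively routine, its only genuine input being the use of finite-dimensionality (compactness) to promote exposed points to strongly exposed points---a step that would fail in infinite dimensions and that explains why the finite-rank hypothesis is essential.
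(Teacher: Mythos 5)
Your proposal is correct and follows essentially the same route as the paper: both reduce to Proposition~\ref{prop:strexp-quotient}, use strict convexity plus finite-dimensionality of $X/\ker T$ to get $S_{X/\ker T}=\str(B_{X/\ker T})$, and handle the ``therefore'' part via the fact that smoothness of $X^*$ makes every quotient of $X$ strictly convex. The only difference is that you spell out details the paper leaves implicit (that $q(x_0)\in S_{X/\ker T}$, the compactness upgrade from exposed to strongly exposed, and the duality $(X/\ker T)^*\cong(\ker T)^\perp$ behind the last step), all of which you do correctly.
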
 

\begin{proof}
Let $T \in \FR(X,Y) \cap \NA (X,Y)$ be given. Since $X/\ker T$ is strictly convex, every point in $S_{X/\ker T}$ is an exposed point. By finite-dimensionality of $X/\ker T$, we conclude that $S_{X/\ker T} = \str (B_{X / \ker T})$. Proposition \ref{prop:strexp-quotient} shows that $T \in \overline{\FR(X,Y) \cap \RSE(X,Y)}$ and completes the proof.

For the last part, just observe that if $X^*$ is smooth, then every quotient of $X$ is strictly convex.
\end{proof}

The last goal in this section will be to show that $\UQNA$ is a $G_\delta$ subset of $\Lin (X,Y)$. Recall that the same is true for the set $\ASE(X,Y)$ (see, for instance, the proof of \cite[Theorem 5]{Bourgain77}). Let us mention that it can be easily observed that $\RSE (X,Y)$ is not always residual, even being dense. Indeed, note that $\RSE(c_0, \mathbb{K}) = \NA(c_0,\mathbb{K})=\ell_1 \cap c_{00}$ (cf.\ Remark \ref{remark:implications} and \cite[Example 1.9]{JMR23}). 

\begin{prop}\label{prop:gdelta}
Let $X$ and $Y$ be Banach spaces. Then, the set $\UQNA(X, Y)$ is a $G_{\delta}$ subset of $\ELL(X, Y)$.
\end{prop}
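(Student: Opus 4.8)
The plan is to recast membership in $\UQNA(X,Y)$ as an $\varepsilon$--$\delta$ statement measuring the ``diameter up to rotation'' of the images of near-maximizing points, and then to exhibit this statement as a countable intersection of open sets. For $T\in\Lin(X,Y)$ and $\delta>0$ I set
\[
D(T,\delta):=\sup\Bigl\{\inf_{\theta\in\mathbb{T}}\|Tx-\theta Tx'\|\colon x,x'\in B_X,\ \|Tx\|\geq\|T\|-\delta,\ \|Tx'\|\geq\|T\|-\delta\Bigr\},
\]
(the supremum being over a nonempty set, since $x=x'$ with $\|Tx\|\geq\|T\|-\delta$ is always admissible), and the first goal is the reformulation
\[
T\in\UQNA(X,Y)\iff\ \forall\varepsilon>0\ \exists\delta>0\colon\ D(T,\delta)<\varepsilon. \qquad (\ast)
\]
For the forward implication, if $T$ is witnessed by $y_0\in\|T\|S_Y$, then any maximizing sequence $(x_n)\subseteq B_X$ (i.e.\ $\|Tx_n\|\to\|T\|$) satisfies $\dist(Tx_n,\mathbb{T}y_0)\to0$: otherwise some subsequence stays $\eta$-away from $\mathbb{T}y_0$, yet being itself maximizing it admits, by the definition of $\UQNA$, a further subsequence with $Tx\to\theta y_0\in\mathbb{T}y_0$, a contradiction. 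Were $(\ast)$ to fail, I could extract near-extremal pairs forming two maximizing sequences $(x_n),(x_n')$ with $\inf_\theta\|Tx_n-\theta Tx_n'\|$ bounded below; approximating both $Tx_n$ and $Tx_n'$ by rotations of $y_0$ and using the triangle inequality contradicts this, so $D(T,\delta)\to0$ as $\delta\to0^+$.

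For the converse --- the delicate point --- I must manufacture a single witness $y_0$. Starting from any sequence with $\|Tx_n\|\to\|T\|$, condition $(\ast)$ makes $(Tx_n)$ Cauchy modulo $\mathbb{T}$; a telescoping choice of rotations $\phi_k\in\mathbb{T}$ along a subsequence $(x_{n_k})$ turns $(\phi_k Tx_{n_k})$ into a genuine norm-Cauchy sequence, and its limit $y_0$ satisfies $\|y_0\|=\|T\|$. Condition $(\ast)$ then forces $\dist(Tu_n,\mathbb{T}y_0)\to0$ for every maximizing sequence $(u_n)$ (each $Tu_n$ is close, up to rotation, to some $\phi_k Tx_{n_k}\approx y_0$), and compactness of $\mathbb{T}$ produces $\theta_{\sigma(n)}\to\theta$ with $Tu_{\sigma(n)}\to\theta y_0$; hence $T\in\UQNA(X,Y)$ with witness $y_0$.

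Finally I would write $\UQNA(X,Y)=\bigcap_{n\in\N}U_n$ with $U_n:=\{T\in\Lin(X,Y)\colon\exists\,\delta>0,\ D(T,\delta)<1/n\}$, the equality being immediate from $(\ast)$. Each $U_n$ is open: if $T\in U_n$ with $D(T,\delta)=\tfrac1n-3\eta<\tfrac1n$ for some $\delta,\eta>0$, then for $S$ with $\rho:=\|S-T\|<\min(\delta/4,\eta)$ I use the threshold $\delta/2$ for $S$. Any pair $x,x'$ that is near-maximizing for $S$ (norms $\geq\|S\|-\delta/2$) is near-maximizing for $T$, since $\|Tx\|\geq\|Sx\|-\rho\geq\|T\|-\delta/2-2\rho\geq\|T\|-\delta$; hence $\inf_\theta\|Sx-\theta Sx'\|\leq\inf_\theta\|Tx-\theta Tx'\|+2\rho\leq D(T,\delta)+2\rho<\tfrac1n$, so $D(S,\delta/2)<\tfrac1n$ and $S\in U_n$. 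The main obstacle is the bookkeeping with the rotation group $\mathbb{T}$: both the extraction of a single limit $y_0$ from the ``Cauchy modulo rotation'' behaviour and the verification that the unavoidable $2\|S-T\|$ loss in the openness estimate is absorbed by the strict inequality defining $U_n$ require one to track the scalars $\theta$ carefully; the remaining steps are routine.
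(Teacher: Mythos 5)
Your proposal is correct and takes essentially the same route as the paper: your sets $U_n$ defined via the rotational diameter $D(T,\delta)$ are, up to a factor of $2$, the paper's open sets $A_\varepsilon$ (near-maximizing images contained in some $\mathbb{T}\,U(y,\varepsilon)$), and in both arguments the converse direction manufactures the witness $y_0$ by the same telescoping-rotations Cauchy construction, with the openness of the layers checked by a direct perturbation estimate. If anything, your explicit passage to a subsequence with summable rotation-modulo gaps is slightly more careful than the paper's telescoping step, which bounds only consecutive differences by $\frac{1}{k+1}+\frac{1}{k}$.
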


Before giving a proof of this result, we state the elementary lemma which can be proved easily. 

\begin{lem}\label{lem:Ae}
    Let $X$ and $Y$ be Banach spaces. For every $T\in \ELL(X, Y)$, define
    \begin{equation*}
        \widetilde{S}(T, \eta)=\{Tx \colon  x\in B_X \text{ with }  \|Tx\|>\|T\|-\eta\}.
    \end{equation*}
    Then, for every $\varepsilon>0$ the set
    \begin{equation*}
        A_{\varepsilon}=\{T\in \ELL(X, Y) \colon \exists \, y\in Y, \eta>0 \text{ such that } \, \widetilde{S}(T, \eta)\subseteq \mathbb{T} U(y, \varepsilon)\}
    \end{equation*}
    is open, where $U(y,\varepsilon)$ denotes the set $\{ z \in Y \colon \|z-y\| < \varepsilon\}$.
\end{lem}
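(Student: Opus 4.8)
The plan is to prove openness by a direct perturbation argument, after reformulating membership in $A_\varepsilon$ in terms of the distance to the orbit $\mathbb{T}y$. First I would record the elementary fact that, for $z,y \in Y$, one has $z \in \mathbb{T}U(y,\varepsilon)$ if and only if $\dist(z,\mathbb{T}y) := \inf_{\theta \in \mathbb{T}}\|z-\theta y\| < \varepsilon$, and that $z \mapsto \dist(z,\mathbb{T}y)$ is $1$-Lipschitz. Thus $T \in A_\varepsilon$ precisely when there exist $y \in Y$ and $\eta>0$ with $\dist(Tx,\mathbb{T}y) < \varepsilon$ for every $x \in B_X$ satisfying $\|Tx\| > \|T\|-\eta$. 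This rephrasing is what makes the perturbation estimate transparent.

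The core perturbation estimate runs as follows. Suppose we have, for the given $T$, a \emph{strict margin}: a center $y_1$, a threshold $\eta_1>0$, and $\varepsilon_1 < \varepsilon$ with $\widetilde{S}(T,\eta_1) \subseteq \mathbb{T}U(y_1,\varepsilon_1)$. Set $\delta := \min\{\eta_1/4,\ (\varepsilon-\varepsilon_1)/2\}$ and take any $S \in \ELL(X,Y)$ with $\|S-T\|<\delta$. For $x \in B_X$ with $\|Sx\| > \|S\|-\eta_1/2$, the inequalities $\|S\| \ge \|T\|-\delta$ and $\|Tx\| \ge \|Sx\|-\delta$ give $\|Tx\| > \|T\| - \eta_1/2 - 2\delta \ge \|T\|-\eta_1$, so $Tx \in \widetilde{S}(T,\eta_1)$ and hence $\dist(Tx,\mathbb{T}y_1) < \varepsilon_1$. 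The Lipschitz property then yields $\dist(Sx,\mathbb{T}y_1) \le \dist(Tx,\mathbb{T}y_1) + \|Sx-Tx\| < \varepsilon_1 + \delta < \varepsilon$. Therefore $\widetilde{S}(S,\eta_1/2) \subseteq \mathbb{T}U(y_1,\varepsilon)$, which shows $S \in A_\varepsilon$; as $S$ was arbitrary in the ball of radius $\delta$ about $T$, this ball lies in $A_\varepsilon$.

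The step that requires genuine care — and which I expect to be the main obstacle — is manufacturing the strict margin $\varepsilon_1<\varepsilon$ out of the bare inclusion $\widetilde{S}(T,\eta)\subseteq \mathbb{T}U(y,\varepsilon)$ into an \emph{open} set. The naive choice $y_1=y$ only guarantees $\dist(Tx,\mathbb{T}y)<\varepsilon$ pointwise, but the supremum of these distances over near-maximizers may equal $\varepsilon$ (approached yet not attained, when $B_X$ is noncompact), and then a perturbation can push images of near-maximizers of $S$ outside $\mathbb{T}U(y,\varepsilon)$, leaving only the weaker conclusion $S \in A_{\varepsilon+\delta}$. To create true slack I would exploit that every norm-cluster point of the near-maximizer images lies on the sphere $\|T\|S_Y$ (since $\|Tx_n\|\to\|T\|$ forces any norm-limit to have norm $\|T\|$), and then either shrink $\eta$ or replace $y$ by a better-placed center (a Chebyshev-type center for the relevant cluster set on the sphere) so as to bring the orbit-radius of the near-maximizer images strictly below $\varepsilon$. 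Carefully justifying that such a re-centering always lowers the radius below $\varepsilon$, in full generality and including the noncompact case, is the delicate point of the argument; once it is established, the perturbation estimate of the second paragraph closes the proof.
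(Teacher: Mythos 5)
Your reformulation via $\dist(\,\cdot\,,\mathbb{T}y)$ and your perturbation estimate in the second paragraph are both correct: if one has a strict margin $\widetilde{S}(T,\eta_1)\subseteq\mathbb{T}U(y_1,\varepsilon_1)$ with $\varepsilon_1<\varepsilon$, then indeed the ball of radius $\min\{\eta_1/4,(\varepsilon-\varepsilon_1)/2\}$ around $T$ lies in $A_\varepsilon$. You have also diagnosed the crux correctly: everything hinges on manufacturing the strict margin. But the step you defer to the end — that a re-centering (Chebyshev-type or otherwise) always brings the orbit-radius of the near-maximizer images strictly below $\varepsilon$ — is not merely delicate; it is false, and no argument can fill that gap for the statement as literally written. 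Note that the paper itself offers no proof (it declares the lemma ``can be proved easily''), so there is no paper argument to fall back on here.

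Here is a concrete obstruction. Take $\mathbb{K}=\mathbb{R}$, $X=\ell_1$, $Y=\ell_2$ with orthonormal basis $(u_n)$, $\varepsilon=1$, and define $T\in\ELL(\ell_1,\ell_2)$ by $Te_n=(1-4^{-n})u_n$. Then $\|T\|=1$ is not attained, and in fact $\|Tx\|<1$ for every $x\in B_{\ell_1}\setminus\{0\}$, so the degenerate witness $y=0$ gives $\widetilde{S}(T,\eta)\subseteq U(0,1)=\mathbb{T}U(0,1)$ for any $\eta$; hence $T\in A_1$. However, the near-maximizer images $Te_n$ have \emph{no} norm-cluster points at all (they spread along the orthonormal system), so your sphere-cluster heuristic has nothing to grab onto; and for any candidate center $y'$ one computes $\min_{\theta=\pm1}\|Te_n-\theta y'\|^2=(1-4^{-n})^2+\|y'\|^2-2(1-4^{-n})|\langle u_n,y'\rangle|\to 1+\|y'\|^2$, which is $>1$ if $y'\neq0$ and exactly $1$ if $y'=0$. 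So no re-centering and no shrinking of $\eta$ ever yields $\varepsilon_1<1$: the strict margin simply does not exist. Worse, the lemma itself fails at this $T$: for $S_t:=(1+t)T$ with $t>0$ we have $\|S_t-T\|=t$, yet for every $y\in\ell_2$ and every $\eta>0$ the points $S_te_n\in\widetilde{S}(S_t,\eta)$ (for $n$ large) satisfy $\min_{\theta=\pm1}\|S_te_n-\theta y\|^2\to(1+t)^2+\|y\|^2>1$, so $S_t\notin A_1$; thus $T\in A_1$ is not an interior point and $A_1$ is not open. (By rescaling, the same happens for every $\varepsilon>0$.) The moral is that the failure is driven exactly by the degenerate centers you would need to rule out: when $\|y\|$ is allowed to be small (in particular $y=0$, available whenever $\|T\|\leq\varepsilon$ and the norm is not attained), membership in $A_\varepsilon$ carries no localization information and is destroyed by a homothety. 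A normalization such as requiring $\|y\|=\|T\|$ — which is consistent with how Lemma~\ref{lem:Ae} is actually invoked in the proof of Proposition~\ref{prop:gdelta}, where the witnesses $y_n$ satisfy $\|y_n\|\to\|T\|$ — blocks this counterexample and is the direction in which both the statement and your margin step would have to be repaired before your (otherwise sound) perturbation scheme can close the proof.
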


\begin{proof}[Proof of Proposition \ref{prop:gdelta}]
    Consider, for each $\varepsilon>0$, the set $A_\varepsilon$ as in Lemma \ref{lem:Ae} which is an open subset in $\Lin (X,Y)$. 
    We will prove that $$\UQNA(X, Y) = \bigcap\nolimits_{n\in \mathbb{N}} A_{n^{-1}}.$$
    First, suppose that $T\in \UQNA(X, Y)$ towards $y_0\in \|T\|S_Y$. If we had $T\notin A_{n_0^{-1}}$ for some $n_0\in \mathbb{N}$, then in particular we would have 
    $$\widetilde{S} \left(T, k^{-1} \right)\not\subseteq \mathbb {T} U(y_0, n_0^{-1})$$ 
    for all $k\in \mathbb{N}$. In other words, there exists $z_k\in B_X$ such that $Tz_k\in\tilde{S}(T, \frac{1}{k})$ and $Tz_k \notin \mathbb {T} U(y_0, n_0^{-1})$ for every $k\in \mathbb{N}$. 
    Since $\|Tz_k\| \to \|T\|$, there exists a subsequence $(z_{\sigma(k)})\subseteq (z_k)$ such that $Tz_{\sigma(k)}\to \theta y_0$ for some $\theta\in \mathbb{T}$. So we would have $T z_{\sigma (k)} \in \theta U(y_0, n_0^{-1})$ for some $k\in \mathbb{N}$, a contradiction. Therefore, $\UQNA(X, Y) \subseteq \bigcap_{n\in \mathbb{N}} A_{n^{-1}}.$
    
    On the other hand, suppose that $T\in \bigcap_{n\in \mathbb{N}} A_{n^{-1}}$. Then, {for each $n\in \mathbb{N}$, there exist $y_n\in Y$ and $\delta_n>0$ such that}
    \begin{equation}\label{eq:tildeS}
        \widetilde{S}(T, \delta_n)\subseteq \mathbb {T} U \left(y_n, n^{-1} \right).
    \end{equation}
    We may assume that $\delta_n\to 0$ decreasingly, and noting that $\tilde{S}(T, \eta)=\mathbb{T}\tilde{S}(T, \eta)$, we can find from \eqref{eq:tildeS} a sequence $(x_n)\subseteq B_X$ such that 
    \begin{equation}\label{eq:limT}
        \|Tx_n\|>\|T\|-\delta_n \text{ and } \|Tx_n-y_n\|< n^{-1}
    \end{equation} for each $n\in \mathbb{N}$. 

    Now, we will prove that $(Tx_n)$ converges to an element of $\|T\|S_Y$. For $m>n$ we have $\|Tx_m-y_m\|\leq m^{-1}$ and 
    $$
    Tx_m\in \tilde{S}(T, \delta_m)\subseteq\tilde{S}(T, \delta_n)\subseteq \mathbb {T} U\left(y_n, n^{-1} \right).
    $$ Thus, there exists $\alpha_{m,n}\in \mathbb{T}$ such that $\|\alpha_{m, n}Tx_m-y_n\|< n^{-1}$ and therefore
    \begin{equation}\label{eq:ymn}
        \|\alpha_{m,n} y_m-y_n\|\leq \|\alpha_{m,n}y_m-\alpha_{m,n}Tx_m\|+\|\alpha_{m,n}Tx_m-y_n\| < \frac{1}{m}+\frac{1}{n}
    \end{equation}
    if $m>n$. Write $z_1=y_1$ and $z_k=\beta_k y_k$, where we define $$\beta_k=\alpha_{k,k-1}\cdots \alpha_{3,2}\alpha_{2,1} \in \mathbb{T}$$ for each $k\geq 2$. 
    Then, we have that
    \begin{equation*}
        {\|z_{k+1}-z_k\|=|\alpha_{k,k-1}\cdots\alpha_{2,1}|\|\alpha_{k+1,k}y_{k+1}-y_k\|< \frac{1}{k+1}+\frac{1}{k}}
    \end{equation*}
    by \eqref{eq:ymn}, so $(z_k)$ is a Cauchy sequence and hence $\lim_k z_k=\lim_k\beta_k y_k=z_0$ for some $z_0\in Y$. Passing to a subsequence we can assume that $(\beta_k)$ converges so that $y_k\to \theta z_0$ for some $\theta\in\mathbb{T}$.
    From \eqref{eq:limT} we have that $Tx_k\to \theta z_0$ and therefore
    \begin{equation*}
\|z_0\|=\lim\nolimits_k\|y_k\|=\lim\nolimits_k \|Tx_k\|=\|T\|,
    \end{equation*}
    so $\theta z_0\in\|T\|S_Y$.
    
    Finally, suppose that $(u_n)\subseteq B_X$ is a sequence satisfying $\|Tu_n\|\to \|T\|$. Then, for each $n\in \mathbb{N}$ there exists $\sigma(n)\in \mathbb{N}$ such that $Tu_{\sigma(n)}\in \tilde{S}(T, \delta_n)$ for every $n \in \mathbb{N}$. Thus, applying \eqref{eq:tildeS} there exists $\gamma_n\in\mathbb{T}$ such that 
    \begin{equation*}
        \|Tu_{\sigma(n)}-\gamma_n y_n\|< n^{-1}.
    \end{equation*}
    Passing again to a subsequence we may assume that $\gamma_n$ converges to $\gamma\in \mathbb{T}$. As $\lim_n y_n=\theta z_0$ we finally see that $(Tu_{\sigma(n)})$ converges to $\gamma \theta z_0$.
\end{proof}

Observe that the above proposition, combined with Proposition \ref{prop:una=na+uqna}, will enable us to obtain not only the denseness but also the residuality of $\RSE(X, Y)$ given that $\UQNA(X, Y)$ is dense and $\NA(X, Y)$ is residual in $\Lin (X,Y)$. As we will see in Example~\ref{example:beta+RNP-notimplyRSEdense}, the mere denseness of $\NA(X, Y)$ is not enough.

\begin{cor} 
    Let $X$ and $Y$ be Banach spaces. If $\UQNA(X, Y)$ is dense and $\NA(X, Y)$ is residual, then $\RSE(X, Y)$ is residual.
\end{cor}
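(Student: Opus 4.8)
The plan is to read off the result directly from the two structural facts already established, namely the identity $\RSE(X,Y) = \NA(X,Y) \cap \UQNA(X,Y)$ of Proposition~\ref{prop:una=na+uqna} and the fact that $\UQNA(X,Y)$ is a $G_\delta$ subset of $\ELL(X,Y)$ from Proposition~\ref{prop:gdelta}. The whole argument is then a routine Baire category computation, with no serious obstacle; the only thing one must take care of is to verify that the ambient space is a Baire space and to combine the notions of ``dense $G_\delta$'' and ``residual'' correctly.

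First I would record that $\ELL(X,Y)$, being a Banach space, is a complete metric space and hence a Baire space, so that the Baire category theorem is available and finite (indeed countable) intersections of residual sets remain residual. Next I would argue that $\UQNA(X,Y)$ is itself residual: by Proposition~\ref{prop:gdelta} it is a $G_\delta$ set, say $\UQNA(X,Y)=\bigcap_{n} U_n$ with each $U_n$ open, and by hypothesis it is dense; since a dense $G_\delta$ set has meager complement $\bigcup_n U_n^{c}$ (each $U_n^{c}$ being closed with empty interior), it follows that $\UQNA(X,Y)$ is residual.

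Finally I would conclude using Proposition~\ref{prop:una=na+uqna}. Writing $\RSE(X,Y)=\NA(X,Y)\cap \UQNA(X,Y)$, its complement is
\[
\ELL(X,Y)\setminus \RSE(X,Y)=\bigl(\ELL(X,Y)\setminus\NA(X,Y)\bigr)\cup\bigl(\ELL(X,Y)\setminus\UQNA(X,Y)\bigr),
\]
which is a union of two meager sets, the first because $\NA(X,Y)$ is residual by hypothesis and the second by the previous paragraph. A finite union of meager sets is meager, so the complement of $\RSE(X,Y)$ is meager, i.e.\ $\RSE(X,Y)$ is residual, as desired. If anything requires comment it is merely the bookkeeping of these category notions rather than any genuine difficulty, which is why the result is naturally stated as an immediate corollary of Propositions~\ref{prop:una=na+uqna} and~\ref{prop:gdelta}.
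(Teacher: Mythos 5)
Your proposal is correct and follows exactly the route the paper intends: the corollary is stated there as an immediate consequence of Proposition~\ref{prop:una=na+uqna} (the identity $\RSE=\NA\cap\UQNA$) and Proposition~\ref{prop:gdelta} (that $\UQNA$ is $G_\delta$), with the same Baire category bookkeeping you spell out. Your only added content is to make explicit the standard facts that a dense $G_\delta$ subset of the Banach space $\ELL(X,Y)$ is residual and that the intersection of two residual sets is residual, which is precisely the routine verification the paper leaves to the reader.
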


It is worth mentioning that we do not know if the fact that $\NA(X,Y)$ is residual actually implies that $\ASE(X,Y)$ is dense, see (Q3) in \cite[\S 4]{JMR23}, and that this is the case when $X$ and $Y^*$ are separable \cite[Theorem 4.1]{JMR23}. 

\section{\texorpdfstring{$\RSE$}{RSE} version of Classical and Recent Results}\label{sec:classical-results}
Our goal in this section is to present $\RSE$ versions of the results from Acosta \cite{acosta1} on the failure of Lindenstrauss property B for strictly convex infinite-dimensional spaces, 
Uhl \cite{Uhl} on operators from $L_1(\mu)$ spaces, Schachermayer \cite{Schachermayer} on weakly compact operators from $C(K)$ spaces, and Johnson--Wolfe \cite{JW79} on compact operators from $C(K)$ and $L_1(\mu)$ spaces. In the later case, we actually generalize the results to finite-rank operators from Banach spaces with a good linear behaviour of their set of norm-attaining functionals as it was done in \cite{KLMW} for the usual norm attainment. Let us comment that, our results in this section apply to most classical Banach spaces as domain such as $C_0(L)$ spaces and   $L_p(\mu)$ spaces for $1\leq \mu\leq \infty$. In the case of $L_p(\mu)$ the result is immediate when $1<p<\infty$ as these spaces have the RNP, so we actually obtain that $\ASE(L_p(\mu),Y)$ is dense for every $Y$ \cite{Bourgain77}. 

\subsection{Nonexistence of a \texorpdfstring{$\RSE$}{RSE} Universal Infinite-Dimensional Range Space
}\label{subsec:Acosta-RSE}
Let us directly enunciate the result in the title.

\begin{theorem}\label{theorem:no-infdim-hasRSE-B}
Let $Y$ be an infinite-dimensional Banach space. Then, there is a Banach space $X$ such that $\RSE(X,Y)$ is not dense in $\Lin (X,Y)$. Moreover, such space $X$ can be chosen independently of the choice of $Y$.
\end{theorem}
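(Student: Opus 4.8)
The plan is to fix one domain space $X$ once and for all and, given an infinite-dimensional $Y$, produce an operator $T_0\in\Lin(X,Y)$ together with a radius $\rho>0$ such that no $S$ with $\|S-T_0\|<\rho$ is $\RSE$. The natural reduction runs through Proposition~\ref{prop:una=na+uqna} and Lemma~\ref{lem:strexp}(b): if $S\in\RSE(X,Y)$ and $\|Sx_0\|=\|S\|$ for some $x_0\in S_X$, then $Sx_0$ is a point of $S(B_X)$ which is simultaneously of maximal norm and a strongly exposed point of $\overline{S(B_X)}$. Hence it suffices to arrange that, for every $S$ near $T_0$, the set $\overline{S(B_X)}$ has \emph{no attained strongly exposed point of maximal norm}: every maximal-norm point actually hit by $S$ on $B_X$ fails to be strongly exposed, while any strongly exposing direction of maximal norm is only approached in the limit. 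This single geometric condition defeats $\RSE$ irrespective of the individual behaviour of $\NA$ and $\UQNA$.

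The only input about $Y$ is its infinite-dimensionality, which I would use through a normalized basic sequence $(y_n)$ (Mazur) with biorthogonal functionals $(y_n^\ast)$ satisfying $\sup_n\|y_n^\ast\|\le M$; such a sequence is automatically $\delta$-separated with $\delta=1/M$, since $\|y_n-y_m\|\ge\|y_n^\ast\|^{-1}\,|y_n^\ast(y_n-y_m)|\ge 1/M$ for $n\ne m$. This separation will fix the perturbation radius $\rho\sim\delta$. Conceptually there are two regimes that a single $X$ must defeat at once. When $Y$ is strictly convex and LUR (e.g.\ $Y=\ell_2$), Corollary~\ref{cor:lur} gives $\overline{\RSE(X,Y)}=\overline{\NA(X,Y)}$, and moreover $\UQNA(X,Y)$ is automatically dense because $Y$ has the RNP \cite[Corollary~3.10]{CCJM}; so one is forced to defeat \emph{norm attainment}, meaning $X$ must be non-reflexive and $T_0(B_X)$ must fail to be closed ``at the top''. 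When instead $Y$ has property $\beta$ (e.g.\ $Y=c_0$), $\NA(X,Y)$ is automatically dense and one must instead defeat \emph{uniqueness}, forcing the maximal value to be approached along the separated family $(y_n)$ in genuinely different (non-rotational) directions. The unifying geometric statement of the previous paragraph is precisely what encompasses both.

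To realize this I would take $X$ to be a fixed non-reflexive space combining a $c_0$-type flatness ``on top'' with an $\ell_1$-type freedom that allows encoding an arbitrary separated family of the target---concretely a space built from a $c_0$-sum of finite-dimensional blocks---and define $T_0$ so that the top level set of $\overline{T_0(B_X)}$ is modelled on $\{(1-1/n)y_n\}_n$: its norm is $1$, it is approached along a sequence whose images cluster at the separated points $y_n$, and no $x\in B_X$ attains norm $1$. A maximizing sequence can then be steered so that its images accumulate at any of the $y_n$, whence any candidate strong maximum value $Sx_0$ lies at distance $\ge\delta/2$ from some other cluster point; this contradicts uniqueness (the $\UQNA$ obstruction) and, where the direction happens to be strongly exposed, attainment (the $\NA$ obstruction). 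Quantifying the margin in terms of $\delta$ and $\|S-T_0\|$ should yield the uniform radius $\rho$, and since the recipe for $X$ uses only finite-dimensional building blocks it does not depend on $Y$, giving the final ``moreover'' clause.

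The main obstacle is exactly the boundedness-versus-flatness tension hidden in the definition of $T_0$. To send the large, $c_0$-flat unit ball of the fixed space $X$ into an \emph{arbitrary} $Y$ while retaining a norm-one, spread, non-attained top, one cannot simply map basis vectors to the normalized $y_n$: distinct blocks would pile onto overlapping targets and blow up the norm, since the total $\ell_1$-mass of $B_X$ is unbounded. The delicate task is to choose the weights and the block-to-target assignment so that $T_0$ is bounded with $\|T_0\|=1$, so that the top is robustly both non-attained and non-uniquely-approached, and---hardest of all---so that \emph{every} small perturbation $S$ inherits these two defects simultaneously, i.e.\ that one cannot perturb $T_0$ to ``sharpen'' the flat/spread top into a single attained strongly exposed point. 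Controlling this robustness uniformly over all infinite-dimensional $Y$, using only the separation constant $\delta$ of the basic sequence, is where the real work lies.
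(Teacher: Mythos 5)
Your opening reduction (via Proposition~\ref{prop:una=na+uqna} and Lemma~\ref{lem:strexp}(b)) matches the paper's starting point, but the concrete recipe for the universal domain --- ``a space built from a $c_0$-sum of finite-dimensional blocks'' --- is provably the wrong kind of space, not merely an unfinished one. If $X=\bigl(\sum_n\oplus F_n\bigr)_{c_0}$ with each $F_n$ finite-dimensional, then the finitely supported functionals in $X^*=\bigl(\sum_n\oplus F_n^*\bigr)_{\ell_1}$ form a dense \emph{linear} subspace consisting of norm-attaining functionals, so Theorem~\ref{theo-finite rank} (see also Corollary~\ref{corollary--bigoneforfiniterank-compact-fordomainspaces}(g)) gives $\FR(X,Y)\subseteq\overline{\FR(X,Y)\cap\RSE(X,Y)}$ for \emph{every} $Y$. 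Now take $Y=\ell_2$, one of the targets your single $X$ must defeat: such an $X$ is isomorphic to a closed subspace of $c_0$, and every bounded operator from a subspace of $c_0$ into $\ell_2$ is compact (a normalized weakly null sequence $(x_n)$ with $\|Tx_n\|\geq\delta$ has a subsequence equivalent to the $c_0$-basis, so $\sum_n x_n$ is weakly unconditionally Cauchy, hence $\sum_n Tx_n$ is weakly unconditionally Cauchy in $\ell_2$, forcing $Tx_n\to0$); since $\ell_2$ has the approximation property, finite-rank operators are dense in $\comp(X,\ell_2)=\Lin(X,\ell_2)$, and therefore $\RSE(X,\ell_2)$ is dense in $\Lin(X,\ell_2)$. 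So no $T_0$ with your claimed uniform radius $\rho$ can exist over this domain, and the plan collapses exactly in the strictly convex regime you yourself single out.

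The second gap is the one you concede in your last paragraph, and it is where the entire proof lives: the boundedness-versus-flatness tension is never resolved, and $\delta$-separation of a basic sequence cannot resolve it. For a general basic sequence $(y_n)$, a series $\sum_n w_n y_n$ converges only when the weights are summable, and summable weights destroy the ``wiggle room'' the domain needs. The paper settles this with two specific inputs you are missing. First, the domain is Acosta's space $X(w)$ with $w\in\ell_2\setminus\ell_1$ (Example~\ref{example:Acosta-X(w)}), whose norm $\|(1-w)z\|_\infty+\|wz\|_1$ on the predual yields property (iii): every $x_0\in S_{X(w)}$ satisfies $\|x_0\pm\lambda e_n\|\leq1$ for arbitrarily large $n$ and small $\lambda>0$ --- this is what $w\notin\ell_1$ buys. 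Second, the Dvoretzky--Rogers theorem produces, in \emph{any} infinite-dimensional $Y$, unit vectors $(y_n)$ with $\sum_n w_n y_n$ unconditionally convergent --- this is what $w\in\ell_2$ buys, and it makes $T(e_n)=y_n$ bounded on the generating set $E$ of $B_{X(w)}$. The RSE mechanism is then much simpler than your ``no attained strongly exposed point of maximal norm for all nearby $S$'': by Lemma~\ref{lemma:easy-lemma-to-kill-propertyRSE-B}, any $S\in\RSE(X(w),Y)$ attaining its norm at $x_0$ with $\|x_0\pm z\|\leq1$ must satisfy $Sz=0$, so every RSE operator annihilates some $e_n$ and hence $\|T-S\|\geq\|Te_n\|=1$ --- a uniform gap obtained without controlling the image geometry of perturbations at all. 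Without Dvoretzky--Rogers (or an equivalent device) and without a domain of the $X(w)$ type rather than a $c_0$-sum, your outline cannot be completed.
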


Recall that Acosta \cite{acosta1} showed that no \emph{strictly convex} infinite-dimensional Banach space has Lindenstrauss' property B. Our proof actually follows the same general strategy as Acosta's. However, our goal is to demonstrate that the strict convexity assumption on the range space can be removed in this setting, when working with $\RSE$ operators instead of norm-attaining operators. In that paper, a Banach space was constructed as a universal domain in which the set of norm-attaining operators is not dense. For convenience, we present that space here along with the properties we will need.

\begin{example}[\mbox{\cite[Lemma~2.1, Lemma~2.2]{acosta1}}]\label{example:Acosta-X(w)}
Let $w=(w_n)\in \ell_2\backslash \ell_1$ be a decreasing sequence of positive scalars with $w_1<1$ and let $Z(w)$ be space of sequences $z$ of scalars satisfying that
    \begin{equation*}
        \|z\|:=\|(1-w)z\|_{\infty}+\|wz\|_1<\infty,
    \end{equation*}
which is a Banach space when endowed with the above formula as norm.
We denote by $(e_n)$ the sequence of coordinate functionals in $Z(w)^*$ given by
    \begin{equation*}
        e_n(z)=z(n) \quad \forall z\in Z(w),\ \forall n\in\N
    \end{equation*}
and define $X(w):=\overline{\spann}\{e_n\colon n\in \N\}\subseteq Z(w)^*$.
The following properties hold:
    \begin{itemize}
        \item[(i)] $(e_n)$ is a one-unconditional normalized basis of $X(w)$. 
        \item[(ii)] $X(w)^*\equiv Z(w)$ and $B_{X(w)}= \overline{\conv}(E)$,
        where 
        \begin{equation*}
            E=\left\{\theta_m(1-w_m)e_m+\sum_{i=1}^{n} \theta_iw_ie_i\colon m,n\in \N,\; \theta_i\in \mathbb{T},\, \forall i\right\}.
        \end{equation*}
        \item[(iii)] Given $x_0\in S_{X(w)}$ and $N\in\N$, there are $n\geq N$ and $\delta_n>0$ such that $\|x_0\pm\lambda e_n\|\leq 1$ for $\lambda\in \K$ so that $|\lambda|\leq \delta_n$.
    \end{itemize}
\end{example}

The crucial point in which we will use $\RSE$ operators in the proof of the theorem is isolated in the following easy lemma, which we will also use later on.

\begin{lemma}\label{lemma:easy-lemma-to-kill-propertyRSE-B}
Let $X$ and $Y$ be Banach spaces. If $S\in \RSE(X,Y)$ attains its norm at $x_0\in S_X$ and $\|x_0+z\|,\|x_0-z\|\leq 1$ for some $z\in X$, then $S(z)=0$.
\end{lemma}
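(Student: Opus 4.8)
The plan is to reduce everything to Lemma~\ref{lem:strexp}(b). Since $S\in\RSE(X,Y)$ attains its norm at $x_0\in S_X$, that lemma tells us $Sx_0$ is a strongly exposed point of $\overline{S(B_X)}$, and moreover it is strongly exposed by \emph{any} $y_0^*\in S_{Y^*}$ satisfying $\re y_0^*(Sx_0)=\|S\|$. So (assuming $S\neq 0$, the claim being trivial otherwise) the first step is to fix such a functional: by Hahn--Banach applied to the nonzero element $Sx_0$ we obtain $y_0^*\in S_{Y^*}$ with $y_0^*(Sx_0)=\|Sx_0\|=\|S\|$, and Lemma~\ref{lem:strexp}(b) guarantees that $y_0^*$ strongly exposes $\overline{S(B_X)}$ at $Sx_0$.

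Next I would test $y_0^*\circ S$ against the two admissible points $x_0+z$ and $x_0-z$, which lie in $B_X$ by hypothesis. A simple midpoint computation yields the key equalities: since $\re y_0^*(S(x_0\pm z))\le\|S\|$ while their sum is $2\re y_0^*(Sx_0)=2\|S\|$, both real parts must in fact equal $\|S\|$, and consequently $\re y_0^*(Sz)=0$.

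The decisive step is then to invoke strong exposition itself. The point $S(x_0+z)$ belongs to $S(B_X)\subseteq\overline{S(B_X)}$ and satisfies $\re y_0^*(S(x_0+z))=\|S\|=\re y_0^*(Sx_0)=\sup_{y\in\overline{S(B_X)}}\re y_0^*(y)$. Applying the strong-exposition property to the constant sequence $\bigl(S(x_0+z)\bigr)_n$ therefore forces $S(x_0+z)=Sx_0$, i.e.\ $Sz=0$, which is exactly the claim.

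I do not expect a serious obstacle here, as the argument is a direct application of Lemma~\ref{lem:strexp}. The only point requiring care is to recognize that obtaining $\re y_0^*(Sz)=0$ is \emph{not} by itself enough (a single functional cannot certify that a vector vanishes): one must exploit that $S(x_0+z)$ genuinely attains the supremum of $\re y_0^*$ over $\overline{S(B_X)}$ and then appeal to the uniqueness built into strong exposition through the constant sequence. It is worth stressing that the full $\RSE$ hypothesis, rather than mere norm attainment, is precisely what upgrades $Sx_0$ from an exposed to a \emph{strongly} exposed point, which is what licenses this last step.
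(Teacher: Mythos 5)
Your proof is correct and takes essentially the same route as the paper's: the same choice of a norming functional $y_0^*\in S_{Y^*}$ for $Sx_0$, the same midpoint computation forcing $\re y_0^*\bigl(S(x_0\pm z)\bigr)=\|S\|$, and the same appeal to Lemma~\ref{lem:strexp}(b) to upgrade $Sx_0$ to a strongly exposed point of $\overline{S(B_X)}$ and conclude $S(x_0+z)=Sx_0$, hence $Sz=0$. The only cosmetic differences are that you spell out the trivial case $S=0$ and the constant-sequence mechanism by which strong exposure yields the equality, both of which the paper leaves implicit.
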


\begin{proof}
Take $y_0^*\in S_{Y^*}$ such that $\re y_0^*(Sx_0)=\|Sx_0\|=\|S\|$. By Lemma~\ref{lem:strexp}, we have that $y_0^*$ strongly exposes $\overline{S(B_{X})}$ at $Sx_0$. Now, we have that
\begin{align*}
\|S\| &= \re y_0^* (Sx_0) = \re y_0^*\left(\frac{1}{2}(Sx_0+Sz)+\frac{1}{2} (Sx_0-Sz) \right) \\ 
& = \frac{1}{2}\re y_0^*\left(S(x_0+z)\right)+\frac{1}{2} \re y_0^*\left(S(x_0-z) \right) \\ &\leq 
\frac{1}{2} \|S\|\|x_0+z\| + \frac{1}{2} \|S\|\|x_0-z\|.
\end{align*}
As $y_0^*$ strongly exposes $\overline{S(B_{X})}$ at $Sx_0$, it follows that $S(x_0 \pm z)=Sx_0$, hence $S(z)=0$ as desired.
\end{proof}

\begin{proof}[Proof of Theorem~\ref{theorem:no-infdim-hasRSE-B}]
Consider a sequence $w\in\ell_2\backslash\ell_1$ satisfying the conditions required in the above example and let $X=X(w)$. By the Dvoretzky--Rogers' theorem (see \cite[Theorem 1.2]{DiestelJarchowTonge}), there exists a sequence $(y_n)\subseteq S_Y$ such that $\sum_{n=1}^\infty w_ny_n$ converges unconditionally, so the set 
\begin{equation*}
       A:=\left\{\sum_{n=1}^\infty \theta_nw_ny_n\colon \theta_n\in\K,\, |\theta_n|\leq 1,\, \forall n \right\} 
\end{equation*}
is bounded (use \cite[Theorem 1.9]{DiestelJarchowTonge}, for instance). Define $T\colon \spann\{e_n\colon n\in \N\}\subset X(w)\to Y $ by $T(e_n)=y_n$ for all $n\in \N$. Observe that 
\begin{equation*}
        \left\|T\left(\theta_m(1-w_m)e_m+\sum_{i=1}^{n} \theta_iw_ie_i\right)\right\|\leq 1+\left\|\sum_{i=1}^n \theta_iw_iy_i\right\|\leq 1 + \sup_{y\in A}\|y\|<\infty
\end{equation*}
for all $n,m\in \N$ and $\theta_i\in \mathbb{T}$. Hence $T$ is bounded on the set $E$ of Example~\ref{example:Acosta-X(w)} and, therefore, $T$ can be extended to a bounded linear operator from $X(w)$ to $Y$, which we will also denoted by $T$. Let us show that $\|T-S\|\geq 1$ whenever $S\in\RSE(X(w), Y)$, finishing thus the proof. Indeed, suppose that there exists $S \in \RSE(X(w),Y)$ which attains its norm at $x_0\in S_{X(w)}$. Use (iii) in Example~\ref{example:Acosta-X(w)} to find $n\in\N$ and $\delta_n>0$ such that
\begin{equation*}
        \|x_0 + \delta_n e_n\|,\|x_0 - \delta_n e_n\|\leq 1.
\end{equation*}
It follows from Lemma~\ref{lemma:easy-lemma-to-kill-propertyRSE-B} that $Se_n=0$. Since $\|e_n\|=1$, we have
\begin{equation*}
        \|T-S\|\geq \|Te_n-Se_n\|=\|y_n\|=1.\qedhere
\end{equation*}
\end{proof}

As a first consequence, we obtain that the fact that $\NA(X,Y)$ and $\UQNA(X,Y)$ are dense does not imply that $\RSE(X,Y)$ is, despite the fact that the latter is the intersection of the other two sets. Even more, the fact that $Y$ has property $\beta$ (which implies property B and so the denseness of $\NA(X,Y)$ for every $X$ \cite{Lin}) and the RNP (which implies that $\UQNA(X,Y)$ is residual for every $X$) is not enough to ensure the denseness of $\RSE$ operators. The following example illustrates this point:

\begin{example}\label{example:beta+RNP-notimplyRSEdense}
There is a Banach space $Y$ with Lindenstrauss' property $\beta$ and the RNP, and a Banach space $X$ such that $\RSE(X,Y)$ is not dense in $\Lin(X,Y)$.

Indeed, it suffices to consider an equivalent renorming of $\ell_2$ (say) with Lindenstrauss' property $\beta$ \cite{Partington}, and apply Theorem~\ref{theorem:no-infdim-hasRSE-B}.
\end{example}

We finish this subsection by noting that an analogue of Theorem~\ref{theorem:no-infdim-hasRSE-B} for domain spaces does not hold, since Banach spaces with the RNP satisfy the $\RSE$ version of property A \cite{Bourgain77}; that is, they serve as universal domain spaces for the denseness of $\RSE$ operators (actually, for $\ASE$ operators).

\subsection{Operators from \texorpdfstring{$L_1(\mu)$}{L1}-Spaces}\label{subsec:L1}
In the present subsection, we will restrict our focus to finite measures even though some results may hold without this restriction. This is because the discussion on the analytic version of the RNP is always done only for finite or even probability measures, for instance, in \cite{DU}. We actually refer to this book for background.

We start by proving that every strong RN operator from $L_1(\mu)$ to a Banach space $Y$ can be approximated by elements of $\RSE(L_1(\mu), Y)$.  This improves the classical result of Uhl \cite[Theorem 1]{Uhl}, which shows that operators from $L_1(\mu)$ to a Banach $Y$ with the RNP which attain their norm (in the classical sense) are dense. Our proof closely follows Uhl's original argument.  

\begin{theorem}\label{thm:L1}
Let $(\Omega,\Sigma, \mu)$ be a finite measure space, $Y$ a Banach space and $T\in \mathcal{L}(L_1(\mu), Y)$ be a strong RN operator. Then, for all $\varepsilon>0$ there exists $S\in \RSE(L_1 (\mu), Y)$ such that $\|T-S\|\leq \varepsilon$. Moreover, there is $\beta>0$ such that
$$
\overline{S(B_{L_1 (\mu)})}\subseteq \beta\overline{T(B_{L_1 (\mu)})}.
$$
\end{theorem}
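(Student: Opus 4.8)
The plan is to follow Uhl's representation-and-perturbation scheme, but to exploit the strong RN hypothesis to force the perturbed operator to attain its norm at a point whose image is a \emph{strongly exposed} point of the range ball, so that Lemma~\ref{lem:strexp+density}(b) applies. First I would represent $T$: since a strong RN operator is in particular an RN operator, applying its defining property to the $L_1(\mu)$-valued measure $\nu(A)=\chi_A$ (which has bounded variation $|\nu|=\mu$) produces a Bochner integrable derivative $g$ of $T\circ\nu$, and the bound $\|T\chi_A\|\le\|T\|\mu(A)$ forces $g\in L_\infty(\mu,Y)$ with $\|g\|_\infty=\|T\|$ and $Tf=\int_\Omega fg\,d\mu$. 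A routine averaging argument (testing with normalized indicators shrinking to Lebesgue points, together with convexity) identifies $C:=\overline{T(B_{L_1(\mu)})}$ with the closed absolutely convex hull of the essential range of $g$, and this set is an RNP set precisely because $T$ is a strong RN operator.

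Next I would locate the target. Since $C$ is an RNP set, it is the closed convex hull of its strongly exposed points; as the norm is convex and continuous, $\sup_{y\in\str(C)}\|y\|=\sup_{y\in C}\|y\|=\|T\|$, so I can fix $y_0\in\str(C)$, strongly exposed by some $y_0^*\in S_{Y^*}$ with value $s:=\re y_0^*(y_0)=\sup_C\re y_0^*>0$, whose norm $\|y_0\|$ is as close to $\|T\|$ as desired. For small $\eta>0$ the set $A_\eta=\{\omega:|y_0^*(g(\omega))|>s-\eta\}$ has positive measure (as $s=\esssup_\omega|y_0^*(g(\omega))|$), and strong exposedness supplies a phase function $\theta(\cdot)\in\mathbb{T}$ with $\|\theta(\omega)g(\omega)-y_0\|<\rho(\eta)\to 0$ on $A_\eta$. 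I would then define $T'$ through the representing function $g'$ equal to $\overline{\theta(\omega)}\tfrac{\|T\|}{\|y_0\|}y_0$ on $A_\eta$ and to $g$ elsewhere; the pointwise change is at most $(\|T\|-\|y_0\|)+\rho(\eta)$, whence $\|T-T'\|<\varepsilon/2$ for suitable $\eta$ and $\|y_0\|$. By construction $\|T'\|=\|T\|$ and $T'f_0=\hat y_0:=\tfrac{\|T\|}{\|y_0\|}y_0$ for $f_0=\tfrac{1}{\mu(A_\eta)}\theta\chi_{A_\eta}\in S_{L_1(\mu)}$, so $T'$ attains its norm at $f_0$.

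The crux is to verify that $\hat y_0\in\str(C')$, where $C':=\overline{T'(B_{L_1(\mu)})}$ is the closed absolutely convex hull of the essential range of $g'$. I would check that $y_0^*$ still exposes it: on $A_\eta^c$ the generators satisfy $|y_0^*(g)|\le s-\eta$, while on $A_\eta$ they are $\mathbb{T}$-multiples of $\hat y_0$ with $\re y_0^*(\hat y_0)=\tfrac{\|T\|}{\|y_0\|}s\ge s$, so there is a genuine gap between the maximizing direction and the rest. The standard argument that a strongly exposed generator separated by such a gap remains strongly exposed in the closed absolutely convex hull then gives $\hat y_0\in\str(C')$. This gap computation is the main obstacle, and it is exactly where localizing the perturbation to $A_\eta$ is essential: a global boost in the direction $y_0$ would scale both the maximizing value and the competing ones equally and close no gap, so it could never turn a submaximal strongly exposed point into the unique strong maximum.

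With $T'\in\NA(L_1(\mu),Y)$ attaining its norm at $f_0$ and $T'f_0=\hat y_0\in\str(\overline{T'(B_{L_1(\mu)})})$, Lemma~\ref{lem:strexp+density}(b) yields $T'\in\overline{\RSE(L_1(\mu),Y)}$, hence an $S\in\RSE(L_1(\mu),Y)$ with $\|T'-S\|<\varepsilon/2$ and $\|T-S\|<\varepsilon$. For the ``Moreover'' clause I would track inclusions: every generator of $\overline{T'(B_{L_1(\mu)})}$ lies in $\tfrac{\|T\|}{\|y_0\|}C$ (the $A_\eta^c$-values lie in $C$, and the $A_\eta$-values are $\tfrac{\|T\|}{\|y_0\|}$-scalings of points of $\mathbb{T}y_0\subseteq C$), so $\overline{T'(B_{L_1(\mu)})}\subseteq\tfrac{\|T\|}{\|y_0\|}\,\overline{T(B_{L_1(\mu)})}$; combining this with the inclusion $\overline{S(B_{L_1(\mu)})}\subseteq\beta'\,\overline{T'(B_{L_1(\mu)})}$ recorded in Remark~\ref{remark:comment-to-lemma26} produces the desired constant $\beta=\beta'\tfrac{\|T\|}{\|y_0\|}$.
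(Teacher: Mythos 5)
Your proposal is correct, and although it shares the paper's overall scaffolding---represent $T$ by a density $g$ (Lemma~\ref{lem:representable}), perturb $g$ so that the new operator attains its norm at a point whose image is strongly exposed in the closure of the range ball, then finish with Lemma~\ref{lem:strexp+density}(b) and track the inclusion via Remark~\ref{remark:comment-to-lemma26}---its key mechanism is genuinely different. The paper first discretizes $g$ to a countably valued density $h_1=\sum_i y_i\chi_{E_i}$ (Uhl's step), boosts one near-maximal value $y_{i_0}$ by a factor $\alpha>1$ to create a gap in \emph{norm}, and then proves strong exposedness of $\alpha y_{i_0}$ \emph{indirectly}: the perturbed body $\overline{T_2(B_{L_1(\mu)})}$ is an RNP set, hence the closed convex hull of its strongly exposed points, and any such point of norm exceeding $\|y_{i_0}\|+\varepsilon/4$ must, by Lemma~\ref{lem:2} and a norm pigeonhole, lie in $\mathbb{T}\{\alpha y_{i_0}\}$. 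You skip the discretization, apply the same RNP-set representation theorem at the \emph{start} (to $C=\overline{T(B_{L_1(\mu)})}$) to select an honest strongly exposed point $y_0$ with $\|y_0\|$ near $\|T\|$, create a gap in the \emph{functional values} $|y_0^*\circ g|$ by collapsing $g$ on the superlevel set $A_\eta$ to phase-aligned multiples of $y_0$, and verify strong exposedness of $\hat y_0$ \emph{directly}. Your ``standard gap argument'' does go through quantitatively: writing a point of the absolutely convex hull of the generators as $u+\lambda\hat y_0$, where $u$ carries the $A_\eta^c$-mass $t_1$ and $m:=\re y_0^*(\hat y_0)\ge s$, the condition $\re y_0^*(u+\lambda \hat y_0)>m-\delta$ forces $t_1(m-s+\eta)<\delta$, hence $t_1<\delta/\eta$, and then $|\lambda-1|\le\sqrt{2(1-\re\lambda)}\to 0$, so $\|u+\lambda\hat y_0-\hat y_0\|\le t_1\|T\|+|\lambda-1|\,\|T\|\to 0$; the estimate passes to the closure, and the strong exposedness of $y_0$ in $C$ is needed only beforehand, to supply (via the slice-diameter reformulation, equivalent to the paper's sequential definition) the measurable phase $\theta(\cdot)$ with $\|\theta g-y_0\|\le\rho(\eta)$ on $A_\eta$. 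Two details to phrase carefully when writing this up: an abstract finite measure space has no Lebesgue points, so the identification of $C$ with the closed absolutely convex hull of the essential range of $g$ should be justified by averaging over the sets $\{\omega:\|g(\omega)-y\|<\delta\}$, together with the facts that the average range lies in the closed convex hull of the essential range and that $g(\omega)$ lies in the essential range a.e.\ (by essential separability); and one should note $s>0$ (automatic for $y_0\neq 0$ since $C$ is balanced) and dispose of $T=0$ trivially. What your route buys is a self-contained, explicit modulus of strong exposure with no need to know that the \emph{perturbed} body is an RNP set; what the paper's route buys is that strong exposedness never has to be checked by hand---the RNP machinery plus a one-line norm comparison does it.
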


We will use some well-known facts that we state here for commodity of the reader. 
Recall that an operator $T \in \Lin (L_1(\mu) , Y)$ is \emph{representable} if there exists an essentially bounded Bochner integrable function $g\colon \Omega \to Y$ such that 
    $$
    T(f)=\int_{\Omega} fg \,d\mu
    $$ 
for all $f\in L_1(\mu)$. 

\begin{lem}\mbox{\textup{(\cite[Lemma 5.4]{FJLPPQ} and \cite[Theorem III.1.5]{DU})}} \label{lem:representable}
    Let $(\Omega, \Sigma, \mu)$ be a finite measure space and $Y$ a Banach space. Then $T \in \Lin (L_1(\mu) , Y)$ is an RN operator if and only if $T$ is representable. Moreover, $Y$ has the RNP with respect to $(\Omega, \Sigma, \mu)$ if and only if every bounded linear operator from $L_1 (\mu)$ to $Y$ is representable. 
\end{lem}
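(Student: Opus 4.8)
The plan is to prove the two assertions through two \emph{canonical vector measures}, to read off essential boundedness of every density from an operator-norm estimate, and to isolate the only genuinely delicate point, which I would leave to the classical Radon--Nikod\'ym theory of \cite{DU}. Throughout I would write $|G|$ for the variation of a bounded-variation $Y$-valued measure $G$, and use repeatedly that if $G$ is $\mu$-continuous then $|G|\ll\mu$ (a $\mu$-null set has all sub-pieces sent to $0$), together with the scalar Radon--Nikod\'ym theorem.

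\emph{First assertion.} For the implication ``RN operator $\Rightarrow$ representable'' I would apply the defining property of an RN operator to the single \emph{identity measure} $G_0\colon\Sigma\to L_1(\mu)$ given by $G_0(E)=\chi_E$. This is $\mu$-continuous and of bounded variation with $|G_0|=\mu$, so $T\circ G_0\colon E\mapsto T(\chi_E)$ admits a Bochner integrable Radon--Nikod\'ym derivative $g$ with respect to $\mu$; that is, $T(\chi_E)=\int_E g\,d\mu$ for every $E\in\Sigma$. The estimate $\bigl\|\int_E g\,d\mu\bigr\|=\|T(\chi_E)\|\le\|T\|\,\mu(E)$ forces $\|g(\omega)\|\le\|T\|$ for $\mu$-a.e.\ $\omega$ (a standard small-set argument, using that $g$ is essentially separably valued), so $g\in L_\infty(\mu,Y)$. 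By linearity $T$ agrees with $f\mapsto\int_\Omega fg\,d\mu$ on simple functions, and then on all of $L_1(\mu)$ by density, so $T$ is representable. The reverse implication is the main obstacle and is addressed last.

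\emph{Second assertion.} If $Y$ has the RNP with respect to $(\Omega,\Sigma,\mu)$ and $T\in\Lin(L_1(\mu),Y)$, I would run the same construction with $G(E):=T(\chi_E)$, which is $\mu$-continuous ($\|G(E)\|\le\|T\|\mu(E)$) and of bounded variation; the RNP with respect to $\mu$ supplies a Bochner density $g$, which as above lies in $L_\infty(\mu,Y)$ and represents $T$. For the converse, let $G\colon\Sigma\to Y$ be $\mu$-continuous of bounded variation and set $\lambda:=\mu+|G|$. Since $|G|\ll\mu$ we have $\lambda\sim\mu$ (mutually absolutely continuous), whence $L_\infty(\lambda)=L_\infty(\mu)$ and the multiplication map $U\colon f\mapsto f\,\tfrac{d\lambda}{d\mu}$ is an isometric isomorphism of $L_1(\lambda)$ onto $L_1(\mu)$. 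The crucial gain is that $\bigl\|\int s\,dG\bigr\|\le\int|s|\,d|G|\le\int|s|\,d\lambda$, so $f\mapsto\int f\,dG$ is a bounded operator $T_0$ on $L_1(\lambda)$ of norm at most one. Then $T_0\circ U^{-1}\in\Lin(L_1(\mu),Y)$ is representable by hypothesis, say by $\tilde g\in L_\infty(\mu,Y)=L_\infty(\lambda,Y)$, and unwinding $U$ gives $G(E)=\int_E\bigl(1+\tfrac{d|G|}{d\mu}\bigr)\tilde g\,d\mu$; the integrand is Bochner integrable since $\tilde g$ is bounded and $1+\tfrac{d|G|}{d\mu}\in L_1(\mu)$, so it is the desired Radon--Nikod\'ym derivative of $G$, and $Y$ has the RNP with respect to $\mu$.

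\emph{The main obstacle.} The hard direction is ``representable $\Rightarrow$ RN operator'', because it must handle \emph{every} finite measure space $(\Omega',\Sigma',\mu')$ and \emph{every} bounded-variation measure $G'\colon\Sigma'\to L_1(\mu)$ simultaneously, and $L_1(\mu)$ itself fails the RNP, so $G'$ need not admit an $L_1(\mu)$-valued density. The route I would take is to fix a lifting of $L_\infty(\mu)$ in order to realise $G'$ through a scalar kernel $k(E',\omega)$ that is a measure in $E'$ for a.e.\ $\omega$ and represents $G'(E')$ in $L_1(\mu)$, apply the scalar Radon--Nikod\'ym theorem fibrewise to obtain densities $\kappa(\cdot,\omega)$ with respect to $\mu'$, and then integrate against $g$ to assemble the candidate Bochner density $\omega'\mapsto\int_\Omega\kappa(\omega',\omega)\,g(\omega)\,d\mu(\omega)$ of $T\circ G'$. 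Essential boundedness of $g$ and the separability of its essential range are exactly what make this assignment well defined, strongly measurable, and integrable; verifying these measurability and density properties is the technical heart carried out in \cite[Theorem III.1.5]{DU} and \cite[Lemma 5.4]{FJLPPQ}, which I would invoke rather than reprove.
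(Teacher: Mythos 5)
The paper gives no proof of this lemma: it is quoted verbatim from the literature, with the first assertion attributed to \cite[Lemma 5.4]{FJLPPQ} and the second to \cite[Theorem III.1.5]{DU}. So your proposal can only be compared with those classical arguments, and measured against them it is correct in everything it actually proves, which is three of the four implications. Applying the RN-operator property to the canonical measure $E\mapsto\chi_E$ (whose variation is exactly $\mu$) and extracting $\|g\|\le\|T\|$ a.e.\ from $\|T(\chi_E)\|\le\|T\|\,\mu(E)$ is precisely the standard argument, and the same goes for ``RNP with respect to $\mu$ implies representable'' via $G(E)=T(\chi_E)$. Your proof of ``all operators representable implies RNP with respect to $\mu$'' via $\lambda=\mu+|G|$ and the isometry $Uf=f\,\tfrac{d\lambda}{d\mu}$ of $L_1(\lambda)$ onto $L_1(\mu)$ is a clean variant of the proof in \cite{DU}, which instead exhausts $\Omega$ by the sets where $\tfrac{d|G|}{d\mu}\le n$, applies the hypothesis on each piece, and patches the resulting densities together; your change-of-density trick avoids the patching at the cost of the (easy) checks that $\tfrac{d\lambda}{d\mu}\ge 1$ makes $U$ surjective and that the final integrand $\bigl(1+\tfrac{d|G|}{d\mu}\bigr)\tilde g$ is Bochner integrable because $\tilde g$ is essentially bounded. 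Deferring the remaining implication to the cited sources is consistent with what the paper itself does.

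Two caveats. First, your sketch of the deferred direction (``representable implies RN operator'') via a lifting is the weakest part: a lifting acts on $L_\infty(\mu)$, not on the $L_1(\mu)$-valued measure $G'$, so the kernel $k(E',\omega)$ is not well defined as stated. Since you explicitly invoke the references there, this is not a gap in what you claim to prove, but a cleaner self-contained route exists: approximate the density $g$ uniformly by countably valued $h=\sum_i z_i\chi_{E_i}$, note that $f\mapsto\int fh\,d\mu$ factors through $\ell_1$ (which has the RNP) and is therefore an RN operator, and then use that RN operators are stable under norm limits --- if $T_n\to T$ and $T_n\circ G$ has density $g_n$ with respect to $\mu$, then $\int_\Omega\|g_n-g_m\|\,d\mu\le\|T_n-T_m\|\,|G|(\Omega)$, so $(g_n)$ converges in $L_1(\mu,Y)$ to a density for $T\circ G$. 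Second, the motivational aside that ``$L_1(\mu)$ itself fails the RNP'' needs the qualifier that $\mu$ is not purely atomic; this does not affect any of your arguments.
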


We will also need the following geometrical lemma whose proof is straightforward. 

\begin{lem}\label{lem:2}
    Let $C$ be a subset of a Banach space $X$. If $x_0$ is a denting point of $ \overline{\conv}(C)$, then $x_0\in \overline{C}$. In particular, $\str(\overline{\conv}(C))\subseteq{\overline{C}}$.
\end{lem}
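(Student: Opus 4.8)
The plan is to argue by contradiction, exploiting the defining feature of a denting point: arbitrarily small slices containing the point. Write $K:=\cl{\conv}(C)$ and suppose, towards a contradiction, that $x_0\notin\overline{C}$, so that $\varepsilon:=\dist(x_0,C)>0$, i.e.\ $\|x_0-c\|\geq\varepsilon$ for every $c\in C$. Since $x_0$ is a denting point of $K$, there is a slice $S=\{y\in K\colon \re x^*(y)>M-\alpha\}$ (for some $x^*\in X^*\setminus\{0\}$, $\alpha>0$, where $M:=\sup_{y\in K}\re x^*(y)$) with $x_0\in S$ and $\diam(S)<\varepsilon$.

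The key observation is that this slice must meet $C$. This rests on the elementary fact that a continuous linear functional has the same supremum over $C$, over $\conv(C)$, and over its closure $K$; that is, $M=\sup_{c\in C}\re x^*(c)$ (taking convex combinations does not increase the supremum of an affine function, and passing to the closure does not change it by continuity). Now $x_0\in S$ gives $\re x^*(x_0)>M-\alpha$, and since $M$ is the supremum of $\re x^*$ over $C$, we may pick $c_0\in C$ with $\re x^*(c_0)>M-\alpha$; as $C\subseteq K$, this means $c_0\in S$. Both $x_0$ and $c_0$ lie in the slice $S$, whence $\|x_0-c_0\|\leq\diam(S)<\varepsilon$, contradicting $\dist(x_0,C)\geq\varepsilon$. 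Therefore $x_0\in\overline{C}$.

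For the ``in particular'' clause, I would simply invoke the remark recorded in Subsection~\ref{subsec:needed-definitions} that every strongly exposed point of a closed bounded set is a denting point; applying the first part to each $x_0\in\str(K)$ then yields $\str\bigl(\cl{\conv}(C)\bigr)\subseteq\overline{C}$. I expect no serious obstacle here: the only point requiring a moment's care is the interchange $\sup_{K}\re x^*=\sup_{C}\re x^*$, which guarantees that small slices of $\cl{\conv}(C)$ cannot be ``pure closure/convex-hull artifacts'' but always capture genuine points of $C$ close to $x_0$.
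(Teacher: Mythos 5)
Your proof is correct and matches the intended argument: the paper states this lemma without proof (calling it straightforward), and your contradiction via a slice of diameter less than $\dist(x_0,C)$, combined with the identity $\sup_{\overline{\conv}(C)} \re x^* = \sup_{C} \re x^*$ to see that every nonempty slice of $\overline{\conv}(C)$ meets $C$, is the canonical way to carry it out, with the ``in particular'' clause following from the paper's own remark that strongly exposed points are denting points. The only cosmetic caveat is that writing the slice as $\{y\colon \re x^*(y)>M-\alpha\}$ tacitly uses $M<\infty$, which is harmless here since the paper's definition of denting point applies to bounded sets.
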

 
\begin{proof}[Proof of Theorem~\ref{thm:L1}]
    Let $T\in \mathcal{L}(L_1(\mu),Y)$ be a strong RN operator and $\varepsilon >0$. By Lemma \ref{lem:representable}, we can find an essentially bounded Bochner integrable function $g\colon \Omega \to Y$ such that 
    $$
    T(f)=\int_{\Omega} fg \, d\mu, \quad \forall f \in L_1 (\mu).
    $$   
    Then there exists a countably valued $\mu$-measurable function $h\colon\Omega \to Y$ such that $\esssup \|g-h\|_Y < \frac{\varepsilon}{6}$ and 
   \begin{equation*}
     h=\sum\limits_{i=1}^{\infty} z_i \chi_{E_i},
    \end{equation*}
    where $z_i\in Y$, $E_i\in \Sigma$, $\mu(E_i)>0$ and $E_i\cap E_j=\emptyset$ for $i\neq j$ (see the paragraph before \cite[Corollary II.1.3]{DU}). Consider 
    \[
    h_1:= \sum\limits_{i=1}^{\infty} y_i \chi_{E_i}, \,\text{ where }\, y_i : = \frac{1}{\mu(E_i)}\int_{E_i} g\,d\mu = T\left(\frac{\chi_{E_i}}{\mu(E_i)}\right), \, \forall i \in \mathbb{N}.
    \]
    Then, for every $i\in \mathbb{N}$ we have that
   \begin{equation*}
   \begin{split}
        \|h_1-h\|_{E_i} &=\left\|\frac{1}{\mu(E_i)}\int_{E_i} g d\mu -z_i\right\| =\left\|\frac{1}{\mu(E_i)}\int_{E_i} (g-h) d\mu\right\| 
    \leq \frac{\varepsilon}{6}.
   \end{split}   
   \end{equation*}
   This shows that $\esssup \|h_1-h\|_Y \leq\frac{\varepsilon}{6}$; therefore $\esssup \|g-h_1\|_Y <\frac{\varepsilon}{3}$. 
   
   Now, define $T_1\in \mathcal{L}(L_1(\mu), Y)$ by
   \begin{equation*}
       T_1(f)=\int_{\Omega} f h_1 \, d\mu, \quad \forall f \in L_1 (\mu).
   \end{equation*}
   Then, we have that $\|T_1-T\|<\frac{\varepsilon}{3}$ and $$\overline{T_1(B_{L_1 (\mu)})}=\overline{\aconv}\{y_n \colon n\in \mathbb{N}\}\subseteq \overline{T(B_{L_1 (\mu)})}.$$
   {Indeed, notice firstly that $y_n = T\left(\frac{\chi_{E_n}}{\mu(E_n)}\right) = T_1\left(\frac{\chi_{E_n}}{\mu(E_n)}\right)$ for each $n \in \mathbb{N}$. Secondly, observe that if $f\in B_{L_1(\mu)}$ one has }
    \begin{equation*}
        T_1(f)=\sum\limits_{i=1}^{\infty} y_i \int_{E_i}f d\mu 
    \end{equation*}
    and $\sum\limits_{i=1}^{\infty}\left|\int_{E_i} f\,d\mu \right| \leq \int_\Omega |f| d\mu\leq 1$, so we have $T_1(f)\in \overline{\aconv}\{y_n \colon n\in \mathbb{N}\}$. This, in particular, shows that $T_1$ is a strong RN operator.  
   
  On the other hand, we can assume that $T_1\neq 0$ (otherwise the proof would be trivial) and let $\beta=\sup \|y_i\| >0$. We can choose $i_0$ such that $\beta - \|y_{i_0}\|<\varepsilon/4$ and $\alpha >1$ such that $$\varepsilon/4 < (\alpha -1)\|y_{i_0}\|<\varepsilon/3.$$ 
  Observe that 
  \begin{equation}\label{eq:assumption_on_beta_alpha}
      \beta < \|y_{i_0}\| + \frac{\varepsilon}{4} < \alpha \|y_{i_0}\|.
  \end{equation}
  Next, define $T_2 \in \Lin (L_1 (\mu), Y)$ by
\begin{equation*}
    T_2(f) :=\int_{\Omega} fh_2 \, d\mu, \,\, \forall f\in L_1 (\mu), \, \text{ where } \,         h_2 := \sum_{i\neq i_0} y_i \chi_{E_i}+\alpha y_{i_0}\chi_{E_{i_0}}.
\end{equation*}
Since $(\alpha-1)\|y_{i_0} \| < \varepsilon/3$, we obtain that $\|T_1-T_2\|<\varepsilon/3$. Note also that $T_2(\chi_{E_{i_0}}/\mu(E_{i_0}))=\alpha y_{i_0}$. Arguing as before, observe that 
    \[
    \overline{T_2(B_{L_1 (\mu)})}=\overline{\aconv}(\{y_i \colon i\neq i_0 \}\cup \{\alpha y_{i_0}\}) \subseteq \alpha \,  \overline{T_1(B_{L_1 (\mu)})},
    \]
so $\|T_2\|=\alpha \|y_{i_0}\|=\|T_2(\chi_{E_{i_0}}/\mu(E_{i_0}))\|$, and $T_2$ is a strong RN operator.
    
By Lemma \ref{lem:strexp+density}, if we prove that $\alpha y_{i_0}$ is a strongly exposed point of $\overline{T_2(B_{L_1 (\mu)})}$ we will have that $T_2\in \overline{\RSE(L_1(\mu), Y)}$, which gives us an $S\in \RSE(L_1(\mu),Y)$ such that $\|T_2-S\|<\varepsilon/3$ and therefore $\|T-S\|<\varepsilon$, finishing the proof. 

Since $\overline{T_2(B_{L_1(\mu)})}$ is an RNP set, it is the closed convex hull of its strongly exposed points (see \cite[Theorem 3.5.4]{bourgin}). Therefore, there exists a strongly exposed point $z\in \overline{T_2(B_{L_1 (\mu)})}$ such that 
    \[
    \|z\|> \|y_{i_0}\| + \frac{\varepsilon}{4}.
    \] 
By Lemma \ref{lem:2}, the point $z$ belongs to the set $\overline{\mathbb{T}(\{y_i \colon i\neq i_0 \}\cup \{\alpha y_{i_0}\})}$. However, by the inequality \eqref{eq:assumption_on_beta_alpha}, the only possibility is $z\in  \mathbb{T}\{\alpha y_{i_0}\}$. In particular, $y_{i_0}$ is a strongly exposed point, as we wanted.

Finally, the moreover part follows since 
$$
\overline{T_2(B_{L_1 (\mu)})}\subseteq \alpha\overline{T_1(B_{L_1 (\mu)})} \subseteq \alpha \overline{T(B_{L_1 (\mu)})},
$$
and the operator $S$ is obtained from $T_2$ using Lemma~\ref{lem:strexp+density} and it is shown in Remark~\ref{remark:comment-to-lemma26} that $S$ can be taken in such a way that 
$$
\overline{S(B_{L_1 (\mu)})}\subseteq \alpha'\overline{T_2 (B_{L_1 (\mu)})}.
$$
Therefore, $\beta=\alpha\alpha'$ works.
\end{proof}

We would like to emphasize two consequences of Theorem~\ref{thm:L1}. The first one follows immediately since every operator whose range space has the RNP is obviously a strong RN operator; the second one follows since weakly compact operators are strong RN and we can use the moreover part of the theorem to get that the approximating operators can be taken to be weakly compact.

\begin{corollary}\label{corollary:domainL1-RNP}
Let $(\Omega,\Sigma, \mu)$ be a finite measure space and let $Y$ be a Banach space with the RNP. Then, $\RSE (L_1(\mu),Y)$ is dense in $\mathcal{L}(L_1(\mu),Y)$.
\end{corollary}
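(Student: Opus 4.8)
The plan is to reduce the statement directly to Theorem~\ref{thm:L1}, whose conclusion is exactly the denseness of $\RSE(L_1(\mu),Y)$ near any operator that happens to be a strong RN operator. Thus the entire task is to verify that, when $Y$ has the RNP, \emph{every} operator $T \in \mathcal{L}(L_1(\mu),Y)$ is automatically a strong RN operator. Once this is checked, Theorem~\ref{thm:L1} applies to an arbitrary $T$ and yields, for each $\varepsilon>0$, an operator $S \in \RSE(L_1(\mu),Y)$ with $\|T-S\|\leq\varepsilon$, which is precisely the asserted denseness.

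For the verification I would recall from the definitions that $T$ is a strong RN operator exactly when $\overline{T(B_{L_1(\mu)})}$ is an RNP set, and that $Y$ has the RNP if and only if $B_Y$ is an RNP set. The key observation is that being an RNP set is inherited by closed convex subsets: if $D$ is an RNP set and $D'\subseteq D$ is closed and convex, then every subset $A\subseteq D'$ is in particular a subset of $D$, hence $A$ contains slices of arbitrarily small diameter, so $D'$ is again an RNP set. Since $\overline{T(B_{L_1(\mu)})}$ is a closed convex subset of $\|T\|B_Y$, and $\|T\|B_Y$ is an RNP set (a nonnegative scalar multiple of the RNP set $B_Y$, the case $T=0$ being trivial), we conclude that $\overline{T(B_{L_1(\mu)})}$ is an RNP set. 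Therefore $T$ is a strong RN operator.

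Combining the two steps finishes the argument, and there is essentially no obstacle: the corollary is immediate once one records that the RNP of the range is inherited by the closed convex hull of the image of the unit ball, which is exactly the hypothesis under which Theorem~\ref{thm:L1} was stated. One could alternatively invoke Lemma~\ref{lem:representable} to note that, since $Y$ has the RNP with respect to $(\Omega,\Sigma,\mu)$, every $T$ is representable and hence an RN operator; however, that route only gives the (weaker) RN property, whereas Theorem~\ref{thm:L1} requires the \emph{strong} RN property, so the direct dentability argument above is the cleaner and more economical choice.
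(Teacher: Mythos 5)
Your proposal is correct and follows exactly the paper's route: the paper derives this corollary from Theorem~\ref{thm:L1} by noting that every operator into a space with the RNP is ``obviously'' a strong RN operator, and your dentability argument (that the closed convex set $\overline{T(B_{L_1(\mu)})}\subseteq \|T\|B_Y$ inherits the RNP-set property, since any subset of it is a subset of the RNP set $\|T\|B_Y$) is a valid spelling-out of that claimed-obvious step. Your closing remark is also right that the representability route via Lemma~\ref{lem:representable} would only give the weaker RN property and thus would not suffice for Theorem~\ref{thm:L1}.
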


\begin{corollary}\label{corollary:domainL1-weaklycompact}
Let $(\Omega,\Sigma, \mu)$ be a finite measure space and let $Y$ be a Banach space. Then,
$$\mathcal{W}(L_1(\mu),Y)=\overline{\mathcal{W}(L_1(\mu),Y)\cap \RSE (L_1(\mu),Y)}.$$
\end{corollary}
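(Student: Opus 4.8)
The plan is to deduce this corollary directly from Theorem~\ref{thm:L1}, exploiting two facts: that every weakly compact operator is a strong RN operator, and that the approximating operator produced by Theorem~\ref{thm:L1} satisfies the geometric containment in its ``moreover'' part, which is exactly what forces it to stay weakly compact.

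First I would dispose of the easy inclusion $\overline{\mathcal{W}(L_1(\mu),Y)\cap \RSE(L_1(\mu),Y)} \subseteq \mathcal{W}(L_1(\mu),Y)$, which is immediate from the fact that $\mathcal{W}(L_1(\mu),Y)$ is a norm-closed subspace of $\ELL(L_1(\mu),Y)$. For the reverse inclusion, fix $T \in \mathcal{W}(L_1(\mu),Y)$ and $\varepsilon>0$. Since $T$ is weakly compact, $T(B_{L_1(\mu)})$ is relatively weakly compact, so $K := \overline{T(B_{L_1(\mu)})}$ is a convex, norm-closed (hence weakly closed) weakly compact set. Every weakly compact convex set is an RNP set, so $K$ is an RNP set and therefore $T$ is a strong RN operator. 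Theorem~\ref{thm:L1} then yields $S \in \RSE(L_1(\mu),Y)$ with $\|T-S\|\leq \varepsilon$ together with a constant $\beta>0$ such that $\overline{S(B_{L_1(\mu)})} \subseteq \beta K$.

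It then remains to verify that $S$ itself is weakly compact. The set $\beta K$ is weakly compact, and $\overline{S(B_{L_1(\mu)})}$ is a norm-closed convex subset of it, hence weakly closed; being a weakly closed subset of a weakly compact set, it is weakly compact. Consequently $S(B_{L_1(\mu)})$ is relatively weakly compact, i.e.\ $S \in \mathcal{W}(L_1(\mu),Y)$. Thus $S \in \mathcal{W}(L_1(\mu),Y) \cap \RSE(L_1(\mu),Y)$ with $\|T-S\|\leq \varepsilon$, which establishes $\mathcal{W}(L_1(\mu),Y)\subseteq \overline{\mathcal{W}(L_1(\mu),Y)\cap \RSE(L_1(\mu),Y)}$ and completes the proof.

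The only point requiring care is this last weak-compactness transfer, and it rests entirely on the containment $\overline{S(B_{L_1(\mu)})}\subseteq \beta K$ supplied by the moreover clause of Theorem~\ref{thm:L1}: without that geometric control one could guarantee the norm approximation but could not certify that the approximants remain inside the ideal $\mathcal{W}$. Beyond invoking that weakly compact convex sets have the RNP and that weakly closed subsets of weakly compact sets are weakly compact, I anticipate no substantive obstacle.
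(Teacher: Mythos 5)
Your proof is correct and follows exactly the paper's intended argument: the authors derive this corollary from Theorem~\ref{thm:L1} in precisely the same way, noting that weakly compact operators are strong RN operators and using the ``moreover'' containment $\overline{S(B_{L_1(\mu)})}\subseteq \beta\,\overline{T(B_{L_1(\mu)})}$ to keep the approximant weakly compact. The only remark is that your weak-closedness step is not even needed, since any subset of a weakly compact set is already relatively weakly compact.
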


The analogous result to this last one for compact operators also holds and can be proved in the same way, but we will also get it with a completely different argument in Corollary \ref{corollary--bigoneforfiniterank-compact-fordomainspaces}.

Our next aim is to show that, actually, Corollary \ref{corollary:domainL1-RNP} characterizes the RNP of the range space $Y$ when the measure $\mu$ is not purely atomic (of course, if the measure $\mu$ is purely atomic, then $L_1(\mu)$ has the RNP and, actually, $\ASE$ operators are dense for all $Y$'s).

\begin{theorem}\label{teo:uhl}
Let $(\Omega, \Sigma,\mu)$ be a finite measure space which is not purely atomic and let $Y$ be a Banach space. If $\RSE(L_1(\mu), Y)$ is dense in $\ELL(L_1(\mu), Y)$, then $Y$ has the RNP.
\end{theorem}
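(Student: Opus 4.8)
The plan is to prove the contrapositive: assuming $Y$ \emph{fails} the RNP, construct an operator $T \in \Lin(L_1(\mu), Y)$ that cannot be approximated by $\RSE$ operators. Since $\mu$ is finite and not purely atomic, by the characterization recalled in the introduction (\cite[Theorem~2]{Chatterji}, \cite[Corollary~V.3.8]{DU}), the failure of the RNP of $Y$ is detected with respect to $\mu$ itself: there is a $\mu$-continuous vector measure $G\colon \Sigma \to Y$ of bounded variation with \emph{no} Bochner integrable Radon--Nikod\'ym derivative. Equivalently, there is a bounded linear operator $T\colon L_1(\mu) \to Y$ which is \emph{not} representable, i.e.\ not an RN operator (Lemma~\ref{lem:representable}). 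The goal is then to show that this single $T$ stays a fixed positive distance away from $\RSE(L_1(\mu),Y)$.

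The key mechanism is that $\RSE$ operators are very rigid on $L_1(\mu)$ because of the abundance of ``$\pm z$'' perturbations available there. Concretely, I would exploit Lemma~\ref{lemma:easy-lemma-to-kill-propertyRSE-B}: if $S \in \RSE(L_1(\mu),Y)$ attains its norm at some $f_0 \in S_{L_1(\mu)}$, and $\|f_0 + z\|, \|f_0 - z\| \le 1$, then $Sz = 0$. In $L_1$ of a non-purely-atomic measure one can split the support of $f_0$ into many disjoint pieces and, on the atomless part, find directions $z$ supported off $f_0$ (or balancing its mass) with $\|f_0 \pm z\|_1 \le 1$; this forces $S$ to annihilate a large subspace. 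The plan is to show that such rigidity forces any $\RSE$ operator $S$ to be representable, or at least to have the structural regularity (essentially, to factor through a ``nice'' piece) that a non-representable $T$ lacks, so that $\|T - S\|$ is bounded below. The natural quantitative route is to combine Lemma~\ref{lemma:easy-lemma-to-kill-propertyRSE-B} with Lemma~\ref{lem:strexp}: the norm-attainment point $Sf_0$ is a strongly exposed point of $\overline{S(B_{L_1(\mu)})}$, and I would leverage that $\overline{S(B_{L_1})}$ is then forced to have strongly exposed structure that an approximation of $T$ would have to mimic, contradicting non-representability.

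More precisely, I would argue as follows. Suppose toward a contradiction that $\RSE(L_1(\mu),Y)$ is dense; then $T$ is a limit of $S_k \in \RSE(L_1(\mu),Y)$. Each $S_k$ attains its norm at some $f_k \in S_{L_1(\mu)}$, and Lemma~\ref{lemma:easy-lemma-to-kill-propertyRSE-B} confines the ``interesting'' behaviour of $S_k$ to a small piece of $\Omega$ (the atoms together with the support of $f_k$), on which $S_k$ looks like a finitely-supported, hence representable, operator. I would then show that this representability passes to suitable uniform limits in operator norm, or that any operator within $\varepsilon$ of $T$ inherits enough of this structure to contradict the fact that $T$ is not an RN operator. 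A clean way to package the contradiction is: the set of RN (representable) operators is closed in operator norm (this follows from completeness of $L_\infty(\mu,Y)$ under $\esssup$ and the representation correspondence in Lemma~\ref{lem:representable}), and the rigidity just described shows $\RSE(L_1(\mu),Y)$ consists of (or is approximable within itself by) representable operators; so its closure cannot contain the non-representable $T$.

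The main obstacle I anticipate is the step connecting the pointwise rigidity of a \emph{single} $\RSE$ operator to a \emph{uniform} obstruction separating $T$ from the whole class. Lemma~\ref{lemma:easy-lemma-to-kill-propertyRSE-B} tells us each $\RSE$ operator kills many directions, but the killed subspace depends on the attainment point $f_k$, which may vary wildly with $k$; turning this into a fixed lower bound on $\|T - S\|$ requires controlling how much of the atomless part of $\Omega$ must be annihilated \emph{independently} of $f_k$. The right technical device is probably a Lyapunov/Dvoretzky--Rogers style argument (mirroring the proof of Theorem~\ref{theorem:no-infdim-hasRSE-B}) that produces, from the non-representability of $T$, a bounded sequence of disjointly supported normalized directions $z_n$ on which $T$ acts with norm bounded below while \emph{every} $\RSE$ operator must vanish on all but finitely many of them. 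Establishing that last ``vanishing on cofinitely many $z_n$'' uniformly over the attainment point $f_k$ is the crux, and I would expect it to be the hardest and most delicate part of the argument.
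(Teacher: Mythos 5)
Your overall plan has a structural flaw that no amount of technical work can repair: you fix a \emph{single} non-representable $T$ and try to bound $\dist\bigl(T,\RSE(L_1(\mu),Y)\bigr)$ from below, but such a bound cannot exist, because a non-representable operator can itself be $\RSE$. Concretely, choose a measurable $E$ with $\mu(E)>0$ such that $\mu$ restricted to $E^c:=\Omega\setminus E$ is still not purely atomic; if $Y$ fails the RNP, then by \cite[Theorem~2]{Chatterji} it fails the RNP with respect to $\mu\restricted_{E^c}$, so by Lemma~\ref{lem:representable} there is a non-representable $S\in\Lin(L_1(E^c),Y)$, which we scale so that $\|S\|<1$. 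Fix $y_0\in S_Y$ and set $Tf=\bigl(\int_E f\,d\mu\bigr)y_0+S(f\chi_{E^c})$. Then $\|T\|=1$ is attained at $x_0=\chi_E/\mu(E)$, and if $(f_n)\subseteq B_{L_1(\mu)}$ satisfies $\|Tf_n\|\to 1$, the estimate $\|Tf_n\|\leq 1-(1-\|S\|)\|f_n\chi_{E^c}\|_1$ forces $\|f_n\chi_{E^c}\|_1\to 0$ and $\bigl|\int_E f_n\,d\mu\bigr|\to 1$, whence $Tf_n\to\theta y_0=\theta Tx_0$ along a subsequence: $T\in\RSE(L_1(\mu),Y)$, yet $T$ is not representable since its restriction to $L_1(E^c)$ is $S$. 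The same example refutes your packaging ``$\RSE$ operators are representable, or approximable within $\RSE$ by representables'' (the closedness of representable operators that you invoke is correct, and is indeed used in the paper's last step, but the inclusion is false). The underlying reason is that the rigidity you extract from Lemma~\ref{lemma:easy-lemma-to-kill-propertyRSE-B} is only \emph{local}: in $L_1$, a perturbation $z$ disjointly supported from a norm-one $f_0$ always satisfies $\|f_0\pm z\|_1=1+\|z\|_1$, so admissible symmetric perturbations live only on $\supp f_0$, which may have arbitrarily small measure, and off $\supp f_0$ the $\RSE$ operator is unconstrained. This is also why the Dvoretzky--Rogers\,/\,$X(w)$ mechanism of Theorem~\ref{theorem:no-infdim-hasRSE-B}, where \emph{every} norm-one point admits $\pm\delta e_n$ perturbations, does not transfer to $L_1(\mu)$; your hoped-for ``vanishing on cofinitely many disjoint $z_n$, uniformly in the attainment point'' is exactly what fails.

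The paper's proof repairs this gap by using the density hypothesis not once, on $T$, but \emph{repeatedly}, on smaller and smaller pieces. Its engine is Lemma~\ref{lem:rse} (proved via strongly exposing functionals, Lemma~\ref{lem:strexp}, and close in spirit to your rigidity observation): an operator in $\RSE(L_1(\mu),Y)$ acts as a rank-one representable operator on $L_1(E_0)$ for \emph{some} $E_0$ of positive measure---and no more than that. One then fixes an arbitrary $T$ and $\eps>0$, defines the family $\mathcal{M}$ of sets on which $T$ is $\eps$-approximately representable, and runs an exhaustion: if $\alpha=\sup\{\mu(E)\colon E\in\mathcal{M}\}$ were smaller than $\mu(\Omega)$, the leftover set $B_0$ yields an $L$-summand $L_1(B_0)$, the density of $\RSE$ passes down to it by Lemma~\ref{lemma:goingdown-Lsummand} (note that this re-invokes the density hypothesis for extensions of $T\restricted_{L_1(B_0)}$, not for $T$ itself---which is precisely what a single-operator contrapositive cannot capture), and Lemma~\ref{lem:rse} produces a new positive-measure member of $\mathcal{M}$ inside $B_0$, contradicting maximality. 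Letting $\eps\to 0$ and using the closedness of representable operators (your Cauchy/$\esssup$ argument, which matches the paper's) shows every $T$ is representable, so $Y$ has the RNP with respect to $\mu$ by Lemma~\ref{lem:representable}, and the RNP by Chatterji's theorem since $\mu$ is not purely atomic. In short: your endgame and your rigidity intuition are sound, but the fixed-$T$ separation scheme must be replaced by this localization-and-exhaustion use of the density hypothesis.
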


For the proof of Theroem \ref{teo:uhl}, we need the following preliminary result from measure theory which we highlight here for the reader's convenience. 

\begin{lem}[\mbox{Proof of \cite[Lemma III.1.4]{DU}}] \label{lem:bochner} 
If $(\Omega, \Sigma, \mu)$ is a finite measure space and $g\colon \Omega \to Y$ is a essentially bounded Bochner integrable function, then
\begin{equation*}
        T(f)=\int_{\Omega} fg \, d\mu \qquad \bigl(f\in L_1(\mu)\bigr)
\end{equation*}
defines a bounded linear operator from $L_1 (\mu)$ to $Y$ with $\|T\|=\essup \|g\|_Y$.
\end{lem}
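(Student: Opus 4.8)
The plan is to verify the three requirements in turn — well-definedness together with boundedness, the estimate $\|T\|\le\essup\|g\|_Y$, and the reverse estimate $\|T\|\ge\essup\|g\|_Y$ — with essentially all of the work concentrated in the last one. Write $M:=\essup\|g\|_Y$. If $M=0$ then $g=0$ almost everywhere, $T=0$, and there is nothing to prove, so I assume $M>0$.

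For the first two points I would note that, for $f\in L_1(\mu)$, the product $fg$ is strongly measurable (the scalar function $f$ is measurable and $g$ is Bochner measurable) and satisfies $\|fg\|_Y=|f|\,\|g\|_Y\le M|f|$ almost everywhere; hence $fg$ is Bochner integrable with $\int_\Omega\|fg\|_Y\,d\mu\le M\|f\|_1$. Thus $T(f)$ is well defined, linearity is immediate, and the triangle inequality for the Bochner integral yields $\|T(f)\|_Y\le\int_\Omega\|fg\|_Y\,d\mu\le M\|f\|_1$, that is, $\|T\|\le M$.

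The reverse inequality is the only genuine obstacle, because $Y$ is an arbitrary Banach space and $g$ may point in truly different directions on different parts of $\Omega$, so a careless choice of $f$ could produce cancellation inside the vector-valued integral. To control this I would localize and use Hahn--Banach. Fix $0<\eta<M/2$ and set $A:=\{\omega:\|g(\omega)\|_Y>M-\eta\}$, which has $\mu(A)>0$ by definition of the essential supremum. Since $g$ is strongly measurable it is essentially separably valued, so its essential range is covered by countably many balls of radius $\eta/2$; consequently, up to a null set, $A$ is the countable union of the sets $\{\omega\in A:\|g(\omega)-v\|_Y<\eta/2\}$ as $v$ runs over a countable dense subset of that range. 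One of these, call it $B$ with center $v$, must have $\mu(B)>0$. Choosing $y^*\in S_{Y^*}$ with $\re y^*(v)=\|v\|_Y$ (possible since every $\omega\in B$ gives $\|v\|_Y>M-3\eta/2>0$), and using $\|g(\omega)\|_Y>M-\eta$ together with $\|g(\omega)-v\|_Y<\eta/2$ for $\omega\in B$, I obtain $\re y^*(g(\omega))\ge\|v\|_Y-\|g(\omega)-v\|_Y>M-2\eta$ for every $\omega\in B$.

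Finally I would test $T$ against $f:=\mu(B)^{-1}\chi_B$, which satisfies $\|f\|_1=1$. Since $y^*$ is continuous and linear, it passes through the Bochner integral, giving
\[
\|T(f)\|_Y\ge\re y^*\bigl(T(f)\bigr)=\frac{1}{\mu(B)}\int_B\re y^*(g)\,d\mu>M-2\eta.
\]
Hence $\|T\|>M-2\eta$ for every $\eta\in(0,M/2)$, and letting $\eta\to0$ yields $\|T\|\ge M$; combined with the earlier bound this gives $\|T\|=M=\essup\|g\|_Y$, as claimed. The two ingredients I would emphasize are the essential separable-valuedness of a strongly measurable function, which legitimizes the countable covering that pins $g$ to a single near-constant direction on a set of positive measure, and the Hahn--Banach functional $y^*$, which converts a lower bound on $\|g\|_Y$ into one that survives integration.
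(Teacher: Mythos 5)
Your proof is correct and follows essentially the same route as the paper's cited source (the proof of Lemma III.1.4 in Diestel--Uhl): the easy bound $\|T\|\le\esssup\|g\\|_Y$ via the triangle inequality for the Bochner integral, and the reverse bound by using essential separable-valuedness to locate a set $B$ of positive measure where $g$ is nearly constant with near-maximal norm, then testing $T$ against $\mu(B)^{-1}\chi_B$. The only cosmetic difference is that you pass through a Hahn--Banach norming functional $y^*$ for the approximate value $v$, whereas the classical argument bounds $\bigl\|\mu(B)^{-1}\int_B g\,d\mu\bigr\|$ directly by noting the average of points within $\eta/2$ of $v$ stays within $\eta/2$ of $v$; both steps are equally valid.
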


The next preliminary result allows to pass the denseness of $\RSE$ operators from de domain space to an $L$-summand. The analogous results for other types of norm-attaining operators are well-known.

\begin{lem}\label{lemma:goingdown-Lsummand}
Let $X$, $Y$ be Banach spaces and suppose that $M$ is an $L$-summand of $X$ (that is, $X=M\oplus_1 N$ for some closed subspace $N$ of $X$). If $\RSE(X,Y)$ is dense in $\ELL(X,Y)$, then $\RSE(M,Y)$ is dense in $\ELL(M,Y)$.
\end{lem}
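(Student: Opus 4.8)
The plan is to reduce the problem to the denseness hypothesis on $X$ by extending operators from $M$ to all of $X$ through the canonical $L$-projection, and then restricting a nearby $\RSE$ operator back to $M$. Concretely, fix $T_0\in\ELL(M,Y)$ with (after normalizing) $\|T_0\|=1$, and fix $\varepsilon\in(0,1/2)$. Let $P\colon X\to M$ be the projection with kernel $N$; since $X=M\oplus_1 N$ one has $\|P\|\le 1$, so the extension $T:=T_0\circ P\in\ELL(X,Y)$ satisfies $\|T\|=\|T_0\|=1$, $T|_M=T_0$, and $T|_N=0$. Using the denseness of $\RSE(X,Y)$, I would pick $S\in\RSE(X,Y)$ with $\|S-T\|<\varepsilon$ and set $S_0:=S|_M$. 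Since $B_M\subseteq B_X$ and $S_0m=Sm$ for $m\in M$, we immediately get $\|S_0-T_0\|<\varepsilon$; the whole task is therefore to verify that $S_0\in\RSE(M,Y)$.

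First I would record two size estimates. Because $T|_N=0$, for $n\in B_N\subseteq B_X$ we get $\|Sn\|=\|Sn-Tn\|<\varepsilon$, so $\|S|_N\|\le\varepsilon$; on the other hand $\|S\|\ge\|T\|-\varepsilon>1-\varepsilon>\varepsilon$, hence $\|S\|>\|S|_N\|$. Next, by Proposition~\ref{prop:una=na+uqna} the operator $S$ attains its norm, say at $x_0=m_0+n_0\in S_X$ with $m_0\in M$ and $n_0\in N$. The inequality $\|Sx_0\|=\|S\|>\|S|_N\|$ forces $m_0\neq 0$. The crucial step is then to locate a norm-attaining point of $S_0$ inside $S_M$ at which the limiting behaviour of $S$ is controlled: by Lemma~\ref{lem:strexp}(b), $Sx_0$ is a strongly exposed, hence extreme, point of $\overline{S(B_X)}$. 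Writing $a:=m_0/\|m_0\|\in S_M$ and, when $n_0\neq 0$, $b:=n_0/\|n_0\|\in S_N$, the $L$-decomposition gives $x_0=\|m_0\|a+\|n_0\|b$ with $\|m_0\|+\|n_0\|=1$, so $Sx_0$ is a proper convex combination of $Sa,Sb\in\overline{S(B_X)}$; extremality forces $Sa=Sx_0$ (and if $n_0=0$ then $a=x_0$ and this is trivial). In particular $\|S_0\|\ge\|Sa\|=\|S\|\ge\|S_0\|$, so $\|S_0\|=\|S\|$ and $S_0$ attains its norm at $a$.

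Finally I would check the defining property of $\RSE(M,Y)$ with witness $a$. Given $(u_k)\subseteq B_M$ with $\|S_0u_k\|\to\|S_0\|=\|S\|$, view $(u_k)\subseteq B_X$ and note $\|Su_k\|=\|S_0u_k\|\to\|S\|$; since $S\in\RSE(X,Y)$ at $x_0$, some subsequence satisfies $Su_{\sigma(k)}\to\theta Sx_0$ for a $\theta\in\mathbb{T}$. Because $Su_{\sigma(k)}=S_0u_{\sigma(k)}$ and $Sx_0=Sa=S_0a$, this reads $S_0u_{\sigma(k)}\to\theta S_0a$, which is exactly the $\RSE$ condition for $S_0$ at $a$. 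The main obstacle is precisely this middle step: a priori the restriction of an $\RSE$ operator need not be $\RSE$, and the limit point $Sx_0$ furnished by the $\RSE$ property of $S$ need not have the form $S_0a$ for some $a\in S_M$. Both difficulties are overcome by combining the smallness of $\|S|_N\|$ (which rules out that $S$ attains its norm on the $N$-side) with the extremality of the strongly exposed point $Sx_0$ and the additivity of the norm in the $L$-decomposition, which together pin the witness down to the normalized $M$-component $a$.
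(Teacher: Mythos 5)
Your proof is correct, and it follows the paper's overall reduction exactly: extend $T_0$ to $X$ by composing with the canonical $L$-projection, approximate the extension by some $S\in\RSE(X,Y)$, and restrict back to $M$. Where you genuinely diverge is in the verification that the restriction $S_0=S\restricted_M$ is $\RSE$. The paper computes $\|S\restricted_M\|=\|S\|$ from the max-formula $\|S\|=\max\{\|S\restricted_M\|,\|S\restricted_N\|\}$ valid for $\ell_1$-sums, and then transfers the $\RSE$ property rather tersely, writing the limit of images of a norm-approximating sequence in $B_M$ as $\theta\, S_0(m_0)$ without explicitly justifying that the limit $\theta\, S(m_0+n_0)$ furnished by the $\RSE$ property of $S$ has this form with a witness in $S_M$; your extremality argument (via Lemma~\ref{lem:strexp}, strongly exposed $\Rightarrow$ extreme, applied to $S x_0=\|m_0\|Sa+\|n_0\|Sb$) supplies precisely this missing identification, so your route is the more explicit one. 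That said, your case $n_0\neq 0$ is in fact vacuous, and seeing why gives a shorter argument closer to what the paper implicitly uses: if $n_0\neq 0$, then by additivity of the norm on $M\oplus_1 N$ one gets $\|Sx_0\|\leq \|Sm_0\|+\|Sn_0\|\leq \|S\|\,\|m_0\|+\eps\,\|n_0\|<\|S\|\bigl(\|m_0\|+\|n_0\|\bigr)=\|S\|$, contradicting norm attainment at $x_0$; hence $n_0=0$, $x_0=m_0\in S_M$, and $Sx_0=S_0x_0$ directly, with no appeal to strong exposedness needed. Your deduction in that case is still formally valid (extremality does force $Sa=Sx_0$), so nothing is wrong---the extremality detour simply handles a situation the $L$-structure already rules out. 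One small point worth making explicit: you invoke the $\RSE$ property of $S$ ``at $x_0$'' for an arbitrarily chosen norm-attaining point $x_0$, which is legitimate because, as the proof of Proposition~\ref{prop:una=na+uqna} shows, any norm-attaining point of an operator in $\NA\cap\UQNA$ serves as an $\RSE$ witness (apply the $\UQNA$ condition to the constant sequence); a one-line remark to this effect would make the step airtight.
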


\begin{proof} Consider $T\in \ELL(M,Y)$ and define $\tilde{T}\in \ELL(X,Y)$ by $\tilde{T}(m,n)=T(n)$ for $(n,m)\in X=M\oplus_1 N$, which satisfies that $\|\tilde{T}\|=\|T\|$. For a given $\eps>0$ with $\eps<\|T\|$, we find $\tilde{S}\in \RSE(X,Y)$ witnessed by $(m_0,n_0)\in S_{X}$ such that $\|\tilde{T}-\tilde{S}\|<\eps$, and define $S\in \ELL(M,Y)$ by $S(m)=\tilde{S}(m,0)$ for $m\in M$ which clearly satisfies that $\|T-S\|<\eps$. Then, $\|S\|=\|\tilde{S}\|$. Indeed, the inequality $\|S\|\leq \|\tilde{S}\|$ is immediate and, conversely, 
$$
\|\tilde{S}\|=\max\{\|\tilde{S}\restricted_M\|,\|\tilde{S}\restricted_N\|\} \leq \max\{\|S\|,\|\tilde{T}\restricted_N\| + \|\tilde{T}-\tilde{S}\|\}=\|S\|.
$$
Now, if we have a sequence $(m_k)$ in $B_M$ such that $\|S(m_k)\|\to \|S\|$, then $$\|\tilde{S}(S)(m_k,0)\|\to \|\tilde{S}\|$$ and, therefore, there is a subsequence $(m_{\sigma(k)})$ and $\theta\in \mathbb{T}$ such that $$\tilde{S}(m_{\sigma(k)},0)\to \theta \tilde{S}(m_0,n_0).$$ In particular, $S(m_{\sigma(k)})\to \theta S(m_0)$. This shows that $S\in \RSE(M,Y)$, finishing the proof.
\end{proof}

The next result include the technical details that we need to use in the proof of Theorem~\ref{teo:uhl}. It provides the  ``$\RSE$ version" of \cite[Lemma 2]{Uhl}, but there is no need to assume that the range space is strictly convex.

\begin{lem}\label{lem:rse}
Let $(\Omega, \Sigma, \mu)$ be a finite measure space and let $Y$ be a Banach space. If $T\in\RSE(L_1 (\mu), Y)$, then there exists a set $E_0\in \Sigma$ with $\mu(E_0)>0$, $g\in L_{\infty}(\mu)$ with $|g|=1$ on $E_0$ and $y_0\in Y$ with $\|y_0\|=\|T\|$ such that
    \begin{equation*}
        T(f\chi_{E_0})=\left(\int_{E_0} fg \, d\mu\right) y_0, \, \text{ } \forall f\in L_1(\mu).
    \end{equation*}
\end{lem}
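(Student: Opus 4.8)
The plan is to exploit the $\RSE$ property to pin down both a unique (up to rotation) ``strong maximum value'' $y_0\in\|T\|S_Y$ and a functional $g\in L_\infty(\mu)$ that governs how every $f\in L_1(\mu)$ aligns with the norm-attaining direction. Since $T\in\RSE(L_1(\mu),Y)$, let $f_0\in S_{L_1(\mu)}$ be a point where $\|Tf_0\|=\|T\|$, set $y_0:=Tf_0/\|f_0\|$ adjusted so $\|y_0\|=\|T\|$, and by Lemma~\ref{lem:strexp}(b) pick $y_0^*\in S_{Y^*}$ with $\re y_0^*(Tf_0)=\|T\|$ that strongly exposes $\overline{T(B_{L_1(\mu)})}$ at $Tf_0$. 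The composition $y_0^*\circ T\in (L_1(\mu))^*=L_\infty(\mu)$, so there is $g\in L_\infty(\mu)$ with $\|g\|_\infty\le 1$ representing it, i.e.\ $y_0^*(Tf)=\int_\Omega fg\,d\mu$ for all $f\in L_1(\mu)$. The normalization forces $\int_\Omega f_0 g\,d\mu=\|T\|=\|g\|_\infty\cdot\|f_0\|_1$, which (by the equality case in Hölder) will show $|g|=1$ a.e.\ on the support $E_0$ of $f_0$, and $\mu(E_0)>0$ since $f_0\ne 0$.

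First I would establish the central claim: for a.e.\ $\omega\in E_0$, the operator $T$ maps mass concentrated near $\omega$ to a rotation of $y_0$. The key tool is Lemma~\ref{lemma:easy-lemma-to-kill-propertyRSE-B}, applied not directly but in spirit through the strongly exposing functional. Concretely, I would argue that if $f\in B_{L_1(\mu)}$ is supported in $E_0$ and satisfies $\re\int_{E_0} fg\,d\mu$ close to $1$, then $\|Tf\|$ is close to $\|T\|$, whence by the $\RSE$ property and the strong exposedness of $Tf_0=y_0$ via $y_0^*$, the image $Tf$ must be close to a rotation of $y_0$. Precisely, any sequence $(f_n)\subseteq B_{L_1(\mu)}$ with $\int f_n g\,d\mu\to\|T\|$ satisfies $\re y_0^*(Tf_n)\to\|T\|$, so strong exposedness gives $Tf_n\to y_0$. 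Using that $|g|=1$ on $E_0$, I would take normalized indicators $f=\bar g\,\chi_A/\mu(A)$ for measurable $A\subseteq E_0$ of positive measure; these are unit vectors with $\int f g\,d\mu=\mu(A)^{-1}\int_A|g|^2\,d\mu$, which equals $1$ on $E_0$, forcing $T(\bar g\chi_A/\mu(A))$ to lie arbitrarily close to $y_0$ as $A$ shrinks. A martingale/differentiation argument then yields $T(\bar g\chi_A)=\mu(A)\,y_0$ for all such $A$, equivalently $T(f\chi_{E_0})=\bigl(\int_{E_0}fg\,d\mu\bigr)y_0$.

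The passage from indicators to the general formula $T(f\chi_{E_0})=\bigl(\int_{E_0}fg\,d\mu\bigr)y_0$ then follows by linearity and density of simple functions in $L_1(\mu)$, combined with the boundedness of $T$ and the fact that both sides are continuous linear functions of $f$. The representation is a strong RN representation restricted to $E_0$: on $E_0$ the representing function must be $g\,y_0$ (up to the identification), which is exactly the rank-one structure we want.

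\textbf{The main obstacle} I anticipate is upgrading the \emph{approximate} conclusion ``$Tf$ is near a rotation of $y_0$ when $\int fg\,d\mu$ is near the norm'' to the \emph{exact} rank-one identity $T(\bar g\chi_A)=\mu(A)y_0$ for \emph{every} measurable $A\subseteq E_0$, not merely in a limiting sense. The delicate point is that the $\RSE$ hypothesis only controls the behavior of norm-approximating sequences, so I must rule out that $T$ does something nontrivial on small-but-fixed pieces of $E_0$. The cleanest route is to fix $A$, consider $f_A=\bar g\chi_A/\mu(A)\in S_{L_1(\mu)}$, observe $\re y_0^*(Tf_A)=\mu(A)^{-1}\int_A|g|^2\,d\mu=1=\|T\|$, so $f_A$ itself (not just a limit) attains the exposing value; strong exposedness of $\overline{T(B_{L_1(\mu)})}$ at $y_0$ then forces $Tf_A=y_0$ exactly, giving $T(\bar g\chi_A)=\mu(A)y_0$ directly with no limiting argument at all. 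This observation—that every such $f_A$ is already a maximizer for $y_0^*$—short-circuits the anticipated difficulty and makes the whole lemma fall out by linearity.
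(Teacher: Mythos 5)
Your proposal is correct and takes essentially the same route as the paper's proof: a strongly exposing functional $y_0^*$ from Lemma~\ref{lem:strexp}(b), the $L_\infty(\mu)$ representation of $y_0^*\circ T$ with the equality case of H\"older identifying $g$ (up to scaling, as $\overline{\sgn}f_0$) on $E_0=\supp f_0$, the observation that each normalized indicator $\bar g\chi_A/\mu(A)$ with $A\subseteq E_0$ \emph{exactly} attains the exposing value so that strong exposedness forces $T(\bar g\chi_A)=\mu(A)\,y_0$ with no limiting argument, and then linearity plus density of simple functions. The only adjustments needed are cosmetic: normalize $\|T\|=1$ (or rescale $g$ at the end) so that your claims $\|g\|_\infty\le 1$ and $|g|=1$ on $E_0$ are literally correct, and note that your final paragraph's exact-maximizer argument supersedes the earlier martingale/differentiation detour, which is unnecessary---this exact argument is precisely what the paper does.
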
 

\begin{proof}
If $\|T\|=0$ there is nothing to prove. Otherwise, let $f_0 \in S_{L_1(\mu)}$ such that $\|Tf_0 \|=\|T\|$ and $y_0^*\in S_{Y^*}$ so that
    \begin{equation*}
        y_0^*(Tf_0)=\|T\|.
    \end{equation*}
    Then, $y_0^*$ strongly exposes $\overline{T(B_{L_1(\mu)})}$ at $Tf_0$. Choose $h\in L_{\infty}(\mu)$ so that $\|h\|_\infty = \|T\|$ and
    \begin{equation*}
        y_0^*(Tf)=\int_{\Omega} fh \, d\mu
    \end{equation*}
    for all $f\in L_1(\mu)$. Let $E_0$ be the support of $f_0$ ($\mu(E_0)>0)$ as $\|f_0\|>0$), then it must be $h=\overline{\sgn} f_0 \|T\|$ on $E_0$, where $\overline{\sgn}f_0(t)=\frac{\overline{f_0}(t)}{|f_0(t)|}$ for $t\in E_0$ and $\overline{\sgn}f_0(t)=0$ elsewhere. Therefore, for $f\in L_1(\mu)$
    \begin{equation*}
        y_0^*(T(f\chi_{E_0}))=\int_{E_0} f \overline{\sgn} f_0 \|T\| \, d\mu.
    \end{equation*}

    Next, suppose that $E\subseteq E_0, E\in \Sigma$ and $\mu(E)>0$. Then 
    \begin{equation*}
        y_0^*T\left(\frac{\chi_E}{\mu(E)}\sgn f_0\right)=\int_{E_0} \frac{\chi_E}{\mu(E)}\sgn f_0\overline{\sgn} f_0 \|T\|d\mu =\|T\|
    \end{equation*}
    and
    \begin{equation*}
        y_0^*T\left(\frac{\chi_{E_0}}{\mu(E_0)}\sgn f_0\right)=\int_{E_0} \frac{\chi_{E_0}}{\mu(E_0)}\sgn f_0\overline{\sgn} f_0 \|T\|d\mu =\|T\|.
    \end{equation*}
    Hence, as $y_0^*$ strongly exposes $\overline{T(B_{L_1(\mu)})}$,  it must be
    \begin{equation}\label{eq:subset_of_E}
        \frac{T\left(\chi_E\sgn f_0\right)}{\mu(E)}=\frac{T\left(\chi_{E_0}\sgn f_0\right)}{\mu(E_0)}=Tf_0:=y_0.
    \end{equation}
    Now, let $f\in L_1(\mu)$ be a simple function, $\eps>0$ and $\varphi\in  L_1(\mu)$ another simple function so that $\|\overline{\sgn} f_0-\varphi\|_{\infty}<\eps$. We have that $T(f\chi_{E_0})=T(f \overline{\sgn} f_0 \sgn f_0)$ and therefore
    \begin{equation*}
    \begin{split}
        \|T(f\chi_{E_0})-T(f\varphi\sgn f_0 \chi_{E_0})\|&\leq \|T\|\|f\overline{\sgn} f_0 \sgn f_0-f\varphi\sgn f_0 \chi_{E_0}\|\\
       & =  \|T\|\int_{E_0} |f| |\overline{\sgn} f_0-\varphi| \, d\mu  < \|T\|\|f\| \eps.
    \end{split}
    \end{equation*}
    Choose $A_1, \dots, A_n\in\Sigma$ disjoint so that 
    \begin{equation*}
        f=\sum_{i=1}^n\alpha_i\chi_{A_i} \text{ and } \varphi=\sum_{i=1}^n\beta_i\chi_{A_i}.
    \end{equation*}
    Then
    \begin{equation*}
    \begin{split}
        T(f\varphi\sgn f_0 \chi_{E_0})&=\sum_{i=1}^n \alpha_i\beta_iT(\chi_{A_i\cap E_0} \sgn f_0)\\
        &=\sum_{\mu (A_i\cap E_0)>0} \alpha_i\beta_i\frac{T(\chi_{A_i\cap E_0} \sgn f_0)}{\mu (A_i\cap E_0)}\mu (A_i\cap E_0)\\
        &=\sum_{\mu (A_i\cap E_0)>0} \alpha_i\beta_i\mu (A_i\cap E_0) y_0 \quad {(\text{by \eqref{eq:subset_of_E}})}\\ 
        &=\left(\int_{E_0}f\varphi d\mu\right)y_0.
    \end{split}
    \end{equation*}
    Hence, for every $\eps>0$ 
    \begin{align*}
            &\left\|T(f\chi_{E_0}) - \left(\int_{E_0}f\overline{\sgn} f_0 d\mu\right) y_0 \right \| \\
            &\leq \left\|T(f\chi_{E_0})-\left(\int_{E_0}f\varphi d\mu\right)y_0 \right\|+ \left\|\left(\int_{E_0}f\varphi d\mu\right)y_0-\left(\int_{E_0}f\overline{\sgn} f_0 d\mu\right)y_0 \right\|\\
            &< \|T\|\|f\|\eps+ \|y_0\|\left| \int_{E_0}f(\overline{\sgn} f_0 -\varphi) \, d\mu
            \right| \leq 2\|T\|\|f\|\eps,
    \end{align*}
    so $$T(f\chi_{E_0})=\left(\int_{E_0}f\overline{\sgn} f_0 \, d\mu\right)y_0.$$
    As simple functions are dense, we obtain the equality for every $f\in L_1(\mu)$. 
\end{proof} 

We are now able to present the pending proof.

\begin{proof}[Proof of Theorem \ref{teo:uhl}]
    Let $T\in\ELL(L_1(\mu), Y)$ and $\eps>0$. Define a class $\mathcal{M}$ of measurable sets in $\Sigma$ by the condition that $E\in\mathcal{M}$ if there exists an essentially bounded Bochner integrable function $g(E, \eps)\colon \Omega\to Y$ such that 
    \begin{equation*}
        \left\|T(f\chi_E)-\int_{E}fg \,d\mu\right\|\leq \eps \|f\chi_E\|_1
    \end{equation*}
    for every $f \in L_1 (\mu)$.
    Note that $\mathcal{M}$ is closed by taking subsets that are measurable. Indeed, if $E \in \mathcal{M}$ and $A \subseteq E$ is measurable, then given $f \in L_1 (\mu)$, 
    \begin{align*}
        \left\|T(f\chi_A)-\int_{A}fg(E, \eps)d\mu\right\| &= \left\|T(f\chi_A\chi_E)-\int_{E}f\chi_A g(E,\varepsilon) d\mu\right\| \\ 
        &\leq \eps \|f\chi_A\chi_E\|_1 =\eps\|f\chi_A\|_1.
    \end{align*}
    Let $\alpha=\sup\{\mu(E)\colon E\in\mathcal{M}\}$ and $(E_n)\subseteq \mathcal{M}$ a sequence such that $\lim_n \mu(E_n)=\alpha$. Write $A_1=E_1, A_2=E_2 \setminus E_1, \ldots , A_n=E_n \setminus 
 \cup_{i=1}^{n-1} E_i$. Then $A_i\cap A_j=\emptyset$ if $i\neq j$, $\cup_{n=1}^{\infty} A_n=\cup_{n=1}^{\infty} E_n$ and $\mu\left(\cup_{n=1}^{\infty} A_n\right)\geq \mu(E_n)$ for all $n\in \N$, so $\mu\left(\cup_{n=1}^{\infty} A_n\right)\geq\alpha$. As $A_n\subseteq E_n\in \mathcal{M}$ there exists a sequence $(g_n)$ of essentially bounded Bochner integrable functions $g_n\colon\Omega \to Y$ such that for every $f\in L_1(\mu)$
    \begin{equation*}
        \left\|T(f\chi_{A_n})-\int_{A_n}fg_nd\mu\right\|\leq \eps \|f\chi_{A_n}\|_1.
    \end{equation*}
    Then,
    \begin{equation*}
        \left\|\int_{A_n}fg_n \, d\mu\right\|\leq \|T(f\chi_{A_n})\|+\eps \|f\chi_{A_n}\|_1\leq (\|T\|+\eps)\|f\|_1.
    \end{equation*}
    By Lemma \ref{lem:bochner} 
    \begin{equation*}
        \essup \|g_n\chi_{A_n}\|_Y=\sup_{\|f\|_1\leq 1} \left\|\int_{A_n}fg_n \, d\mu\right\|\leq \|T\|+\eps
    \end{equation*}
    and therefore $\sup_n \essup \|g_n\chi_{A_n}\|_Y \leq \|T\|+\eps$ Defining $g\colon\Omega\to Y$ by
    \begin{equation*}
        g(t)=\left\{ \begin{array}{lcc} g_n(t) & \text{if} & t\in A_n \\ \\ 0 & \text{if} & t\notin \cup_{n=1}^{\infty} A_n \end{array} \right.
    \end{equation*}
    we have that $\essup \|g\|_Y\leq \|T\|+\eps$ and if $f \in L_1(\mu)$, then 
    \begin{equation*}
        \begin{split}
            \left\|T(f\chi_{\cup_{n=1}^{\infty}A_n})-\int_{\cup_{n=1}^{\infty}A_n}fg \, d\mu\right\|&\leq\sum_{n=1}^{\infty} \left\|T(f\chi_{A_n})-\int_{A_n}fg_n \, d\mu\right\|\\
            &\leq \sum_{n=1}^{\infty} \eps\|f\chi_{A_n}\|=\eps \|f\chi_{\cup_{n=1}^{\infty}A_n}\|_1 \leq \eps\|f\|_1
        \end{split}
    \end{equation*}
    Therefore, $\cup_{n=1}^{\infty}A_n \in \mathcal{M}$, and so $\mu(\cup_{n=1}^{\infty}A_n)=\alpha$. 
    
Next, we will show that $\alpha= \mu(\Omega)$. Suppose, on the contrary, that $\alpha< \mu(\Omega)$. Set $B_0=\Omega \setminus \cup_{n=1}^{\infty}A_n$ and consider $L_1(B_0)$, the space of integrable functions in $L_1 (\mu)$ supported on $B_0$, that is, 
    \[
    L_1 (B_0) = \{ f \in L_1 (\mu) \colon f =0 \text{ a.e. on } \Omega\setminus B_0\}.
    \]
    Then, $L_1 (B_0)$ is an $L$-summand in $L_1 (\mu)$ via the band projection $f \mapsto f \chi_{B_0}$.  Define $T_1\colon L_1(B_0)\to Y$ by $T_1 := T\restricted_{L_1 (B_0)}$. By Lemma~\ref{lemma:goingdown-Lsummand}, we have that $\RSE (L_1 (B_0), Y)$ is dense in $\mathcal{L}(L_1 (B_0), Y)$. Thus, there exists $T_2\in \RSE(L_1(B_0), Y)$ satisfying that $\|T_1-T_2\|\leq \eps$. By Lemma \ref{lem:rse}, there exist $y_1\in Y$, $B_1\subseteq B_0$, $\mu(B_1)>0$ and $h \in L_{\infty}(\mu)$ with $|h|=1$ on $B_1$ such that
    \begin{equation*}
        T_2(f\chi_{B_1})= \left( \int_{B_1}fh \,d\mu\right) y_1, \,  \forall f\in L_1(B_0).
    \end{equation*}
   Writing $g'=(h\chi_{B_1}) y_1\colon \Omega \to Y$, we have
    \begin{equation*}
    \begin{split}
        \left\|T(f\chi_{B_1})-\int_{B_1}fg'd\mu\right\|&\leq \|T_1(f\chi_{B_1})-T_2(f\chi_{B_1})\|\\
        &\leq \|T_1-T_2\|\|f\chi_{B_1}\|\leq \eps\|f\chi_{B_1}\|,
    \end{split}
    \end{equation*}
    so $B_1\in\mathcal{M}$. Now, set $\tilde{g}=g+g'$, then
    \begin{align*}
            &\left\|T(f\chi_{\cup_{n=1}^{\infty} A_n \cup B_1})-\int_{\cup_{n=1}^{\infty} A_n \cup B_1}f\tilde{g} \, d\mu\right\| \\
            &\leq \sum_{n=1}^{\infty}  \left \|T(f\chi_{ A_n })-\int_{A_n } fg_n \, d\mu \right \|+ \left \|T(f\chi_{ B_1 })-\int_{B_1 } fg' \, d\mu \right \|\\
            &\leq \eps \sum_{n=1}^{\infty} \|f\chi_{A_n}\|+\eps\|f\chi_{B_1}\|=\eps\|f\chi_{\cup_{n=1}^{\infty} A_n \cup B_1}\|,
    \end{align*}
    so $\bigcup_{n=1}^{\infty} A_n \cup B_1 \in \mathcal{M}$ but
    \begin{equation*}
        \mu\left(\bigcup\nolimits_{n=1}^{\infty} A_n \cup B_1\right)=\mu\left(\bigcup\nolimits_{n=1}^{\infty} A_n\right)+\mu(B_1)>\alpha,
    \end{equation*}
    which is a contradiction. Therefore, $\alpha = \mu (\Omega) = \mu (\cup_{n=1}^\infty A_n)$ and
    \begin{equation*}
        \left\|T(f)-\int_{\Omega}f g \, d\mu \right\|\leq \eps\|f\|_1
    \end{equation*}
    for every $f\in L_1 (\mu)$. Finally, let $g_n\colon\Omega \to Y$ be a sequence of Bochner integrable essentially bounded functions such that for all $f\in L_1[0,1]$
    \begin{equation*}
        \left\|T(f)-\int_{\Omega}fg_n \, d\mu \right\|\leq \frac{1}{n}\|f\|_1
    \end{equation*}
    for all $n\in \N$. Then, by Lemma \ref{lem:bochner}, $$\essup\|g_n-g_m\|=\sup_{\|f\|\leq1} \left\|\int_{[0,1]} f(g_n-g_m)d\mu\right\|\to 0,$$ so $(g_n)$ is Cauchy and there exists a Bochner integrable essentially bounded function $g\colon\Omega\to Y$ such that $\lim_n\essup \|g_n-g\|_Y=0$. By the Dominated Convergence Theorem (see \cite[Theorem II.2.3]{DU}),
    \begin{equation*}
        \int fg \, d\mu=\lim_n \int fg_n \, d\mu=T(f)
    \end{equation*}
    for all $f\in L_1 (\mu)$. 
    
    We have proved so far that every operator from $L_1(\mu)$ to $Y$ is representable. Thus, Lemma \ref{lem:representable} shows that $Y$ has the RNP with respect to $(\Omega,\Sigma,\mu)$. Finally, since $\mu$ is not purely atomic, it follows from \cite[Theorem~2]{Chatterji} that $Y$ has the RNP.
\end{proof}

The following nice consequence of the results of this subsection follows easily. 

\begin{corollary}
Let $\mu$ and $\nu$ two finite measures. Then, $\RSE(L_1(\mu),L_1(\nu))$ is dense in $\ELL(L_1(\mu),L_1(\nu))$ if and only if one of the two measures is purely atomic.
\end{corollary}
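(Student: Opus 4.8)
The plan is to obtain both implications by assembling the two principal results of this subsection together with the classical characterization of when an $L_1$-space has the Radon--Nikod\'ym property. The fact I will rely on throughout is that, for a finite measure $\nu$, the space $L_1(\nu)$ has the RNP if and only if $\nu$ is purely atomic: when $\nu$ is purely atomic, $L_1(\nu)$ is isometric to $\ell_1(\Gamma)$ for an at most countable set $\Gamma$ and hence has the RNP, whereas if $\nu$ has a nontrivial atomless part then $L_1(\nu)$ contains an isometric copy of $L_1[0,1]$, which fails the RNP (see \cite{DU}).

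For the sufficiency direction, I would split into two cases according to which measure is purely atomic. If $\nu$ is purely atomic, then $L_1(\nu)$ has the RNP by the fact above, so Corollary~\ref{corollary:domainL1-RNP} immediately gives that $\RSE(L_1(\mu),L_1(\nu))$ is dense in $\ELL(L_1(\mu),L_1(\nu))$. If instead $\mu$ is purely atomic, then $L_1(\mu)$ is isometric to some $\ell_1(\Gamma)$ and therefore has the RNP; since spaces with the RNP are universal domains for the denseness of $\RSE$ (indeed $\ASE$) operators by \cite{Bourgain77} (as recorded just after Theorem~\ref{theorem:no-infdim-hasRSE-B}), $\RSE(L_1(\mu),L_1(\nu))$ is dense for every target space, in particular for $L_1(\nu)$.

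For the necessity direction, I would argue by a short case analysis. Assume $\RSE(L_1(\mu),L_1(\nu))$ is dense. If $\mu$ is purely atomic there is nothing to prove. Otherwise $\mu$ is not purely atomic, and then Theorem~\ref{teo:uhl} applies and yields that the range space $L_1(\nu)$ has the RNP; by the characterization above this forces $\nu$ to be purely atomic. In either case one of the two measures is purely atomic, which completes the proof.

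I do not anticipate a genuine obstacle here: the corollary is essentially a bookkeeping consequence of Corollary~\ref{corollary:domainL1-RNP} and Theorem~\ref{teo:uhl}. The only external input is the standard equivalence ``$L_1(\nu)$ has the RNP if and only if $\nu$ is purely atomic,'' and the only point requiring a moment's care is to confirm that a \emph{finite} purely atomic measure produces an $\ell_1(\Gamma)$-space with $\Gamma$ at most countable, so that the RNP (and hence both the universal-domain property and the applicability of Corollary~\ref{corollary:domainL1-RNP}) genuinely applies.
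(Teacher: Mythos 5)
Your proposal is correct and follows essentially the same route as the paper: sufficiency via Bourgain's RNP result when $\mu$ is purely atomic and via Theorem~\ref{thm:L1} (through its immediate Corollary~\ref{corollary:domainL1-RNP}) when $\nu$ is, and necessity via Theorem~\ref{teo:uhl} combined with the standard fact that $L_1(\nu)$ has the RNP if and only if $\nu$ is purely atomic. The only difference is cosmetic — you spell out that equivalence (and the harmless countability remark about the atoms of a finite measure), which the paper leaves implicit.
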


\begin{proof}
If $\mu$ is purely atomic, then $L_1(\mu)$ has the RNP and therefore the denseness of $\RSE(L_1(\mu),L_1(\nu))$ follows from Bourgain's result \cite{Bourgain77}. If $\nu$ is purely atomic, then $L_1(\nu)$ has the RNP and we may use Theorem \ref{thm:L1}. For the converse, if $\mu$ is not purely atomic, it follows from Theorem \ref{teo:uhl} that $L_1(\nu)$ has the RNP. That is, $\nu$ must be purely atomic.
\end{proof}

Compare it with the result by Iwanik \cite{Iwanik} that $\NA(L_1(\mu),L_1(\nu))$ is dense in $\ELL(L_1(\mu),L_1(\nu))$ for arbitrary measures $\mu$ and $\nu$ \cite{Iwanik}. Actually, the above corollary can be considered as the $\RSE$ version of this result.

\subsection{Weakly Compact Operators from \texorpdfstring{$C(K)$}{C(K)}-Spaces}\label{subsec:W-on-CK}

Another significant result on norm-attaining operators on classical spaces was obtained by Schacher\-may\-er \cite{Schachermayer}. In his paper, he proved that for every compact Hausdorff space $K$ and every Banach space $Y$, $\mathcal{W}(C(K), Y)$ is equal to the norm-closure of the set $\mathcal{W}(C(K), Y) \cap \NA(C(K), Y)$. The following theorem improves this result by replacing $\NA$ with $\RSE$.

\begin{theorem}
Let $K$ be a compact Hausdorff space and $Y$ be a Banach space.
Then, $\mathcal{W}(C(K),Y) \cap \RSE(C(K),Y)$ is dense in $\mathcal{W}(C(K),Y)$.
\end{theorem}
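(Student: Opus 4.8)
The plan is to reduce the statement, via the machinery of Lemma~\ref{lem:strexp+density}, to the construction of a single weakly compact operator that attains its norm at a point whose image is a strongly exposed point of the closure of the image of the ball, and then to build such an operator by transplanting the argument of Theorem~\ref{thm:L1} to the $C(K)$ setting. More precisely, by Lemma~\ref{lem:strexp+density}(b) together with Remark~\ref{remark:comment-to-lemma26} and Proposition~\ref{prop:una=na+uqna}, it suffices to show that every $T\in\mathcal{W}(C(K),Y)$ can be approximated in norm by some $S\in\mathcal{W}(C(K),Y)$ that attains its norm at an $f_0\in S_{C(K)}$ with $Sf_0\in\str(\overline{S(B_{C(K)})})$: indeed, the $\RSE$ operator produced by Lemma~\ref{lem:strexp+density}(b) differs from $S$ only by a rank-one operator, so it remains weakly compact, and it lies arbitrarily close to $T$.

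To build $S$, I would first exploit weak compactness exactly as weak compactness of the domain was exploited for $L_1(\mu)$. Since $T^*$ is weakly compact, the set $\{T^*y^*:y^*\in B_{Y^*}\}$ is relatively weakly compact in $M(K)=C(K)^*$, hence dominated by and uniformly continuous with respect to a single positive regular Borel measure $\lambda$ on $K$; this $\lambda$ plays the role of the measure $\mu$ in Theorem~\ref{thm:L1}. One then approximates $T$ by a weakly compact operator $T_1f=\sum_i\bigl(\int_{A_i}f\,d\lambda\bigr)y_i$ whose representing measure is $\lambda$-simple over a countable measurable partition $(A_i)$, with averaged values $y_i$, controlling $\|T-T_1\|$ by uniform $\lambda$-continuity just as in the proof of Theorem~\ref{thm:L1}. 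Because weakly compact operators are strong RN, $\overline{T_1(B_{C(K)})}\subseteq\overline{\aconv}\{\lambda(A_i)y_i\}$ is an RNP set; boosting the dominant value $\lambda(A_{i_0})y_{i_0}$ by a factor $\alpha>1$, exactly as in the passage from $T_1$ to $T_2$ there, Lemma~\ref{lem:2} forces the (near-maximal-norm) strongly exposed points of $\overline{T_2(B_{C(K)})}$ to be multiples of $\alpha\lambda(A_{i_0})y_{i_0}$.

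The hard part will be \emph{attainment}: I must exhibit $f_0\in B_{C(K)}$ with $T_2f_0$ equal to that strongly exposed value. In $L_1(\mu)$ this was automatic because the relevant optimizer was a multiple of the characteristic function $\chi_{E_{i_0}}$ (Lemma~\ref{lem:rse}), but in $C(K)$ the optimizer must be a \emph{continuous} unimodular ``sign'' concentrated near $A_{i_0}$, which need not exist; this is precisely the difficulty resolved by Schachermayer's theorem. I would therefore invoke \cite{Schachermayer} for $T_2$, and use the regularity of $\lambda$ (compact-inner/open-outer approximation of $A_{i_0}$ combined with Urysohn's lemma) to arrange that the resulting weakly compact norm-attaining $S$ attains its norm precisely at a continuous function whose image is the strongly exposed value produced above, so that $Sf_0\in\str(\overline{S(B_{C(K)})})$. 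Thus the main obstacle is reconciling the $C(K)$-specific norm attainment (Schachermayer's core construction) with the range-side strong exposedness coming from the RNP of the image, i.e.\ guaranteeing that the dominant strongly exposed point of $\overline{S(B_{C(K)})}$ is genuinely attained rather than merely approximated. Once such an $S$ is in hand, Lemma~\ref{lem:strexp+density}(b) finishes the proof, with weak compactness preserved by the rank-one nature of the final perturbation.
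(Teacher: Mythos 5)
Your reduction is exactly the paper's: by Lemma~\ref{lem:strexp+density}(b), together with Remark~\ref{remark:comment-to-lemma26} to preserve weak compactness under the final rank-one perturbation, it suffices to approximate a given weakly compact operator by a weakly compact $S$ attaining its norm at some $f_0\in S_{C(K)}$ with $Sf_0\in\str\bigl(\overline{S(B_{C(K)})}\bigr)$. But the construction you propose for $S$ breaks down at its very first step: a weakly compact operator on $C(K)$ need \emph{not} be approximable in operator norm by operators of the form $T_1f=\sum_i\bigl(\int_{A_i}f\,d\lambda\bigr)y_i$. The control measure $\lambda$ coming from weak compactness gives uniform absolute continuity of the representing measure $m$ of $T$, but \emph{not} a Bochner density: any operator of your form has representing measure $g\,d\lambda$ with $g$ countably valued and integrable, hence of bounded variation, whereas $m$ typically has infinite variation. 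Concretely, let $J\colon C[0,1]\to L_2[0,1]$ be the natural inclusion, which is weakly compact. Given any $T_1f=\sum_i\bigl(\int_{A_i}f\,d\nu\bigr)y_i$ with $\sum_i\nu(A_i)\|y_i\|<\infty$, choose $N$ so that the $\nu$- and Lebesgue-measures of $\bigcup_{i>N}A_i$ are tiny, and test with a continuous random-sign function $f$ adapted to a fine interval partition (made to vanish near the finitely many heavy atoms of $\nu$): with positive probability all the integrals $\int_{A_i}f\,d\nu$, $i\le N$, are tiny by a variance estimate, while $\|f\|_{L_2}\approx 1$, so $\|Jf-T_1f\|\geq 1/2$. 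Thus $J$ stays at distance at least $1/2$ from your entire class of approximants, for every choice of $\lambda$, partition, and values. This is precisely why the $L_1(\mu)$ argument does not transplant: Theorem~\ref{thm:L1} rests on Lemma~\ref{lem:representable} (strong RN operators on $L_1(\mu)$ are representable by a density), and no analogue holds on $C(K)$ --- note that $J$ maps into a Hilbert space, which even has the RNP, so no hypothesis on $Y$ rescues the step.

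The second gap is the attainment step, which you rightly call the hard part but then leave unproved: invoking Schachermayer's \emph{theorem} as a black box for $T_2$ produces some norm-attaining weakly compact operator nearby, with no control on where it attains its norm nor on whether the image of the attaining point is still a strongly exposed point of the closure of the image of the new unit ball; and Urysohn-type regularity cannot in general manufacture a continuous unimodular optimizer concentrated on $A_{i_0}$ --- that impossibility is the whole difficulty. The paper resolves it at the adjoint level: first, \cite[Remark 3.4]{CCJM} (the abstract perturbation result for strong RN operators, applicable since weakly compact operators are strong RN) yields a weakly compact $T$ near the given operator which is $\UQNA$ towards some $y_0$ and such that $T^*$ attains its norm at a functional $y_0^*$ strongly exposing $\overline{T(B_{C(K)})}$ at $y_0$; then \cite[Lemma 3.1]{Schachermayer}, applied to the weakly compact set $T^*(B_{Y^*})$, gives a norm-one operator on $C(K)^*$ moving each $T^*y^*$ by at most $\eps$ and making the single measure $\mu_0=T^*y_0^*$ attained by some $f_0\in S_{C(K)}$. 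The perturbed operator $A=T^{**}S^*\restricted_{C(K)}$ then attains its norm at $f_0$ with $Af_0=y_0$ still strongly exposed by $y_0^*$, and Lemma~\ref{lem:strexp+density}(b) finishes. You would need this mechanism (or an equivalent one converting quasi-attainment into attainment while preserving the exposing functional); the density-of-simple-representing-measures route should be abandoned.
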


\begin{proof}
We follow the argument of the proof of \cite[Theorem B]{Schachermayer}. Let $S \in \mathcal{W}(C(K),Y)$ be such that $\|S\|=1$ and let $\varepsilon>0$ be given. Since $S \in \mathcal{W}(C(K),Y)$, it is a strong RN operator. By \cite[Remark 3.4]{CCJM}, there exists $T \in \mathcal{W}(C(K), Y)$ such that 
\begin{enumerate}[label=(\roman*)]
\itemsep0.25em
    \item $T \in \UQNA(C(K),Y)$ towards some $y_0$; 
    \item $\|S-T\| < \varepsilon$; 
    \item \label{eq:y_0_strongly_exposed} $T^*$ attains its norm at some $y_0^* \in S_{Y^*}$ which strongly exposes $\overline{T(B_{C(K)})}$ at $y_0$ with $\sup \re y_0^* (T(B_{C(K)})) = 1$. 
\end{enumerate}   
Put $\mu_0 := T^* (y_0^*)$. Applying \cite[Lemma 3.1]{Schachermayer} to the weakly compact subset $T^* (B_{Y^*})$, there exists a bounded linear operator $S \colon C(K)^* \to C(K)^*$ with $\|S\|=1$ such that 
\begin{enumerate}[label=(\roman*')]
\itemsep0.25em
    \item $\| S \mu - \mu \| \leq \varepsilon$ for every $\mu \in T^* (B_{Y^*})$; 
    \item there exists $f_0 \in {C(K)}$ such that $\|f_0\|=1$ and $\langle f_0,S \mu_0 \rangle =\|\mu_0 \|=1$. 
\end{enumerate} 
Let $A={(ST^*)^*} \restricted_{C(K)}=T^{**}S^* \restricted_{C(K)} \colon C(K) \to Y$ (here, $A(C(K))\subset Y$ by weak-compactness of $T$). First, note that $A^*=ST^*$ and
\[ \|A-T\|=\|A^*-T^*\| = \|ST^*-T^*\| \leq \varepsilon. 
\]
By the part (b) of Lemma \ref{lem:strexp+density}, if we show that $A \in \NA(C(K),Y)$ at $f_0$ and that $Af_0$ is a strongly exposed point of $\overline{A(B_{C(K)})}$, then there exists $Q \in \RSE ( C(K), Y)$ such that $\|A-Q\| < \varepsilon$ and $A-Q$ is a rank-one operator. It follows that $Q \in \mathcal{W}(C(K),Y)\cap\RSE(C(K),Y)$ and $\|Q-S\| \leq \|Q-A\|+\|A-T\|+\|T-S\| < 3\varepsilon$.

To this end, observe that $\|A\| \leq 1$ and 
\begin{equation*}\label{eq:A_na}
 \|Af_0\| \geq y_0^*(Af_0) = (A^*y_0^*) (f_0) = (ST^*y_0^*)(f_0)= \langle f_0, S\mu_0 \rangle =1. 
 \end{equation*} 
Moreover, 
\begin{align*}
    1= y_0^* (Af_0) &\leq \sup \re y_0^* \bigl(\overline{T^{**} (B_{C(K)^{**}})} \bigr) \quad \bigl(\text{since } A(B_{C(K)}) \subseteq \overline{ T^{**} (B_{C(K)^{**}} } )  \bigr)  \\
    &=  \sup \re y_0^* \bigl(\overline{T (B_{C(K)} )} \bigr) \quad \bigl(\text{since } \overline{ T^{**} (B_{C(K)^{**}} } ) =  \overline{ T (B_{C(K)}) } \bigr) \\
    &= \re y_0^* (y_0) = 1  \quad (\text{due to \ref{eq:y_0_strongly_exposed}}) 
\end{align*} 
As $y_0^*$ strongly exposes $\overline{T(B_{C(K)})}$ at $y_0$, it follows that $y_0 = Af_0$ and that $y_0^*$ strongly exposes $\overline{A(B_{C(K)})}$ at $Af_0$, as desired.
\end{proof}

\subsection{Finite-Rank and Compact Operators}\label{subsec:finiterank-compact}
Let us start recalling that there are compact operators which cannot be approximated by norm attaining ones \cite{MarJFA20214} and that the existence of finite-rank operators of the same kind is an old open problem. We send the interested reader to \cites{Martin-RACSAM2016,KLMW} for more information. Therefore, our proofs of the denseness of $\RSE$ compact operators of $\RSE$ finite-rank operators require additional conditions to hold. Even more, in the case of compact operators, with the same ideas as those used in Subsection~\ref{subsec:Acosta-RSE}, we may prove the following result which is the $\RSE$ version of \cite[Proposition 6]{MarJFA20214}, where the strict convexity of the space $Y$ was required. Recall also that every Banach space can be renormed with Lindenstrauss' property $\beta$ \cite{Partington} and that this property implies that $\NA(X,Y)\cap \comp(X,Y)$ is dense in $\comp(X,Y)$ for every Banach space $X$ (see \cite[Proposition 4.1]{Martin-RACSAM2016}).

\begin{prop}\label{prop:no-universal-RSE-B-compact}
For every infinite-dimensional Banach space $Y$ failing the approximation property, there is a Banach space $X$ and a compact linear operator from $X$ to $Y$ which cannot be approximated by $\RSE$ operators.
\end{prop}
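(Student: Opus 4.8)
The plan is to transplant the proof of \cite[Proposition 6]{MarJFA20214}, which handles norm-attaining operators into a \emph{strictly convex} range space, to the $\RSE$ setting, thereby removing the strict convexity hypothesis. The point is that strict convexity of $Y$ enters that argument in exactly one place, through the following elementary fact: if $Y$ is strictly convex and $S\in\NA(X,Y)$ attains its norm at $x_0\in S_X$ with $\|x_0+z\|,\|x_0-z\|\leq 1$, then $Sz=0$ (indeed $Sx_0$ is the midpoint of $S(x_0\pm z)$, both of norm at most $\|S\|=\|Sx_0\|$, so strict convexity forces $S(x_0\pm z)=Sx_0$). In our situation this is replaced verbatim by Lemma~\ref{lemma:easy-lemma-to-kill-propertyRSE-B}, which yields the same conclusion $Sz=0$ for every $S\in\RSE(X,Y)$ and an arbitrary range space $Y$. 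This is precisely the flat-direction-killing mechanism already exploited for the space $X(w)$ in Subsection~\ref{subsec:Acosta-RSE}.

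With this substitution at hand, I would run the construction of \cite{MarJFA20214} essentially unchanged. Since $Y$ fails the approximation property, Grothendieck's characterization (see, e.g., \cite{DiestelJarchowTonge}) provides a Banach space $X$ together with a compact operator $T\in\comp(X,Y)$ that lies outside $\overline{\FR(X,Y)}$; write $d:=\dist\bigl(T,\FR(X,Y)\bigr)>0$. The domain $X$ produced there is designed so that its unit sphere carries, at every norming point $x_0$, an abundance of flat directions $z$ (that is, with $\|x_0\pm z\|\leq 1$), mirroring property (iii) of Example~\ref{example:Acosta-X(w)}. Now let $S\in\RSE(X,Y)$ be any candidate approximant and let $x_0\in S_X$ be a point at which it attains its norm. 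Applying Lemma~\ref{lemma:easy-lemma-to-kill-propertyRSE-B} to each flat direction, $S$ annihilates the whole flat subspace at $x_0$; as in \cite{MarJFA20214} this confines $S$ to the finite-rank operators (equivalently, $S$ vanishes on a subspace on which $T$ stays bounded below), whence $\|T-S\|\geq d$. Since $S$ was arbitrary, this gives $T\notin\overline{\RSE(X,Y)}$.

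The genuine content, and the main obstacle, is the domain construction of \cite{MarJFA20214} itself: one must produce a single space $X$ that simultaneously detects the failure of the approximation property of $Y$ through a compact operator lying outside $\overline{\FR(X,Y)}$ and possesses enough flat directions at every unit vector for the flat-direction-killing to trap $\RSE$ operators near the finite-rank operators. Reconciling these two requirements is the delicate step, and it is inherited directly from the strictly convex case. On the $\RSE$ side the only thing to verify is bookkeeping: that strict convexity is invoked in the original proof solely to annihilate flat directions, so that Lemma~\ref{lemma:easy-lemma-to-kill-propertyRSE-B} is a faithful, hypothesis-for-hypothesis replacement. Once this is granted, the $\RSE$ conclusion follows exactly as in the norm-attaining, strictly convex case, and no assumption on the geometry of $Y$ beyond infinite-dimensionality and the failure of the approximation property is needed.
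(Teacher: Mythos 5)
Your proposal is correct and takes essentially the same route as the paper's own proof: Grothendieck--Enflo produces a closed subspace $X$ of $c_0$ and a compact operator $T\colon X\to Y$ lying outside $\overline{\FR(X,Y)}$, and Lemma~\ref{lemma:easy-lemma-to-kill-propertyRSE-B} substitutes for strict convexity to show that any $S\in\RSE(X,Y)$ annihilates the flat subspace at its norming point, which is of finite codimension (\cite[Lemma~2.2]{Martin-RACSAM2016}), so that $\RSE(X,Y)\subseteq\FR(X,Y)$ and hence $\|T-S\|\geq\dist\bigl(T,\FR(X,Y)\bigr)>0$. The one detail you deferred to the cited construction --- that the domain is a closed subspace of $c_0$, where the flat directions at every unit vector span a finite-codimensional subspace --- is precisely the fact the paper invokes, so your reconstruction matches the published argument.
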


\begin{proof}
As $Y$ fails the approximation property, a result of Grothendieck combined with Enflo's example (cf.\ \cite[Theorem 1.e.5 and Theorem 2.d.6]{LT}), ensures the existence of a closed subspace $X$ of $c_0$ and a compact operator $T\colon X\to Y$ which cannot be approximated by finite-rank operators (see \cite[Theorem 18.3.2]{Jarchow}). To complete the proof, it is enough to prove that $\RSE(X,Y)\subseteq \FR(X,Y)$. Indeed, if $S\in \RSE(X,Y)$ attains its norm at $x_0\in S_X$, being $X$ a closed subspace of $c_0$, it follows that $Z:=\overline{\spann}\{z\in X\colon \|x_0\pm z\|\leq 1\}$ is of finite-codimension (see \cite[Lemma~2.2]{Martin-RACSAM2016} for the easy proof). By Lemma~\ref{lemma:easy-lemma-to-kill-propertyRSE-B}, it follows that $S(Z)=0$, so $S$ has finite-rank.
\end{proof}

Regarding the denseness of finite-rank norm-attaining operators, one of the main consequences in \cite{KLMW} establishes that if a Banach space $X$ satisfies that $\NA(X)$ contains sufficiently many linear subspaces, then the set $\FR \cap \NA$ of finite-rank norm-attaining operators on $X$ is dense in the ideal $\FR$ of finite-rank operators. In the next theorem, we improve upon these results by replacing $\FR\cap\NA$ with $\FR \cap \RSE$. The proof is closely related to the one of \cite[Theorem 3.3]{KLMW}, using the technical results we have presented in Section \ref{sec:pre}.

\begin{theorem} \label{theo-finite rank}
Let $X$ and $Y$ be Banach spaces. Suppose that for every $n \in \N$, every $\eps >0$ and all $x_1^*,\ldots, x_n^* \in B_{X^*}$, there are $y_1^*, \ldots, y_n^* \in B_{X^*}$ such that $\|x_i^*-y_i^* \| < \eps$, and $\spann\{y_1^*,\ldots, y_n^*\} \subseteq \NA(X)$. Then,
\[
\FR(X,Y) \subseteq \overline{\FR(X,Y) \cap \RSE(X,Y)}.
\]
\end{theorem}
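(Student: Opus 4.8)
The plan is to deduce the statement from Proposition~\ref{prop:proximinal+RNP}(b): since $\FR$ is an operator ideal, that result guarantees $\tilde T\in\overline{\FR(X,Y)\cap\RSE(X,Y)}$ for \emph{any} finite-rank $\tilde T$ satisfying $(\ker\tilde T)^\perp\subseteq\NA(X)$. Thus it is enough to approximate an arbitrary $T\in\FR(X,Y)$, in operator norm, by finite-rank operators whose kernel annihilator lies inside $\NA(X)$, and the hypothesis on $X$ is designed exactly to manufacture such operators by perturbing the functionals appearing in a representation of $T$.

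Concretely, I would fix $T\in\FR(X,Y)$ and $\varepsilon>0$ (the case $T=0$ being trivial, the zero operator being vacuously $\RSE$) and write $T$ in a minimal representation $Tx=\sum_{i=1}^{n}x_i^*(x)\,y_i$ with $y_1,\dots,y_n\in Y$ linearly independent; after scaling I may assume $x_1^*,\dots,x_n^*\in B_{X^*}$. Applying the hypothesis with $\eta:=\varepsilon\big(\sum_{i=1}^{n}\|y_i\|\big)^{-1}$ in place of $\varepsilon$ produces $y_1^*,\dots,y_n^*\in B_{X^*}$ with $\|x_i^*-y_i^*\|<\eta$ and $\spann\{y_1^*,\dots,y_n^*\}\subseteq\NA(X)$. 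Setting $\tilde Tx:=\sum_{i=1}^{n}y_i^*(x)\,y_i$ gives a finite-rank operator with $\|T-\tilde T\|\leq\sum_{i=1}^{n}\|x_i^*-y_i^*\|\,\|y_i\|<\varepsilon$.

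The key observation is that, because the $y_i$ are linearly independent, $\ker\tilde T=\bigcap_{i=1}^{n}\ker y_i^*$, whence $(\ker\tilde T)^\perp=\spann\{y_1^*,\dots,y_n^*\}\subseteq\NA(X)$; note this holds even if the perturbed functionals $y_i^*$ happen to be linearly dependent (so that $\tilde T$ may have rank below $n$). Proposition~\ref{prop:proximinal+RNP}(b) then places $\tilde T$ in $\overline{\FR(X,Y)\cap\RSE(X,Y)}$, and since $\varepsilon>0$ was arbitrary and this set is norm-closed, a routine two-$\varepsilon$ argument yields $T\in\overline{\FR(X,Y)\cap\RSE(X,Y)}$.

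I do not anticipate a genuine obstacle: the whole argument follows the template of \cite[Theorem~3.3]{KLMW}, and the only substantive point is recognising that the hypothesis on $X$ delivers precisely the inclusion $(\ker\tilde T)^\perp\subseteq\NA(X)$ after an arbitrarily small perturbation, with the passage from norm attainment to the $\RSE$ property handled entirely inside Proposition~\ref{prop:proximinal+RNP}(b) (where the reflexivity of the finite-dimensional quotient $X/\ker\tilde T$ and the extraction of an $\ASE$ operator on it are used). The minor care points are normalising the representation so that the $x_i^*$ lie in $B_{X^*}$, as the hypothesis requires, and bookkeeping the factor $\sum_{i}\|y_i\|$ so that a small perturbation of the functionals gives a small perturbation of the operator.
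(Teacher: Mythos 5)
Your proposal is correct and follows essentially the same route as the paper: both perturb the representing functionals via the hypothesis so that the kernel's annihilator lands in $\NA(X)$, and your appeal to Proposition~\ref{prop:proximinal+RNP}(b) merely packages the quotient/$\ASE$ argument that the paper carries out inline (via Proposition~\ref{prop:quotient_implication2} and \cite[Corollary~2.6]{KLMW}). Your extra care with a minimal representation is harmless but unnecessary, since only the inclusion $(\ker\tilde T)^\perp\subseteq\spann\{y_1^*,\dots,y_n^*\}$ is needed, which holds without linear independence of the $y_i$.
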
 

\begin{proof}
Let $T \in \FR(X,Y)$ and write $T=\sum_{i=1}^n x_i^* \otimes y_i$ for some $x_i^* \in X^*$ and $y_i \in Y$, $i=1,\ldots, n$. Given $\eps >0$, by assumption, we can find $z_1^*, \ldots, z_n^* \in X^*$ such that $\spann\{z_1^*,\ldots, z_n^* \} \subseteq \NA(X)$ and $R:= \sum_{i=1}^n z_i^* \otimes y_i$ satisfies that $\|R-T\| < \eps$. Then $(\ker R)^{\perp} \subseteq \spann\{z_1^*,\ldots, z_n^*\} \subseteq \NA (X)$.
Since $X/\ker R$ is finite-dimensional, we can find $\tilde{S} \in \ASE(X/\ker R, Y)$ such that $\|\tilde{S}- \tilde{R} \| < \eps$. Let $S:= \tilde{S} \circ q \in \Lin (X,Y)$, where $q\colon X \rightarrow X/\ker R$ is the canonical quotient map. Since $(\ker {S})^{\perp} \subseteq \NA (X)$ and $S$ is of finite-rank, we have that $\|Sx_0\| = \|(\tilde{S}\circ q) (x_0)\|= \|S\|$ for some $x_0 \in S_X$ (see \cite[Corollary 2.6]{KLMW}). Finally, Proposition \ref{prop:quotient_implication2} shows that $S \in \RSE(X,Y)$, finishing the proof.
\end{proof}

The following main consequence follows directly.

\begin{corollary}
Let $X$ be a Banach space such that there is a norm dense linear subspace of $X^*$ contained in $\NA(X)$. Then, for every Banach space $Y$, we have:
 \[ \FR(X,Y) \subseteq \overline{\FR(X,Y) \cap \RSE(X,Y)}.\] 
\end{corollary}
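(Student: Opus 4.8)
The plan is to derive this corollary directly from Theorem~\ref{theo-finite rank} by verifying that its hypothesis is automatically satisfied whenever $X^*$ contains a norm-dense linear subspace inside $\NA(X)$. Let me denote this dense subspace by $D$, so $D \subseteq \NA(X)$ is a linear subspace with $\overline{D} = X^*$.

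First I would recall precisely what Theorem~\ref{theo-finite rank} requires: for every $n \in \N$, every $\eps > 0$, and all $x_1^*, \ldots, x_n^* \in B_{X^*}$, we must produce $y_1^*, \ldots, y_n^* \in B_{X^*}$ with $\|x_i^* - y_i^*\| < \eps$ and $\spann\{y_1^*, \ldots, y_n^*\} \subseteq \NA(X)$. The essential point is that $D$ is \emph{closed under taking linear combinations}, so if we can approximate each $x_i^*$ by an element $y_i^* \in D$, then the whole span $\spann\{y_1^*, \ldots, y_n^*\}$ lies inside $D$, hence inside $\NA(X)$. This is exactly the mechanism that makes the span condition cheap: we only need pointwise approximation by elements of a single fixed \emph{linear} subspace.

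The one technical subtlety is the normalization: Theorem~\ref{theo-finite rank} demands the approximants $y_i^*$ to lie in $B_{X^*}$, whereas density of $D$ only gives approximants of unrestricted norm. To handle the boundary case, I would argue as follows. Given $x_i^* \in B_{X^*}$, use density of $D$ to pick $w_i^* \in D$ with $\|x_i^* - w_i^*\| < \eps/2$; then $\|w_i^*\| < 1 + \eps/2$. Setting $y_i^* := w_i^* / \max\{1, \|w_i^*\|\}$ keeps $y_i^* \in D$ (since $D$ is a subspace, scaling stays inside), places $y_i^* \in B_{X^*}$, and one checks $\|x_i^* - y_i^*\| < \eps$ by a routine estimate comparing $y_i^*$ with $w_i^*$ and $w_i^*$ with $x_i^*$. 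Thus all hypotheses of Theorem~\ref{theo-finite rank} hold.

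I do not expect any genuine obstacle here; the corollary is essentially a packaging of Theorem~\ref{theo-finite rank}, and the only thing to be careful about is the elementary renormalization to land in the unit ball while staying inside the subspace $D$. Here is the proof.

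\begin{proof}
Let $D \subseteq \NA(X)$ be a norm-dense linear subspace of $X^*$. We verify the hypothesis of Theorem~\ref{theo-finite rank}. Fix $n \in \N$, $\eps > 0$, and $x_1^*, \ldots, x_n^* \in B_{X^*}$. For each $i$, by density of $D$ choose $w_i^* \in D$ with $\|x_i^* - w_i^*\| < \eps/2$, and set
\[
y_i^* := \frac{w_i^*}{\max\{1, \|w_i^*\|\}}.
\]
Since $D$ is a linear subspace, $y_i^* \in D \subseteq \NA(X)$, and clearly $y_i^* \in B_{X^*}$. If $\|w_i^*\| \leq 1$ then $y_i^* = w_i^*$ and $\|x_i^* - y_i^*\| < \eps/2 < \eps$. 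Otherwise $\|w_i^*\| > 1$, and using $\|w_i^*\| \leq \|x_i^*\| + \eps/2 \leq 1 + \eps/2$ we estimate
\[
\|x_i^* - y_i^*\| \leq \|x_i^* - w_i^*\| + \|w_i^* - y_i^*\| < \frac{\eps}{2} + \left(\|w_i^*\| - 1\right) < \frac{\eps}{2} + \frac{\eps}{2} = \eps.
\]
In either case $\|x_i^* - y_i^*\| < \eps$. Moreover, as $D$ is a linear subspace, $\spann\{y_1^*, \ldots, y_n^*\} \subseteq D \subseteq \NA(X)$. Therefore the hypothesis of Theorem~\ref{theo-finite rank} is satisfied, and we conclude that $\FR(X,Y) \subseteq \overline{\FR(X,Y) \cap \RSE(X,Y)}$ for every Banach space $Y$.
\end{proof}
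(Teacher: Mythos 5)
Your proposal is correct and matches the paper's intent exactly: the paper states this corollary ``follows directly'' from Theorem~\ref{theo-finite rank} without further detail, and your argument simply spells out that direct derivation, with the renormalization $y_i^* := w_i^*/\max\{1,\|w_i^*\|\}$ handled correctly (note $\|w_i^* - y_i^*\| = \|w_i^*\| - 1$ when $\|w_i^*\| > 1$, so your estimate goes through). Nothing is missing.
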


We conclude this section with a brief discussion on the density of $\FR(X,Y) \cap \RSE(X,Y)$ in $\mathcal{K}(X,Y)$. First, notice that the denseness of $\FR(X,Y) \cap \RSE(X,Y)$ in $\FR(X,Y)$ implies that $\comp(X,Y)$ coincides with $\overline{\FR(X,Y) \cap \RSE(X,Y)}$ whenever $Y$ or $X^*$ has the approximation property. 

\begin{corollary}
Let $X$ be a Banach space such that there is a norm dense linear subspace of $X^*$ contained in $\NA(X)$ and let $Y$ be a Banach space. If $X^*$ or $Y$ has the approximation property, then 
$$
\comp(X,Y) = \overline{\FR(X,Y) \cap \RSE(X,Y)}.
$$
\end{corollary}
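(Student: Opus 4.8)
The plan is to combine the immediately preceding corollary with the classical characterization of the approximation property in terms of the approximability of compact operators by finite-rank ones; once both ingredients are in place, the argument is purely formal.

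First I would record the standard fact (essentially due to Grothendieck) that if either $X^*$ or $Y$ has the approximation property, then every compact operator from $X$ into $Y$ is the operator-norm limit of finite-rank operators; equivalently, $\comp(X,Y) = \overline{\FR(X,Y)}$. This is exactly the observation already flagged in the paragraph preceding the statement, so it can be quoted without further ado.

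Next I would invoke the preceding corollary, which under the hypothesis that $X^*$ contains a norm-dense linear subspace inside $\NA(X)$ gives $\FR(X,Y) \subseteq \overline{\FR(X,Y) \cap \RSE(X,Y)}$. Since the right-hand side is already norm-closed, passing to closures yields $\overline{\FR(X,Y)} \subseteq \overline{\FR(X,Y) \cap \RSE(X,Y)}$, and therefore, using the approximation-property fact, $\comp(X,Y) = \overline{\FR(X,Y)} \subseteq \overline{\FR(X,Y) \cap \RSE(X,Y)}$. For the reverse inclusion I would simply note the chain $\FR(X,Y) \cap \RSE(X,Y) \subseteq \FR(X,Y) \subseteq \comp(X,Y)$ together with the fact that $\comp(X,Y)$ is a norm-closed subspace of $\Lin(X,Y)$; hence $\overline{\FR(X,Y) \cap \RSE(X,Y)} \subseteq \comp(X,Y)$. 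Combining the two inclusions delivers the claimed equality.

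I do not anticipate any genuine obstacle here: the substantive content has already been established in the preceding corollary (via Theorem~\ref{theo-finite rank}), and the only additional ingredient is the well-known link between the approximation property and the density of $\FR(X,Y)$ in $\comp(X,Y)$. The one point to be careful about is the direction in which the approximation property is assumed — it must be on $X^*$ or on $Y$, which is precisely what guarantees $\comp(X,Y) = \overline{\FR(X,Y)}$ — and the mild bookkeeping that closing an inclusion into an already-closed set preserves it.
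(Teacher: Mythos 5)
Your proposal is correct and coincides with the paper's own (implicit) argument: the paper's preceding remark states exactly that the denseness of $\FR(X,Y)\cap\RSE(X,Y)$ in $\FR(X,Y)$, combined with the classical fact that the approximation property of $X^*$ or $Y$ yields $\comp(X,Y)=\overline{\FR(X,Y)}$, gives the equality, and the reverse inclusion follows since $\comp(X,Y)$ is norm-closed. Nothing is missing.
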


As Theorem \ref{theo-finite rank} and its two corollaries strengthen the result of \cite[Theorem 3.3]{KLMW}, some consequences derived from it can also be improved, including the classical result by Johnson--Wolfe \cite{JW79} on compact operator acting from $C(K)$ and $L_1(\mu)$ spaces. Nevertheless, we provide a non-exhaustive list of such examples and we refer the reader to \cite{KLMW} for further details, references, and additional examples.

\begin{corollary}\label{corollary--bigoneforfiniterank-compact-fordomainspaces}
Let $X$ and $Y$ be Banach spaces. Then 
$$
\FR(X,Y) \subseteq \overline{\FR(X,Y) \cap \RSE(X,Y)}
$$
in the following cases: 
\begin{enumerate}
\itemsep0.25em
    \item $X$ satisfies that for all $x_1^*, \dots, x_n^* \in B_{X^*}$ and every $\varepsilon > 0$, there is a norm-one projection $P \in \Lin(X, X)$ of finite-rank such
that $\max_i \|x_i^*-P^*(x_i^*)\|<\varepsilon$. 
    \begin{enumerate}[label=\textup{(a\arabic*)}, itemsep=0.25em, before=\vspace{-1em}]
        \itemsep0.25em 
        \item $X= C_0 (L)$ where $L$ is a locally compact Hausdorff topological space. 
        \item $X= L_p (\mu)$ where $1\leq p \leq \infty$ and $\mu$ is a positive measure. 
    \end{enumerate}
\item $X$ is an isometric predual of $\ell_1$. 
\item $X$ has a shrinking monotone Schauder basis.
\item $X$ is a closed subspace of $c_0$ with monotone Schauder basis.
\item $X$ is a finite-codimensional proximinal subspace of $c_0$.
\item $X$ is a finite-codimensional proximinal subspace of $\mathcal{K}(\ell_2)$.
\item $X$ is a $c_0$-sum of reflexive spaces. 
\end{enumerate}
Moreover, in items (a), (b), (c), and (d), we actually have that 
$$
\comp(X,Y) = \overline{\FR(X,Y) \cap \RSE(X,Y)}.
$$
\end{corollary}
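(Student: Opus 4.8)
The plan is to derive every case from Theorem~\ref{theo-finite rank}, whose hypothesis is \emph{verbatim} the one appearing in \cite[Theorem 3.3]{KLMW}. Consequently, once we know that a given space $X$ fulfils that hypothesis, the inclusion $\FR(X,Y)\subseteq\overline{\FR(X,Y)\cap\RSE(X,Y)}$ follows immediately for every $Y$, and the whole task reduces to checking the hypothesis for each $X$ in the list. For cases (b)--(g) this checking is exactly what is carried out in \cite{KLMW} for the norm-attaining version, so I would simply invoke those verifications; the only genuinely new points are the treatment of case (a) and of the \emph{moreover} assertion.

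For case (a), I would first argue that the stated projection condition implies the hypothesis of Theorem~\ref{theo-finite rank}. Fix $x_1^*,\dots,x_n^*\in B_{X^*}$ and $\eps>0$, and let $P\in\Lin(X,X)$ be a norm-one finite-rank projection with $\max_i\|x_i^*-P^*x_i^*\|<\eps$. Put $y_i^*:=P^*x_i^*$; then $\|y_i^*\|=\|P^*x_i^*\|\le\|x_i^*\|\le1$ and $\spann\{y_1^*,\dots,y_n^*\}\subseteq P^*(X^*)$, so it suffices to show $P^*(X^*)\subseteq\NA(X)$. If $f\in P^*(X^*)$ then, since $P^*$ is a projection, $f=P^*f$, hence $f(x)=f(Px)$ for all $x\in X$, and therefore
\begin{equation*}
\|f\|=\sup_{x\in B_X}|f(Px)|=\sup_{y\in\overline{P(B_X)}}|f(y)|.
\end{equation*}
As $\|P\|=1$ one has $P(B_X)\subseteq B_X$, and because $P(X)$ is finite-dimensional the set $\overline{P(B_X)}$ is a compact subset of $B_X$; thus the supremum is attained at some $y_0\in\overline{P(B_X)}\subseteq B_X$, giving $f\in\NA(X)$. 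This establishes case (a), and the two subcases follow because both $C_0(L)$ (via partitions of unity) and $L_p(\mu)$ for $1\le p\le\infty$ (via conditional expectations onto finite sub-$\sigma$-algebras) admit such norm-one finite-rank projections approximating finitely many functionals, exactly as in \cite{KLMW}.

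For cases (b)--(g) I would invoke \cite{KLMW}, where each of these spaces is shown to satisfy precisely the hypothesis of Theorem~\ref{theo-finite rank} (isometric preduals of $\ell_1$, spaces with a shrinking monotone basis, subspaces of $c_0$ with a monotone basis, finite-codimensional proximinal subspaces of $c_0$ and of $\comp(\ell_2)$, and $c_0$-sums of reflexive spaces), and then apply the theorem to conclude $\FR(X,Y)\subseteq\overline{\FR(X,Y)\cap\RSE(X,Y)}$ for every $Y$.

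Finally, for the \emph{moreover} part I would appeal to the remark preceding the corollary: the denseness of $\FR(X,Y)\cap\RSE(X,Y)$ in $\FR(X,Y)$ upgrades to $\comp(X,Y)=\overline{\FR(X,Y)\cap\RSE(X,Y)}$ as soon as $X^*$ or $Y$ has the approximation property. Since $Y$ is arbitrary, I would verify that $X^*$ has it in cases (a)--(d): in case (a) the bounded net of adjoint projections $P^*$, which are finite-rank and converge to the identity uniformly on finite (hence compact) subsets of $X^*$, witnesses the (metric) approximation property of $X^*$ directly; in case (b) one has $X^*=\ell_1$; and in cases (c) and (d) the relevant monotone basis is shrinking, so $X^*$ has a Schauder basis. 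The main obstacle is really case (a) --- both the projection argument yielding $P^*(X^*)\subseteq\NA(X)$ and the observation that condition (a) already encodes the approximation property of $X^*$ --- whereas the remaining space-by-space verifications are inherited unchanged from \cite{KLMW}, precisely because Theorem~\ref{theo-finite rank} shares its hypothesis with \cite[Theorem 3.3]{KLMW}.
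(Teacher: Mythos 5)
Your proposal is correct and matches the paper's (implicitly given) proof: the paper states this corollary without a separate argument, precisely because it follows from Theorem~\ref{theo-finite rank} (whose hypothesis coincides with that of \cite[Theorem 3.3]{KLMW}) together with the space-by-space verifications in \cite{KLMW} and the preceding corollary on the approximation property for the \emph{moreover} part. Your explicit treatment of case (a) --- the compactness argument showing $P^*(X^*)\subseteq\NA(X)$ and the observation that the norm-one projections witness the metric approximation property of $X^*$ --- is exactly the standard argument from \cite{KLMW}, and your appeal to that paper for cases (b)--(g), including the fact that a monotone basis of a subspace of $c_0$ is shrinking (needed both for the main hypothesis in case (d) and for the approximation property of $X^*$ there), is what the authors intend.
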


\section{Range Spaces with Quasi-ACK Structure}\label{sec:ACK}

In this section we will provide denseness results of $\RSE$ operator in the case of specific target Banach spaces $Y$---namely, those with the so-called quasi-ACK structure---under the hypothesis of the denseness of $\NA(X) \cap \smo({X^*})$  in $X^*$.

The notion of quasi-ACK structure was introduced in \cite{CJ23} as a generalization of ACK$_\rho$ structure \cite{CGKS2018} and property quasi-$\beta$ \cite{AAP2}. A Banach space $Y$ with a quasi-ACK structure serves as a universal range space, ensuring the denseness of norm-attaining operators from $X$ into $Y$ within a specific operator ideal class—namely, the $\Gamma$-flat operator ideal—for any arbitrary domain space $X$.

Let us present a formal definition of quasi-$\ACK$ structure. A Banach space $X$ is said to have \emph{quasi-$\ACK$ structure} if there exist a $1$-norming set $\Gamma \subseteq B_{X^*}$ and a function $\rho\colon \Gamma \to [0,1)$ such that for any $e^* \in \operatorname{ext}(B_{X^*})$, there exists $\Gamma_{e^*} \subseteq \Gamma$ satisfying that
\begin{enumerate}[label=(\roman*)]
\setlength\itemsep{0.25em}
\item \label{l1} $e^* \in \overline{\mathbb{T} \Gamma_{e^*}}^{w^*}$,
\item \label{l2} $\sup_{x^* \in \Gamma_{e^*}} \rho(x^*) <1$,
\item \label{l3} for every $\eps>0$ and a nonempty relatively $w^*$-open subset $U$ of $\Gamma_{e^*}$, there are a nonempty subset $V \subseteq U$, $x_1^* \in V$, $e \in S_X$, $F \in \Lin(X)$ such that 

\begin{enumerate}[label=(\roman*')]
\setlength\itemsep{0.25em}
\item \label{l1'} $\|F(e)\|=\|F\|=1$,
\item \label{l2'} $x_1^*(F(e))=1$,
\item  \label{l3'}$F^*(x_1^*)=x_1^*$,
\item \label{l4'} If we let $V_1 := \{ x^* \in \Gamma \colon \|F^*(x^*)\| + (1-\eps) \|x^* - F^*(x^*)\| \leq 1 \}$, then $|x^*(F(e))| \leq \rho(x_1^*)$ for any $x^* \in \Gamma \setminus V_1$,
\item  \label{l5'}$\dist (F^*(x^*), \operatorname{aco}\{0,V\}) < \eps$ for all $x^* \in \Gamma$,
\item  \label{l6'}$|v^*(e)-1| \leq \eps$ for all $v^* \in V$.
\end{enumerate}
\end{enumerate}

We also recall the notion of $\Gamma$-flat operators: a function $f$ from a topological space $\mathcal{T}$ to a metric space $M$ is called \emph{openly fragmented} if for every nonempty open subset $U \subseteq \mathcal{T}$ and any $\varepsilon>0$ there exists a nonempty open subset $V\subseteq U$ with $\diam (f(V)) <\varepsilon.$ Given a set $\Gamma\subseteq Y^*$ and $T \in \Lin(X,Y)$, we say that $T$ is \emph{$\Gamma$-flat} if $T^* \restricted_{\Gamma}\colon (\Gamma, w^*) \to X^*$ is openly fragmented. It is known that every Asplund operator $T\in\Lin(X,Y)$ is $\Gamma$-flat for every subset $\Gamma \subseteq B_{Y^*}$ (\cite[Example A]{CGKS2018}), and if $(\Gamma, w^*)$ is a discrete topological space, then every operator $T \in \Lin(X,Y)$ is $\Gamma_0$-flat for any $\Gamma_0\subseteq \Gamma$ (\cite[Example C]{CGKS2018}).

It was observed in \cite{CGKS2018} that $C(K)$ spaces, uniform algebras, and Banach spaces with property $\beta$ (introduced in \cite{Lin}) have ACK$_\rho$ structure. Besides, the property is stable by $\ell_\infty$-sums and by taking injective tensor products. Property quasi-$\beta$ was introduced in \cite{AAP2} as a generalization of property $\beta$ which still implies property B. As mentioned earlier, the notion of quasi-ACK structure is more general. Examples of Banach spaces having quasi-ACK structure include those with ACK$_\rho$ structure as well as those with property quasi-$\beta$ \cite[Remark 2.2]{CJ23}. Moreover, there is a Banach space that has quasi-ACK structure, while failing to have both ACK$_\rho$ structure and property quasi-$\beta$ \cite[Proposition 3.6]{CJ23}.

The next is the most general result that we are able to obtain. Even though the hypotheses seem to be very technical, we will be able to deduce many interesting consequences.

\begin{theorem}\label{theo:smooth_dense}
Let $X$ and $Y$ be Banach spaces such that $Y$ has quasi-$\ACK$ structure with a $1$-norming set $\Gamma \subseteq B_{Y^*}$ and $\rho\colon \Gamma \rightarrow [0,1)$. Let $T \in \Lin(X,Y)$ be a $\Gamma$-flat operator such that $T$ is $\Gamma_0$-flat for every $\Gamma_0 \subseteq \Gamma$. 
\begin{enumerate}
    \itemsep0.25em
    \item If $\SE(B_X)$ is dense in $X^*$, then given $\varepsilon>0$ there exists $R \in \ASE (X,Y)$ such that $\|R-T\| <\eps$; 
    \item If the set $\smo ({X^*}) \cap \NA(X)$ is dense in $X^*$ (in particular, if $X$ is strictly convex), then given $\eps >0$ there exists $R \in \Lin (X, Y)$ such that $\|R-T\| <\eps$, $\|R (x_0)\| = \|R\|$ for some $x_0 \in S_X$, and whenever a sequence $(x_n) \subseteq B_X$ satisfies that $\|Rx_n\| \to \|R\|$, there exists a subsequence $(x_{\sigma(n)}) \subseteq (x_n)$ such that $(x_{\sigma(n)})$ converges weakly to $\theta x_0$ for some $\theta \in \mathbb{T}$.
\end{enumerate} 
\end{theorem}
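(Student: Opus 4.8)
The plan is to adapt the quasi-$\ACK$ machinery of \cite{CJ23} so as to manufacture, from the $\Gamma$-flat operator $T$, a nearby operator $R$ whose range is ``peaked'' at a single direction, and then to transfer this peaking to the domain using the hypotheses on $X^*$. Normalize $\|T\|=1$. Since $\Gamma$ is $1$-norming, $\|T\|=\sup_{\gamma\in\Gamma}\|T^*\gamma\|$, so one first locates $\gamma\in\Gamma$ with $\|T^*\gamma\|$ close to $1$ and resolves $\gamma$ against extreme points of $B_{Y^*}$ to enter the structure. The openly fragmented behaviour of $T^*\restricted_\Gamma$ (i.e.\ $\Gamma_0$-flatness for every $\Gamma_0\subseteq\Gamma$) allows one to shrink relatively $w^*$-open subsets of the relevant $\Gamma_{e^*}$ while keeping the oscillation of $T^*$ small; iterating the quasi-$\ACK$ conditions \ref{l1'}--\ref{l6'} along this shrinking produces a functional $x_1^*\in\Gamma$, a point $e\in S_Y$, and $F\in\Lin(Y)$ with $\|Fe\|=\|F\|=1$, $x_1^*(Fe)=1$, $F^*(x_1^*)=x_1^*$, and, decisively, the peaking estimate $|x^*(Fe)|\leq\rho(x_1^*)<1$ for every $x^*\in\Gamma\setminus V_1$.

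\textbf{Construction of $R$ and the range peaking.} With $F$ in hand I would build $R$ as a small perturbation of $F\circ T$ together with a rank-one correction in the direction $e$ driven by a chosen domain functional $u^*\in X^*$; the freedom in selecting $u^*$ is exactly where the density hypotheses enter. Concretely, take $u^*$ from the prescribed dense set approximating (a normalization of) $T^*x_1^*$---a strongly exposing functional of $B_X$ in case (a), an element of $\smo(X^*)\cap\NA(X)$ in case (b)---and arrange the correction so that $R^*x_1^*=u^*$ and $\|R\|=\|R^*x_1^*\|=\|u^*\|$, using homogeneity so that strong exposure, smoothness and norm attainment are insensitive to the normalizing scalar. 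The $\rho(x_1^*)<1$ estimate of \ref{l4'} together with \ref{l6'} then forces the \emph{range peaking}: any $(x_n)\subseteq B_X$ with $\|Rx_n\|\to\|R\|$ admits rotations $\theta_n\in\mathbb{T}$ with $\re x_1^*\bigl(R(\theta_n x_n)\bigr)\to\|R\|$, that is, $\re u^*(\theta_n x_n)\to\|u^*\|$. (In the terminology of Section~\ref{sec:pre}, $R$ is being pushed into $\UQNA$ with $x_1^*$ strongly exposing $\overline{R(B_X)}$ at $\|R\|\,e$; cf.\ Lemma~\ref{lem:strexp}.)

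\textbf{Pulling the peaking back to the domain.} The domain hypothesis now closes each case. In (a), $u^*\in\SE(B_X)$ strongly exposes some $x_0\in S_X$, so from $\re u^*(\theta_n x_n)\to\|u^*\|$ the definition of strong exposure gives $\theta_n x_n\to x_0$ in norm; after passing to a subsequence on which $\theta_n\to\theta$ we get $x_{\sigma(n)}\to\overline{\theta}\,x_0$ in norm, whence $R(x_{\sigma(n)})\to\overline{\theta}\,Rx_0$, and since $\|Rx_0\|\geq\re x_1^*(Rx_0)=\re u^*(x_0)=\|u^*\|=\|R\|$ we conclude $R\in\ASE(X,Y)$. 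In (b), $u^*\in\smo(X^*)\cap\NA(X)$ attains its norm at some $x_0\in S_X$; applying the \v{S}mulyan lemma at the smooth point $u^*$ of $X^*$ against the constant comparison sequence $x_0\in B_{X^{**}}$, every sequence in $B_{X^{**}}$ norming $u^*$ converges to $x_0$ in $\sigma(X^{**},X^*)$. In particular $\theta_n x_n\to x_0$ weakly in $X$, and passing to a subsequence with $\theta_n\to\theta$ yields $x_{\sigma(n)}\to\overline{\theta}\,x_0$ weakly, which is precisely the asserted conclusion (with $\|Rx_0\|=\|R\|$ as before).

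\textbf{Main obstacle.} The genuine difficulty lies in the second step: running the quasi-$\ACK$ extraction so that the resulting $R$ simultaneously stays within $\varepsilon$ of $T$, has its norm computed exactly through $x_1^*$ via the $\rho$-peaking, and has driving functional $R^*x_1^*$ lying (after normalization) in the prescribed dense subset of $X^*$. Reconciling the range-side construction inherited from \cite{CJ23} with the substitution of the natural functional $T^*x_1^*$ by a nearby strongly exposing or smooth norm-attaining one, all while preserving the peaking estimate, is the technical heart of the argument; by contrast, the domain pull-back is a clean application of the definition of strong exposure and of the \v{S}mulyan lemma. The dichotomy between norm convergence in (a) and weak convergence in (b) reflects exactly the difference in strength---strong exposure versus smoothness together with mere norm attainment---between the two domain hypotheses.
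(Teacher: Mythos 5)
Your overall route is the paper's own --- the quasi-ACK extraction from \cite{CJ23} followed by a domain pull-back via strong exposure in (a) and \v{S}mulyan's lemma in (b) --- but there is a genuine gap at the decisive step, and you have in fact located it yourself in your ``main obstacle'' paragraph without resolving it. You assert that the operator coming straight out of the quasi-ACK construction already satisfies the range peaking: that any $(x_n)\subseteq B_X$ with $\|Rx_n\|\to\|R\|$ admits rotations $\theta_n\in\mathbb{T}$ with $\re u^*(\theta_n x_n)\to\|u^*\|$, ``forced'' by \ref{l4'} together with \ref{l6'}. What the construction actually delivers (and all that the paper cites from the proof of \cite[Theorem 2.3]{CJ23}) is much weaker: an operator $S(x)=x_r^*(x)F(e)+(1-\tilde{\varepsilon})(\mathrm{Id}_Y-F)T(x)$ with $\|S-T\|$ small, $S^*y_2^*=x_r^*=u^*$, and $\|S\|=\|Sx_r\|=\|u^*\|=1$. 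Knowing that the norm of $S$ is computed and attained through the single functional $y_2^*$ does not preclude norming sequences whose images are normed through other functionals of the $1$-norming set $\Gamma$; the estimates behind $\|S\|\le 1$ are only upper bounds and do not, as stated, give the dichotomy you need (``either normed essentially through $y_2^*$, hence $|u^*(x_n)|\to 1$, or $\|Sx_n\|$ bounded away from $1$''). The published proof deliberately circumvents establishing any such peaking for $S$ by performing a \emph{second}, dominance-creating rank-one perturbation. In case (b) this is Lemma \ref{lem:smo+na}(a): one replaces $S$ by $S'=S+\varepsilon\,(S^*y_2^*)\otimes Sx_r$, so that $\|S'\|=1+\varepsilon$ while $\|S'x\|\le 1+\varepsilon|(S^*y_2^*)(x)|$ for $x\in B_X$; this strict gain is what forces $|(S^*y_2^*)(x_n)|\to 1$ along any norming sequence, and only then does your \v{S}mulyan argument (which is exactly the one in the proof of that lemma) apply. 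In case (a) the analogous closing step is the perturbation of \cite[Proposition 3.14]{cgmr2020} (see also \cite[Lemma 1.2]{JMR23}), invoked because $S$ attains its norm at the strongly exposed point $x_r$; the paper never claims $S$ itself is in $\ASE(X,Y)$.

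Two further remarks. First, your device for ``entering the structure'' is vaguer than what is needed: one cannot simply resolve a near-norming $\gamma\in\Gamma$ against extreme points; the paper first perturbs $T$ so that $T^*$ attains its norm at some $e^*\in\operatorname{ext}(B_{Y^*})$, via Johannesen's theorem \cite[Theorem 5.8]{Lima1978}, which is what makes the set $\Gamma_{e^*}$ and condition \ref{l1} available (also, flatness and the quasi-ACK conditions are applied once each, not iterated). Second, on the positive side, your domain pull-backs are correct and coincide with the paper's mechanisms --- strong exposure of $x_0$ by $u^*$ yielding norm convergence and hence $\ASE$ in (a), smoothness of $u^*$ plus \v{S}mulyan yielding weak convergence to $x_0$ in (b) --- so inserting the one missing idea, the additional rank-one perturbation that makes the chosen functional strictly dominant, turns your sketch into essentially the published proof.
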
 

We need some preliminary results. Recall the result given in \cite[Lemma 1.2]{JMR23} that if $T \in \Lin (X,Y)$ is such that $\|T\|= \|T^* y_0^*\|$ and $T^* y_0^* \in \SE (B_X)$ for some $y_0^* \in B_{Y^*}$, then $T$ attains its norm at a strongly exposed point; so $T \in \cl{\ASE(X,Y)}$. Let us present an analog version of this result considering more general elements in the set $\smo(X^*) \cap \NA(X)$ rather than those in $\SE(B_X)$. 

\begin{lem}\label{lem:smo+na}
Let $X$ and $Y$ be Banach spaces.
\begin{enumerate}
  \item If $T\in \ELL(X, Y)$ satisfies that If $\|T\| = \|T^* y_0^* \|$ with $y_0^*\in S_{Y^*}$ and $T^* y_0^* \in \smo({X^*})$ attains its norm at $x_0 \in B_X$, then, given $\varepsilon>0$, there is $S \in \Lin (X,Y)$ such that $\|S(x_0)\| = \|S\|$, $\|S-T\|<\varepsilon$, and whenever $(x_n) \subseteq B_X$ satisfies $\|Sx_n\| \to \|S\|$, there is a subsequence $(x_{\sigma(n)})$ of $(x_n)$ weakly convergent to $\theta x_0$ for some $\theta \in \mathbb{T}$.
  \item If $\mathcal{I}$ is an operator ideal contained in $\mathcal{DP}(X,Y)$ and $T\in \mathcal{I}(X, Y)$ satisfies $\|T\| = \|T^* y_0^* \|$ with $y_0^*\in S_{Y^*}$ and $T^* y_0^* \in \smo({X^*}) \cap \NA(X)$, then $T \in \overline{\mathcal{I}(X,Y) \cap \RSE (X,Y)}$.
\end{enumerate}
\end{lem}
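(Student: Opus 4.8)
The plan is to prove (a) by a rank-one perturbation of $T$ in the spirit of Lemma~\ref{lem:strexp+density}, using the smoothness of $x_0^* := T^*y_0^*$ in place of a strongly exposing functional, and then to deduce (b) by observing that the perturbation keeps us inside the ideal $\mathcal{I}$, so that the Dunford--Pettis property upgrades the weak convergence furnished by (a) to the norm convergence required for membership in $\RSE(X,Y)$. First I would record a consequence of the hypotheses of (a). Writing $x_0^*=T^*y_0^*$ and normalizing (by a rotation of $x_0$) so that $x_0^*(x_0)=\|x_0^*\|=\|T\|$, the chain $\|T\|=x_0^*(x_0)=y_0^*(Tx_0)\le\|Tx_0\|\le\|T\|$ forces $\|Tx_0\|=\|T\|$ and $\|x_0\|=1$; thus $T$ already attains its norm at $x_0$ and $y_0^*(Tx_0)=\|T\|$. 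Assuming $T\neq 0$ (the zero case being trivial), I set $y_0:=Tx_0$ (so $\|y_0\|=\|T\|$) and define
\[
Sx := Tx + \varepsilon\, x_0^*(x)\,\frac{y_0}{\|y_0\|} \qquad (x\in X),
\]
a rank-one perturbation of $T$ with $\|S-T\|\le\varepsilon\|T\|$.

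Next I would compute the norm of $S$. Since $y_0^*(Tx)=x_0^*(x)$ and $y_0^*(y_0)=\|y_0\|=\|T\|$, one gets $\re y_0^*(Sx)=(1+\varepsilon)\re x_0^*(x)$, while the triangle inequality gives $\|Sx\|\le\|Tx\|+\varepsilon|x_0^*(x)|\le(1+\varepsilon)\|T\|$ on $B_X$; evaluating at $x_0$ shows $Sx_0=(1+\varepsilon)Tx_0$, so $\|S\|=(1+\varepsilon)\|T\|$ is attained at $x_0$. Now if $(x_n)\subseteq B_X$ satisfies $\|Sx_n\|\to\|S\|$, then from $\|Sx_n\|\le\|Tx_n\|+\varepsilon|x_0^*(x_n)|$ together with $\|Tx_n\|\le\|T\|$ and $|x_0^*(x_n)|\le\|x_0^*\|$, each summand must approach its maximum, so in particular $|x_0^*(x_n)|\to\|x_0^*\|$. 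Choosing $\theta_n\in\mathbb{T}$ with $x_0^*(\theta_n x_n)=|x_0^*(x_n)|\to\|x_0^*\|$, I regard $(\theta_n x_n)$ as a sequence in $B_{X^{**}}$.

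The heart of the argument, and the step I expect to be the main obstacle, is to pass from $x_0^*(\theta_n x_n)\to\|x_0^*\|$ to weak convergence of $(\theta_n x_n)$ in $X$. Here I would apply \v{S}mulyan's lemma to the smooth point $x_0^*$ of $X^*$, comparing the sequence $(\theta_n x_n)$ with the constant sequence $(x_0)$ in $B_{X^{**}}$; both norm $x_0^*$ in the limit, since $x_0(x_0^*)=x_0^*(x_0)=\|x_0^*\|$, so smoothness yields $\theta_n x_n-x_0\to 0$ in the weak-star topology $\sigma(X^{**},X^*)$, which is exactly weak convergence of $\theta_n x_n$ to $x_0$ in $X$. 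Passing to a subsequence along which $\theta_n\to\theta\in\mathbb{T}$ then gives that $(x_{\sigma(n)})$ converges weakly to $\overline{\theta}x_0$, proving (a). The care needed here is precisely the bookkeeping in the bidual and the complex rotations $\theta_n$.

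For (b), the extra hypothesis $x_0^*\in\NA(X)$ supplies the point $x_0\in S_X$ at which $x_0^*$ attains its norm, so (a) applies verbatim and produces $S$ with $\|S-T\|<\varepsilon$. Since $S-T$ is rank-one and $T\in\mathcal{I}(X,Y)$, we have $S\in\mathcal{I}(X,Y)\subseteq\mathcal{DP}(X,Y)$. Finally, given $(x_n)\subseteq B_X$ with $\|Sx_n\|\to\|S\|$, part (a) furnishes a subsequence $(x_{\sigma(n)})$ converging weakly to $\overline{\theta}x_0$; because $S$ is Dunford--Pettis it maps this weakly convergent sequence to a norm-convergent one, so $Sx_{\sigma(n)}\to\overline{\theta}\,Sx_0$ in norm. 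This is exactly the defining property of $\RSE(X,Y)$ witnessed by $x_0$, whence $S\in\mathcal{I}(X,Y)\cap\RSE(X,Y)$ with $\|S-T\|<\varepsilon$, giving $T\in\overline{\mathcal{I}(X,Y)\cap\RSE(X,Y)}$.
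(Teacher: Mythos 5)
Your proposal is correct and follows essentially the same route as the paper: the identical rank-one perturbation $Sx = Tx + \varepsilon\,(T^*y_0^*)(x)\,Tx_0$, the decomposition forcing $|x_0^*(x_n)|\to\|x_0^*\|$, \v{S}mulyan's lemma at the smooth point $T^*y_0^*$ (viewing $B_X\subseteq B_{X^{**}}$) to get weak convergence up to rotations, and the Dunford--Pettis upgrade to norm convergence for (b) --- you merely spell out the \v{S}mulyan bookkeeping that the paper leaves implicit. The only cosmetic point is that your bound $\|S-T\|\le\varepsilon\|T\|$ should be rescaled (e.g.\ run the construction with $\varepsilon/(1+\|T\|)$, or normalize $\|T\|=1$ as the paper does) to obtain the stated $\|S-T\|<\varepsilon$.
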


\begin{proof}  Although the proof is similar to that of Lemma \ref{lem:strexp+density}, we present it here for the sake of completeness.
    
    (a). Let $x_0 \in B_X$ be such that $\|T^* y_0^*\| = (T^* y_0^*) (x_0)$. For simplicity, we may assume that $\|T\|=1$. Given $\varepsilon>0$, define $S \in \Lin (X,Y)$ by 
    \[
    S(x) = T(x) + \varepsilon \,(T^* y_0^*) (x) T x_0.
    \]
    Then $\|S\| \leq 1 + \varepsilon$, and 
    \[
    Sx_0 = (1+\varepsilon) Tx_0;
    \]
    hence $S$ attains its norm at $x_0$. Suppose that $\|Sx_n\| \to \|S\|$. By the definition of $S$, this implies that, passing to a subsequence, $(T^* y_0^*) (x_n) \to \theta \in \mathbb{T}$. Since $T^* y_0^*$ is a smooth point, we observe that $\bigl(\overline{\theta} x_n\bigr)$ converges to $x_0$ weakly and this completes the proof.

(b). Note from the proof of (a) that $S$ belongs to the same ideal $\mathcal{I}(X,Y)$ whenever $T$ belongs to $\mathcal{I}(X,Y)$. Moreover, if a sequence $(x_{\sigma(n)})$ converges weakly to $\theta x_0$ for some $\theta \in \mathbb{T}$, then the sequence $(Sx_{\sigma(n)} )$ converges to $\theta Sx_0$ in the norm topology; so $S \in \RSE(X,Y)$.
\end{proof}

\begin{proof}[Proof of Theorem~\ref{theo:smooth_dense}]
The proof is based on the arguments of the proof of \cite[Theorem 2.3]{CJ23}, now with the help of the above lemma. Let $\varepsilon>0$ and assume that $\|T\|=1$. By a result of Johannesen \cite[Theorem~5.8]{Lima1978}, we assume further that $T^* \in \Lin (Y^*, X^*)$ attains its norm at some $e^* \in \text{ext}(B_{Y^*})$. Let $\Gamma_{e^*} \subseteq \Gamma$ the corresponding set of quasi-ACK structure of $Y$ satisfying \ref{l1}--\ref{l3}. Find $y_0^* \in \Gamma_{e^*}$ and $x_0 \in S_X$ such that $|y_0^* (T(x_0))| > 1- \varepsilon$. Consider the set 
    \[
    U_0 := \{ y^* \in Y^*\colon |y^* ( T(x_0)) | > 1-\varepsilon\},
    \]
    which is weak$^*$-open. Since $T$ is $\Gamma_{e^*}$-flat from the hypothesis, there is weak$^*$-open subset $U_r \subseteq U_0$ with $U_r \cap \Gamma_{e^*} \neq \emptyset$ such that $\text{diam} (T^* (U_r \cap \Gamma_{e^*}))<\varepsilon$. Choose $y_1^* \in U_r \cap \Gamma_{e^*}$ and let $x_1^* = T^* (y_1^*)$. 

    Let $\Pi$ be either the set $\SE(B_X)$ or $\smo(X^*) \cap \NA(X)$. Suppose that the set $\Pi$ is dense in $X^*$. Then we can find $x_r^* \in \Pi$ with $\|x_r^*\|=1$ such that $\|x_r^* - x_1^*\| < \varepsilon$. Say $x_r^*$ attains its norm at $x_r \in S_X$. By quasi-ACK structure of $Y$, applying to $\varepsilon>0$ and $U:= U_r \cap \Gamma_{e^*}$, we can find $V \subseteq U$, $y_2^* \in V$, $e \in S_Y$ and $F \in \Lin (Y)$ satisfying \ref{l1'}--\ref{l6'}. Define $S \in \Lin (X,Y)$ by 
    \[
    S(x) := x_r^* (x) F(e) + ( 1 - \tilde{\varepsilon}) (\text{Id}_Y - F) T(x), \quad \text{ for every } x \in X,
    \]
    where $\tilde{\varepsilon}= \frac{5\varepsilon}{1-r_{e^*}+5\varepsilon}$ and $r_{e^*} = \sup_{y^* \in \Gamma_{e^*} } \rho (y^*) < 1$. We remark that $S^* (y_2^*) = x_r^*$. 

    Using conditions \ref{l1'}--\ref{l6'}, one can verify that $\|S-T\| \leq 5 \varepsilon + 2 \tilde{\varepsilon}$ and 
    \[
    \|S\| = \|Sx_r\| = |(S^* y_2^*)(x_r)| = |x_r^* ( x_r) | = 1
    \]
    (for a detailed proof, we refer to the proof of \cite[Theorem 2.3]{CJ23}).

    If the set $\Pi$ were $\SE(B_X)$, then the point $x_r$ should be a strongly exposed point. This shows that there exists $R \in \ASE(X,Y)$ so that $\|R-S\| < \varepsilon$ \cite[Proposition 3.14]{cgmr2020}. This proves (a). Next, suppose that $\Pi = \smo(X^*) \cap \NA(X)$. In this case, $S^* y_2^*  \in \smo (X^*)$ attains its norm at $x_r$ with $\|S \| = \|S^* y_2^*\|$; hence Lemma \ref{lem:smo+na}.(a) yields the result (b). 
\end{proof} 

\begin{remark} Let $X$ be a Banach space. 
\begin{enumerate}
    \itemsep0.25em 
    \item By definition, $\SE(B_X) \subseteq \smo(X^*)\cap\NA(X)$. As expected, this inclusion may be strict (even for reflexive Banach spaces). For instance, if we denote $(\ell_2, \| \cdot\|_W)$ the renorming of $\ell_2$ due to Smith \cite{Smith}, then it is known that the norm of $(\ell_2, \| \cdot\|_W^*)$ is G\^{a}teaux differentiable but not Fr\'echet differentiable (see the paragraph after \cite[Theorem 5.6.12]{megginson}). As an example of non-reflexive Banach spaces, consider $X = c$, that is, the space of all convergent sequences endowed with the supremum norm. Then $\SE(B_X) = \emptyset$ while the set $\smo(X^*) \cap \NA(X)$ is dense in $X^* (=\ell_1)$. 
    Finally, note that if $X=c_0$, then the set $\smo(X^*)\cap\NA(X)$ turns to be the empty set. So, the results with the assumption on the denseness of $\smo(X^*)\cap\NA(X)$ cannot be applied to $X=c_0$. 
    However, the denseness of certain $\RSE$ operators on $c_0$ can be derived by Corollary \ref{corollary--bigoneforfiniterank-compact-fordomainspaces}.
    \item We have the following.
\[
\begin{tikzcd}
X \text{ is strictly convex} \arrow[d, "(b)", Leftrightarrow] & X^* \text{ is smooth} \arrow[l, "(a)"', Rightarrow]     & \\ 
 \NA(X)\subseteq\smo(X^*) \arrow[r, "(c)", Rightarrow] & \NA(X)\cap\smo(X^*) \text{ is dense}
\end{tikzcd}
\]
The implication (a) is quite well known (and easy to prove), and the converse of (a) is not true. The equivalence (b) is a consequence of the Hahn-Banach theorem and \v{S}mulyan's lemma. The implication (c) is obvious. 
\end{enumerate}    
\end{remark}

As mentioned at the beginning of this section, Banach spaces with property quasi-$\beta$ have quasi-ACK structure. More precisely, if $Y$ has property quasi-$\beta$ witnessed by the set $A=\{x_\alpha^*\colon \alpha \in \Lambda\} \subseteq S_{X^*}$, $\{x_\alpha\colon \alpha \in \Lambda\}\subseteq S_X$, and a real-valued function $\rho$ on $A$, then $Y$ has quasi-ACK structure with respect to the $1$-norming set $\Gamma:=A$ and the above function $\rho$ (see \cite[Remark 2.2]{CJ23}). Besides, still in the case when $Y$ has property quasi-$\beta$, every operator $T \in \Lin (X,Y)$ is $\Gamma_0$-flat for any $\Gamma_0 \subseteq \Gamma$ since $(\Gamma,w^*)$ is a discrete topological space. This discussion allows us to give the following consequences of Theorem~\ref{theo:smooth_dense}.

\begin{cor}\label{cor:smooth_dense}
Let $X$ and $Y$ be Banach spaces, and let an operator ideal $\mathcal{I}$ be given. If $\SE(B_X)$ is dense in $X^*$, $Y$ has quasi-$\ACK$ structure, and $\mathcal{I}$ is contained in the operator ideal $\mathcal{A}$ of Asplund operators, then 
     \[
    \mathcal{I}(X,Y) \subseteq \overline{ \mathcal{I}(X,Y) \cap \ASE(X,Y) }
     \]
\end{cor}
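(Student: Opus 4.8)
The plan is to deduce this directly from Theorem~\ref{theo:smooth_dense}(a), the only extra work being to keep the approximating operator inside the ideal $\mathcal{I}$. First I would fix $T \in \mathcal{I}(X,Y)$ and $\varepsilon>0$, and let $\Gamma \subseteq B_{Y^*}$ together with $\rho$ witness the quasi-$\ACK$ structure of $Y$. Since $\mathcal{I} \subseteq \mathcal{A}$, the operator $T$ is Asplund, and therefore $\Gamma_0$-flat for \emph{every} $\Gamma_0 \subseteq B_{Y^*}$ by \cite[Example A]{CGKS2018}; in particular $T$ is $\Gamma$-flat and $\Gamma_0$-flat for every $\Gamma_0 \subseteq \Gamma$. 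Thus all the hypotheses of Theorem~\ref{theo:smooth_dense} hold, and, using the density of $\SE(B_X)$ in $X^*$, part~(a) yields $R \in \ASE(X,Y)$ with $\|R-T\|<\varepsilon$.

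What remains---and what I expect to be the only delicate point---is to check that this $R$ can be chosen in $\mathcal{I}(X,Y)$. For this I would reopen the construction in the proof of Theorem~\ref{theo:smooth_dense}. There one first passes from $T$ to
\[
S(x) = x_r^*(x)\, F(e) + (1-\tilde\varepsilon)(\mathrm{Id}_Y - F)\,T(x),
\]
the sum of the rank-one operator $x_r^* \otimes F(e)$ and the left composition $(1-\tilde\varepsilon)(\mathrm{Id}_Y - F)\circ T$. As $\mathcal{I}$ is an operator ideal, it contains all finite-rank operators and is stable under composition on the left with bounded operators on $Y$; hence $S \in \mathcal{I}(X,Y)$. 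The operator $R$ is then obtained from $S$ through the strongly-exposed-point mechanism of \cite[Proposition 3.14]{cgmr2020} (equivalently \cite[Lemma~1.2]{JMR23}), which alters $S$ only by a finite-rank (in fact rank-one) perturbation. Consequently $R-S$ is finite-rank, so $R \in \mathcal{I}(X,Y)$ as well.

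Combining the two paragraphs, $R \in \mathcal{I}(X,Y)\cap\ASE(X,Y)$ with $\|R-T\|$ arbitrarily small, which gives $T \in \overline{\mathcal{I}(X,Y)\cap\ASE(X,Y)}$ and proves the inclusion. The main obstacle is precisely the bookkeeping above: since Theorem~\ref{theo:smooth_dense} is phrased only as a norm-approximation statement, one must verify from its proof that every modification used there is either a left composition with a bounded operator on $Y$ or a finite-rank perturbation---both of which preserve membership in an arbitrary operator ideal. All the remaining ingredients (Asplund $\Rightarrow$ $\Gamma$-flat, and the ideal axioms) are standard.
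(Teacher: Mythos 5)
Your proposal is correct and takes essentially the same route as the paper: its proof likewise uses that an Asplund operator is $\Gamma_0$-flat for every $\Gamma_0 \subseteq B_{Y^*}$ and then simply notes, by inspecting the proof of Theorem~\ref{theo:smooth_dense}, that the approximant $R$ remains in $\mathcal{I}(X,Y)$. Your bookkeeping (the rank-one summand plus left composition defining $S$, the finite-rank perturbation producing $R$ via \cite[Proposition 3.14]{cgmr2020}, and implicitly the rank-one Johannesen--Lima perturbation at the start of that proof) merely makes explicit what the paper leaves to the reader.
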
 

\begin{proof}
Notice from the proof of Theorem \ref{theo:smooth_dense} that when $T$ belongs to $\mathcal{I}(X,Y)$, so does $R$. Moreover, as mentioned earlier, $T$ is $\Gamma$-flat for every subset $\Gamma \subseteq B_{Y^*}$, so the denseness follows from (a) of Theorem \ref{theo:smooth_dense}. 
\end{proof}

\begin{cor}\label{cor:smooth_dense2}
Let $X$ and $Y$ be Banach spaces, and an operator ideal $\mathcal{I}$ be given. If the set $\smo (X^*) \cap \NA(X)$ is dense in $X^*$, $Y$ has property quasi-$\beta$, and $\mathcal{I}$ is contained in $\DP(X,Y)$, then 
\[
\mathcal{I}(X,Y) \subseteq \cl{ \mathcal{I} (X,Y) \cap \RSE (X,Y) }.
\]
\end{cor}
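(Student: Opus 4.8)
The plan is to follow the construction carried out in the proof of Theorem~\ref{theo:smooth_dense}(b) and to replace its final appeal to Lemma~\ref{lem:smo+na}(a) by an appeal to Lemma~\ref{lem:smo+na}(b); this is precisely what upgrades the conclusion from mere weak convergence to membership in $\RSE$, at the cost of the extra hypothesis $\mathcal{I}\subseteq\DP(X,Y)$. First I would recall, as in the paragraph preceding the statement, that property quasi-$\beta$ guarantees that $Y$ has quasi-$\ACK$ structure with respect to a $1$-norming set $\Gamma \subseteq S_{Y^*}$ for which $(\Gamma, w^*)$ is discrete; consequently every operator in $\Lin(X,Y)$ is $\Gamma_0$-flat for every $\Gamma_0 \subseteq \Gamma$. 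In particular, any $T \in \mathcal{I}(X,Y)$ automatically satisfies the flatness hypotheses of Theorem~\ref{theo:smooth_dense}, so no Asplundness or flatness assumption on $T$ is needed here.

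Next, fixing $T \in \mathcal{I}(X,Y)$ and $\eps>0$, I would run the argument of the proof of Theorem~\ref{theo:smooth_dense} with $\Pi = \smo(X^*)\cap\NA(X)$ (dense by hypothesis). This produces an operator of the form
\[
S(x) = x_r^*(x)\,F(e) + (1-\tilde{\eps})(\text{Id}_Y - F)T(x),
\]
together with a functional $y_2^* \in V \subseteq \Gamma \subseteq S_{Y^*}$ and a point $x_r^* \in \smo(X^*)\cap\NA(X)$, such that $\|S-T\|$ is as small as desired, $S^*(y_2^*) = x_r^*$, and $\|S\| = \|S^*(y_2^*)\|$. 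The point I would stress is that $S$ stays inside the ideal: the summand $x_r^* \otimes F(e)$ is of rank one and hence lies in every operator ideal, while $(1-\tilde{\eps})(\text{Id}_Y - F)T$ is the composition of $T \in \mathcal{I}(X,Y)$ with the bounded operator $(1-\tilde{\eps})(\text{Id}_Y - F)$ on $Y$, hence also lies in $\mathcal{I}(X,Y)$. Therefore $S \in \mathcal{I}(X,Y)$.

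Finally, since $\mathcal{I}\subseteq\DP(X,Y)$, $S \in \mathcal{I}(X,Y)$, $y_2^* \in S_{Y^*}$, and $S^*(y_2^*) = x_r^* \in \smo(X^*)\cap\NA(X)$ with $\|S\| = \|S^*(y_2^*)\|$, Lemma~\ref{lem:smo+na}(b) applies and yields $S \in \overline{\mathcal{I}(X,Y)\cap\RSE(X,Y)}$. As $S$ can be chosen arbitrarily close to $T$ and the right-hand side is norm-closed, this gives $T \in \overline{\mathcal{I}(X,Y)\cap\RSE(X,Y)}$, completing the proof. I expect the only delicate point to be the bookkeeping: verifying that the functional $y_2^*$ supplied by the quasi-$\ACK$ construction is genuinely a norming functional in $S_{Y^*}$ with $S^*(y_2^*) \in \smo(X^*)\cap\NA(X)$, so that the hypotheses of Lemma~\ref{lem:smo+na}(b) are met verbatim, and that the construction keeps $S$ within $\mathcal{I}$; both facts are essentially read off from the explicit form of $S$.
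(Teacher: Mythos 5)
Your proposal is correct and follows essentially the same route as the paper: reduce property quasi-$\beta$ to quasi-ACK structure with $(\Gamma,w^*)$ discrete (so flatness is automatic), run the construction from Theorem~\ref{theo:smooth_dense} with $\Pi=\smo(X^*)\cap\NA(X)$, and invoke Lemma~\ref{lem:smo+na}(b) in place of part (a) via the hypothesis $\mathcal{I}\subseteq\DP(X,Y)$. Your explicit verification that $S$ remains in $\mathcal{I}(X,Y)$ (rank-one perturbation plus composition of $T$ with a bounded operator) is a detail the paper leaves implicit, and it checks out.
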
 

\begin{proof}
As $Y$ is assumed to have property quasi-$\beta$, we have that $Y$ has quasi-ACK structure with a certain $1$-norming set $\Gamma \subseteq B_{Y^*}$ and a real-valued function on $\Gamma$. Again, as mentioned above, every member in $\Lin (X,Y)$ is $\Gamma_0$-flat for any subset $\Gamma_0 \subseteq \Gamma$. Using the definition of Dunford-Pettis property as in Lemma \ref{lem:smo+na}.(b), we conclude the result. 
\end{proof}

An interesting particular case is given when $Y$ is a closed subspace of $c_0(\Gamma)$, as then $Y$ has property quasi-$\beta$ \cite[Example 3.2]{JMR23}.

\begin{cor}
Let $X$ be a Banach space such that $\smo (X^*) \cap \NA(X)$ is dense in $X^*$ and let $Y$ be a closed subspace of $c_0(\Gamma)$ for some set $\Gamma$. If $\mathcal{I}$ is an operator ideal  contained in $\DP$, then 
\[
\mathcal{I}(X,Y) \subseteq \cl{ \mathcal{I} (X,Y) \cap \RSE (X,Y) }.
\]
\end{cor}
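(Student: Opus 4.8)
The plan is to deduce the statement immediately from Corollary~\ref{cor:smooth_dense2}, which already delivers the desired inclusion $\mathcal{I}(X,Y) \subseteq \cl{\mathcal{I}(X,Y) \cap \RSE(X,Y)}$ under exactly three hypotheses: that $\smo(X^*) \cap \NA(X)$ be dense in $X^*$, that $Y$ have property quasi-$\beta$, and that $\mathcal{I}$ be contained in $\DP$. Two of these three are assumed verbatim in the present statement, so the only genuine task is to verify the middle hypothesis, namely that the specific range space $Y$ — a closed subspace of $c_0(\Gamma)$ — possesses property quasi-$\beta$.

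For this I would simply invoke the structural fact, recorded in \cite[Example 3.2]{JMR23}, that every closed subspace of $c_0(\Gamma)$ enjoys property quasi-$\beta$. The mechanism behind this is that the restrictions to $Y$ of the coordinate functionals $e_\gamma^*$ (for $\gamma \in \Gamma$) furnish a $1$-norming family which, by the defining feature of $c_0(\Gamma)$ that each vector has only finitely many coordinates near its norm, is weak-star discrete; this discreteness is precisely what yields the quasi-$\beta$ data (the norming set together with its associated function $\rho$). Once $Y$ is known to have property quasi-$\beta$, all three hypotheses of Corollary~\ref{cor:smooth_dense2} hold and the conclusion follows at once.

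Since the substantive analysis has already been carried out in Theorem~\ref{theo:smooth_dense} and its Corollary~\ref{cor:smooth_dense2}, the present statement is a genuine specialization rather than a new argument, and I expect no real obstacle. The only external input is the geometric observation about subspaces of $c_0(\Gamma)$, which is cited rather than reproved. I would therefore keep the proof to essentially a single sentence: apply \cite[Example 3.2]{JMR23} to conclude that $Y$ has property quasi-$\beta$, and then apply Corollary~\ref{cor:smooth_dense2} with the given domain space $X$ and operator ideal $\mathcal{I} \subseteq \DP$.
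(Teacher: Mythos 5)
Your proposal is correct and follows exactly the paper's own route: the paper derives this corollary by citing \cite[Example 3.2]{JMR23} for the fact that every closed subspace of $c_0(\Gamma)$ has property quasi-$\beta$, and then applying Corollary~\ref{cor:smooth_dense2} with the given hypotheses on $X$ and $\mathcal{I}\subseteq\DP$. Your heuristic aside about the coordinate functionals is extra color, but since you (like the paper) rely on the citation rather than reproving it, nothing further is needed.
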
 

This corollary can be applied of course to compact operators.

\begin{cor}\label{cor:smo_dense_c0gamma}
Let $X$ be a Banach space such that $\smo (X^*) \cap \NA(X)$ is dense in $X^*$ and let $Y$ be a closed subspace of $c_0(\Gamma)$ for some set $\Gamma$. Then 
\[
\comp(X,Y) \subseteq \cl{ \comp (X,Y) \cap \RSE (X,Y) }.
\]    
\end{cor}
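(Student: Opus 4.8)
The plan is to obtain this statement as an immediate specialization of the preceding corollary to the operator ideal $\mathcal{I} = \comp$ of compact operators. Indeed, that corollary already establishes $\mathcal{I}(X,Y) \subseteq \cl{\mathcal{I}(X,Y) \cap \RSE(X,Y)}$ for any operator ideal $\mathcal{I}$ contained in $\DP$, under exactly the two hypotheses we are assuming on $X$ and $Y$. Thus the only point that requires verification is the inclusion $\comp(X,Y) \subseteq \DP(X,Y)$, that is, that every compact operator is a Dunford--Pettis operator.

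First I would check this standard containment. Let $T \in \comp(X,Y)$ and suppose $(x_n) \subseteq X$ converges weakly to some $x \in X$. Then $(x_n)$ is bounded, so by compactness of $T$ the set $\{Tx_n \colon n \in \N\}$ is relatively norm-compact. On the other hand, $T$ is weak-to-weak continuous, so $Tx_n \to Tx$ weakly. Consequently, every norm-convergent subsequence of $(Tx_n)$ has limit $Tx$, and since by relative compactness every subsequence of $(Tx_n)$ admits a norm-convergent further subsequence, the whole sequence $(Tx_n)$ converges to $Tx$ in norm. Hence $T \in \DP(X,Y)$, which proves $\comp(X,Y) \subseteq \DP(X,Y)$.

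With this in hand, the conclusion follows by invoking the preceding corollary with $\mathcal{I} = \comp$: since $\smo(X^*) \cap \NA(X)$ is dense in $X^*$, the space $Y$ is a closed subspace of $c_0(\Gamma)$, and $\comp$ is an operator ideal contained in $\DP$, we obtain
\[
\comp(X,Y) \subseteq \cl{\comp(X,Y) \cap \RSE(X,Y)}.
\]
There is essentially no genuine obstacle here; the statement is a direct application of the previous result, and the only (routine) ingredient is the membership $\comp \subseteq \DP$ verified above, which holds for arbitrary Banach spaces $X$ and $Y$.
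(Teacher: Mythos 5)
Your proposal is correct and matches the paper's (implicit) proof exactly: the paper derives this corollary by applying the preceding corollary to the ideal $\mathcal{I}=\comp$, and your verification that every compact operator is Dunford--Pettis (via weak-to-weak continuity plus relative norm-compactness of the image of a bounded sequence) is the standard fact the paper takes for granted.
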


Let us remark that the condition on the domain space in Corollary~\ref{cor:smo_dense_c0gamma} is essential, as shown by Proposition~\ref{prop:no-universal-RSE-B-compact} when applied to subspaces of $c_0(\Gamma)$ that fail the approximation property.

The next consequence follows as a combination of Corollary \ref{cor:smooth_dense2} and the arguments of Johnson--Wolfe in \cite{JW79}. 

\begin{cor}\label{cor:L1_predual}
Let $X$ be a Banach space such that $\smo (X^*) \cap \NA(X)$ is dense in $X^*$. Then, for any $L_1$-predual space $Y$, we have 
$$
\mathcal{K}(X,Y) = \overline{ \mathcal{K}(X,Y) \cap \RSE(X,Y)}.
$$ 
\end{cor}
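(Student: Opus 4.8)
The plan is to reduce the problem to the case of a finite-dimensional range isometric to some $\ell_\infty^n$, where Corollary~\ref{cor:smooth_dense2} is directly applicable: indeed $\ell_\infty^n$ enjoys Lindenstrauss' property $\beta$, hence property quasi-$\beta$, and every compact operator is Dunford--Pettis (a weakly convergent sequence is bounded, so its image has norm-convergent subsequences, all of which must converge to the image of the weak limit), so that $\comp(X,\ell_\infty^n)\subseteq \DP(X,\ell_\infty^n)$. Thus, once the range is pushed into such a subspace, Corollary~\ref{cor:smooth_dense2} will produce the desired $\RSE$ approximation.

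The core of the argument is the structural step, which is where the ideas of Johnson--Wolfe enter. First I would use that $T\in\comp(X,Y)$ has separable range and that, by the local characterization of $L_1$-preduals together with a standard closing-off argument, the separable subspace $Z:=\overline{\spann}\,T(X)$ is contained in a separable $L_1$-predual subspace of $Y$, which by the Lazar--Lindenstrauss representation can be written as $\overline{\bigcup_n F_n}$ with $F_1\subseteq F_2\subseteq\cdots$ finite-dimensional and each $F_n$ isometric to some $\ell_\infty^{k_n}$. Since $\ell_\infty^{k_n}$ is $1$-injective, there is a norm-one projection $P_n$ of this subspace onto $F_n$; as $P_n$ fixes $F_m$ pointwise for $m\le n$ and the $P_n$ are uniformly bounded, $P_n\to\mathrm{Id}$ in the strong operator topology, hence uniformly on the compact set $\overline{T(B_X)}$. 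Consequently, given $\varepsilon>0$ I can select $n$ with $\|P_nT-T\|<\varepsilon$, and set $F:=F_n$ (isometric to $\ell_\infty^{k_n}$ inside $Y$) and $P:=P_n$.

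It then remains to apply Corollary~\ref{cor:smooth_dense2} to the finite-rank operator $PT\colon X\to F$: since $\smo(X^*)\cap\NA(X)$ is dense in $X^*$ and $F$ has property quasi-$\beta$, there is $\tilde S\in\comp(X,F)\cap\RSE(X,F)$ with $\|\tilde S-PT\|<\varepsilon$. Because $F$ is isometrically embedded in $Y$, the operator $S\colon X\to Y$ obtained from $\tilde S$ by composing with the inclusion has the same norm, is again compact, and remains $\RSE$ (the supremum value and the convergence $Sx_{\sigma(n)}\to\theta Sx_0$ are computed with the same norm in $F$ and in $Y$). Combining the estimates yields $\|S-T\|\le\|\tilde S-PT\|+\|PT-T\|<2\varepsilon$, which gives $\comp(X,Y)\subseteq\overline{\comp(X,Y)\cap\RSE(X,Y)}$; the reverse inclusion is immediate since $\comp(X,Y)$ is a closed ideal, so equality holds.

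The main obstacle I anticipate is the structural reduction in the second paragraph: one must secure genuinely isometric (not merely $(1+\delta)$-isomorphic) copies of $\ell_\infty^{k_n}$ together with norm-one projections onto them, and control these simultaneously on the compact set $\overline{T(B_X)}$ in the possibly non-separable space $Y$. This is exactly the point where the $L_1$-predual hypothesis is used in full strength, and where I would invoke the Lindenstrauss/Lazar--Lindenstrauss theory carefully (including the closing-off argument that keeps us inside a separable $L_1$-predual subspace). By contrast, the transfer of the $\RSE$ property through the isometric embedding $F\hookrightarrow Y$ and the final combination of estimates are routine.
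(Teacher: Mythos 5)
Your proposal is correct and follows essentially the same route as the paper: approximate the compact operator $T$ by $PT$, where $P$ is a norm-one projection onto a subspace of $Y$ isometric to $\ell_\infty^m$, and then apply Corollary~\ref{cor:smooth_dense2} (using that $\ell_\infty^m$ has property $\beta$, hence quasi-$\beta$, and that $\comp\subseteq\DP$), transferring the $\RSE$ property back through the isometric inclusion. The only difference is that the paper obtains the structural step by citing the argument of Johnson--Wolfe \cite[Lemma~3.4]{JW79}, whereas you reconstruct it in full (separable reduction inside the $L_1$-predual, the Lazar--Lindenstrauss representation $\overline{\bigcup_n F_n}$ with $F_n$ isometric to $\ell_\infty^{k_n}$, $1$-injectivity to get the norm-one projections, and uniform convergence of $P_n$ on the compact set $\overline{T(B_X)}$) --- a valid proof of the same lemma rather than a different approach.
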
 

\begin{proof}
Let $T \in \mathcal{K}(X,Y)$ and $\varepsilon>0$ be given. Arguing like in \cite[Lemma~3.4]{JW79} we can find a subspace $E$ of $Y$ which is isometric to $\ell_{\infty}^m$ for some $m\in \N$ and a projection $P\colon Y\to E \subseteq Y$ with $\|P\|=1$ such that $\|T-PT\| < \varepsilon$. 
As $\ell_\infty^m$ has property $\beta$, there is $S \in \RSE(X,Y)$ such that $\|PT-S\|<\eps$ by Corollary \ref{cor:smooth_dense2}, hence $\|T-S\| < 2\varepsilon$.
\end{proof} 

Finally, let us remark that, even though it is straightforward to show that $\NA(X,\ell_\infty^m)$ is dense in $\ELL(X,\ell_\infty^m)$ for every $m\in \N$ and every Banach space $X$, we do not know if the set $\RSE(X,\ell_\infty^m)$ is also dense without the assumption that $\smo(X^*)\cap \NA(X)$ is dense in $X^*$.

\section{RSE Analogues of Zizler's and Lindenstrauss's Results on Adjoint Operators}\label{sec:Zizler}
We start by showing that there is no analogous results for $\RSE$ operators to Zizler's \cite{Zizler} and Lindenstrauss' \cite{Lin} on the denseness on the set of operators whose adjoint or biadjoint attain their norm.

\begin{example}
The sets 
\begin{enumerate}
\itemsep0.25em
  \item $\{T\in \ELL(\ell_1,\ell_1)\colon T^*\in \RSE(\ell_\infty,\ell_\infty)\}$, 
  \item $\{T\in \ELL(c_0,c_0)\colon T^{**}\in \RSE(\ell_\infty,\ell_\infty)\}$,
\end{enumerate}
are not dense in $\ELL(\ell_1,\ell_1)$ and $\ELL(c_0,c_0)$, respectively.
\end{example}

\begin{proof}
For (a), observe that, since the set of isomorphisms is an open set, if the set in (a) were dense, it would exists an isomorphism $S\in \ELL(\ell_1,\ell_1)$ such that $S^*\in \RSE(\ell_\infty,\ell_\infty)$; being $S^*$ also an isomorphism, Remark~\ref{rem:monomorf} would give that $S^*\in \ASE(\ell_\infty,\ell_\infty)$, but this is impossible since the unit ball of $\ell_\infty$ fails to contain strongly exposed points. The argument for (b) is completely analogous.
\end{proof}

It is then clear that we need conditions on $X$, $Y$, or on the involved operator to approximate it by operators whose adjoint are $\RSE$. The most general result we are able to present in this line deals with operators with closed range and such that their image of the unit ball have the Asplund property. Let us first introduce the needed definitions.

Let $D \subseteq X$ be a bounded subset and $F\colon X \rightarrow \mathbb{R}$ any function. Then $F$ is \textit{$D$-differentiable at} $x \in X$ if there exists $f \in X^*$ such that 
\[
\lim_{t \rightarrow 0+}  \sup_{d \in D} \left| \frac{ F(x+td)- F(x)}{t} - f (d) \right| = 0.
\]
In this case, $f$ is called the \textit{$D$-gradient} of $F$ at $x$. A bounded subset $D$ of $X$ is said to have the \textit{Asplund property} if each convex continuous function $F \colon X\rightarrow \mathbb{R}$ is $D$-differentiable on a residual subset of $X$. 
{It is known \cite[Theorem 5.3.5]{bourgin} that $T \in \Lin (X,Y)$ is an Asplund operator (i.e., $T$ factors through an Asplund space) if and only if the set $T(B_X)$ has the Asplund property}. Moreover, it is equivalent to that $T^* (B_{Y^*})$ is an RNP set \cite[Theorem 5.2.3]{bourgin}. It is also known that if $T$ is an isomorphism, then $D \subseteq X$ has the Asplund property if and only if $T(D) \subseteq Y$ has the Asplund property \cite[Lemma 5.3.1]{bourgin}. In particular, if $D \subseteq X$ has the Asplund property and $Z$ is a closed subspace containing the set $D$, then $D \subseteq Z$ has the Asplund property. 

\begin{prop}\label{prop:closed_range}
Let $X$ and $Y$ be Banach spaces, and $T \in \Lin (X,Y)$ be an operator with closed range. Suppose that {$T(B_X)$} has the Asplund property. Let $\Phi\colon T^* (B_{Y^*}) \rightarrow \mathbb{R}$ be a bounded above and upper semicontinuous function. Then, given $\eps >0$ and $x_1 \in X$, there exists $x_0 \in X$ such that $\Phi +   x_0$ strongly exposes $T^* (B_{Y^*})$ and $\|Tx_0 - Tx_1 \| <\eps$. 
\end{prop}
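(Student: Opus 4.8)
The plan is to read the statement as an instance of Stegall's variational principle for $w^*$-compact sets with the Radon–Nikod\'ym property, where the closeness requirement $\|Tx_0-Tx_1\|<\eps$ is met by combining the \emph{density} of the admissible perturbations with the open-mapping behaviour of a closed-range operator. First I would record the geometry of $D:=T^*(B_{Y^*})$. Since $T^*$ is $w^*$-to-$w^*$ continuous and $B_{Y^*}$ is $w^*$-compact by Banach--Alaoglu, $D$ is a $w^*$-compact convex subset of $X^*$. Because $T(B_X)$ has the Asplund property, $T$ is an Asplund operator, and hence $D$ is an RNP set by the equivalences recalled just before the statement (\cite[Theorem 5.3.5 and Theorem 5.2.3]{bourgin}). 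Thus $D$ is a $w^*$-compact convex RNP set, which is exactly the setting in which an upper semicontinuous, bounded-above function can be strongly exposed after perturbation by elements of the \emph{predual}.

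The next step is to make the predual perturbations transparent through the closed-range factorisation $T=\tilde T\circ q$, where $q\colon X\to Z:=X/\ker T$ is the quotient map and $\tilde T\colon Z\to Y$ is an isomorphism onto its (closed) range. The adjoint $q^*\colon Z^*\to X^*$ is an isometry onto $(\ker T)^\perp$ and carries $D':=\tilde T^*(B_{Y^*})$ onto $D$; moreover, for $x\in X$ and $d'\in D'$ one has $(q^*d')(x)=d'(qx)$, so an element $x\in X$ acts on $D$ exactly as $qx\in Z$ acts on the isometric copy $D'\subseteq Z^*$. Since $q$ is surjective, the perturbations of $\Phi$ available from $X$ correspond precisely to all perturbations of the transported function $\Phi':=\Phi\circ q^*$ by elements of $Z$; and since $T(B_X)$ has the Asplund property, $Z$ is an Asplund space, so $Z^*$ has the RNP and $D'$ is again a $w^*$-compact convex RNP set. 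To this clean situation I would apply Stegall's variational principle (see \cite{bourgin}): the set of $z\in Z$ for which $\Phi'+z$ strongly exposes $D'$ is residual, in particular dense, in $Z$.

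Finally I would extract $x_0$. Assuming $T\neq 0$ (the zero case being trivial), I choose $z_0\in Z$ with $\|z_0-qx_1\|<\eps/\|\tilde T\|$ such that $\Phi'+z_0$ strongly exposes $D'$, and lift it to any $x_0\in X$ with $qx_0=z_0$. The transport identity above shows that $\Phi+x_0$ then strongly exposes $D=T^*(B_{Y^*})$ (the isometry $q^*$ preserves norm convergence and matches the linear parts), while $\|Tx_0-Tx_1\|=\|\tilde T(z_0-qx_1)\|\le \|\tilde T\|\,\|z_0-qx_1\|<\eps$, as required. I expect the main obstacle to be the variational principle itself: one must justify that for a $w^*$-compact RNP set the strongly exposing perturbations may be taken from the predual rather than from the bidual, which is precisely where the $w^*$-compactness of $D$ (equivalently, the Asplund property of $Z$) is indispensable. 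The transport of boundedness and upper semicontinuity through the isometry $q^*$, the surjectivity of $q$ used to lift $z_0$, and the final closeness estimate are then routine.
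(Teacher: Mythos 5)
Your reduction is sound in outline and genuinely different from the paper's proof. The paper argues directly inside the range $Z:=T(X)$: it builds the support function $\sigma$ on $Z\times\mathbb{K}$, uses the Asplund property of $T(B_X)\times B_{\mathbb{K}}$ to get generic $D$-differentiability of $\sigma$, converts $\|\cdot\|_{T(B_X)\times B_{\mathbb{K}}}$-smallness into $\|\cdot\|_{B_Z\times B_{\mathbb{R}}}$-smallness via the open mapping theorem (this is where the closed range enters), and then runs a three-step perturbed optimization argument --- in effect re-proving, in a form adapted to the hypothesis that only $T(B_X)$ (not $B_X$) has the Asplund property, the variational principle you want to quote. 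Your version instead passes to the quotient $Z=X/\ker T$, where that asymmetry disappears: since $\tilde T$ is an isomorphism onto its closed range, $q(B_X)$ has the Asplund property by \cite[Lemma 5.3.1]{bourgin}, and since $q(B_X)$ is dense in $B_Z$ (indeed contains $cB_Z$ for some $c>0$), $B_Z$ has it too, so $Z$ is Asplund --- this small argument is the one step you leave implicit and should write out. The isometric transport through $q^*$ (which is a norm- and $w^*$-homeomorphism onto $(\ker T)^\perp$, so both the upper semicontinuity of $\Phi'=\Phi\circ q^*$ and the norm convergence of maximizing sequences carry over), the lifting of $z_0$ through the surjection $q$, and the estimate $\|Tx_0-Tx_1\|\le\|\tilde T\|\,\|z_0-qx_1\|$ are all correct and routine, as you say.

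The genuine weak point is the black box itself and its attribution. What you need is the \emph{predual} (dual-space) form of Stegall's principle: $Z$ Asplund, $C\subseteq Z^*$ $w^*$-compact, $\Phi$ bounded above and upper semicontinuous, and then the set of $z\in Z$ such that $\Phi+\re z$ strongly exposes $C$ is residual in $Z$. The classical Stegall principle, in the form found in \cite{bourgin}, perturbs an RNP set $C\subseteq E$ by elements of $E^*$; applied to $C=D'\subseteq Z^*$ it produces perturbations from $Z^{**}$, and since $Z$ is in general not residual (not even somewhere dense, in the Baire sense relevant here) in $Z^{**}$, it does not yield predual perturbations --- the descent from $Z^{**}$ to $Z$ is exactly the content you are assuming, and it is where the $w^*$-compactness of $C$ and Asplundness of $Z$ do real work. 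So the pointer ``see \cite{bourgin}'' does not close the argument. The correct citation is \cite[Theorem 2.6]{AAGM2010-variational}, which is precisely the result the paper states its proof is modelled on; note also that the upper semicontinuity in play is with respect to the norm topology (this is what the intended application, $\Phi=\|\cdot\|_{X^*}$ in Corollary~\ref{cor:Asplund_closed_range}, provides), and that theorem accommodates it. With that citation substituted, your quotient reduction is a complete and arguably cleaner proof than the paper's: it isolates the closed-range hypothesis in the two places it is truly used (the Asplundness of $Z$ and the final estimate), instead of threading it through an open-mapping step inside the differentiability argument.
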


The proof of this result is based on \cite[Theorem 2.6]{AAGM2010-variational}, with appropariate modifications to fit our setting. To make the proof self-contained, we recall the following lemma from that paper.

\begin{lemma}[\mbox{\cite[Lemma 2.4]{AAGM2010-variational}}]\label{lem:AAGC_lemma24}
    Let $X$ be a Banach space, $W$ a dense subset of $X \times \mathbb{K}$ such that $\mathbb{R}^+ W \subseteq W$ and $\phi\colon X \rightarrow \mathbb{R}$ is a convex continuous function satisfying $\phi(0)<0$. Then, the subset $W \cap \operatorname{Graph}(\phi)$ is dense in $\operatorname{Graph}(\phi)$.
\end{lemma}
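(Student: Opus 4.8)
The goal is to show that each point $(a,\phi(a))\in\operatorname{Graph}(\phi)$ is a limit of points of $W\cap\operatorname{Graph}(\phi)$. The plan is to exploit the two hypotheses on $W$ — density in $X\times\mathbb K$ and invariance under $\mathbb R^+$ — together with the normalization $\phi(0)<0$, and to reduce matters to a one-dimensional intermediate value argument along rays through the origin. I would argue first for $\mathbb K=\mathbb R$, which carries the whole geometric content: the graph lies in $X\times\mathbb R$, and since $\phi$ is continuous the region $\Omega:=\{(x,t)\in X\times\mathbb R: t<\phi(x)\}$ lying strictly below the graph is open. Only positive real dilations will be used, and the case $\mathbb K=\mathbb C$ is handled by the same reasoning after restricting attention to the real slice $X\times\mathbb R$ that contains $\operatorname{Graph}(\phi)$.

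Fix $(a,\phi(a))$ and $\varepsilon>0$. First I would step slightly below the graph: for a small $\eta>0$ the point $(a,\phi(a)-\eta)$ lies in $\Omega$, and since $W$ is dense I may choose $(u,s)\in W$ as close to it as I wish, in particular close enough that $(u,s)\in\Omega$ (that is, $s<\phi(u)$) while $\|u-a\|$ and $|s-(\phi(a)-\eta)|$ stay under control. Now consider the ray $\lambda\mapsto(\lambda u,\lambda s)$, which remains inside $W$ by the cone property $\mathbb R^+W\subseteq W$, and set $g(\lambda):=\phi(\lambda u)-\lambda s$. This $g$ is convex, being the sum of the convex function $\lambda\mapsto\phi(\lambda u)$ and the affine function $\lambda\mapsto-\lambda s$, and satisfies $g(0)=\phi(0)<0$ and $g(1)=\phi(u)-s>0$. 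Hence the intermediate value theorem yields $\lambda^{*}\in(0,1)$ with $g(\lambda^{*})=0$, i.e.\ $\lambda^{*}s=\phi(\lambda^{*}u)$, so that $(\lambda^{*}u,\lambda^{*}s)=(\lambda^{*}u,\phi(\lambda^{*}u))\in W\cap\operatorname{Graph}(\phi)$.

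It remains to check that this point is within $\varepsilon$ of $(a,\phi(a))$, for which the essential point is that $\lambda^{*}$ is close to $1$. Here I would use convexity quantitatively: from $g(\lambda)\le(1-\lambda)g(0)+\lambda g(1)$ one sees that $g\le 0$ at $\lambda_0:=|g(0)|/(|g(0)|+g(1))$, and since the convex function $g$ is negative on $[0,\lambda^{*})$ and positive on $(\lambda^{*},1]$ this forces $\lambda^{*}\ge\lambda_0$, whence
\[
1-\lambda^{*}\le\frac{g(1)}{|g(0)|+g(1)}\le\frac{\phi(u)-s}{|\phi(0)|}.
\]
Because $\phi(u)-s$ is of the order of $\eta$ plus the controllable density error while $|\phi(0)|$ is a fixed positive constant, $1-\lambda^{*}$ is small; combined with $\|u-a\|$ small and the boundedness of $\|u\|$ (as $u\to a$), this makes $\|\lambda^{*}u-a\|$ small, and then $|\phi(\lambda^{*}u)-\phi(a)|$ small by continuity of $\phi$. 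Choosing $\eta$ and the density tolerance small enough therefore delivers the desired $\varepsilon$-approximation.

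The main technical obstacle is precisely this coordination of parameters: one must land strictly below the graph, which requires the density error to be smaller than $\eta$, while at the same time keeping $g(1)=\phi(u)-s$ — and hence $1-\lambda^{*}$ — small, so that the dilation by $\lambda^{*}$ does not push $u$ away from $a$. The convexity estimate $1-\lambda^{*}\le(\phi(u)-s)/|\phi(0)|$ is exactly what makes these two requirements compatible, and it is the place where the normalization $\phi(0)<0$ is indispensable, since it provides the fixed positive denominator $|\phi(0)|$ that controls how far $\lambda^{*}$ can drift from $1$.
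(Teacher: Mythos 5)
For $\mathbb{K}=\mathbb{R}$ your argument is correct and complete, and it is essentially the proof of the cited source: the paper itself gives no proof of this lemma, only the reference to \cite[Lemma 2.4]{AAGM2010-variational} together with the parenthetical remark that ``the (almost) same proof'' covers the complex case, and the original proof is precisely your ray-scaling plus intermediate-value argument. Your quantitative convexity step is also right: $g(\lambda_0)\le(1-\lambda_0)g(0)+\lambda_0 g(1)=0$ at $\lambda_0=|g(0)|/(|g(0)|+g(1))$, the zero $\lambda^*$ of the convex function $g$ is unique in $(0,1)$ with $g<0$ to its left, so $\lambda^*\ge\lambda_0$ and $1-\lambda^*\le(\phi(u)-s)/|\phi(0)|$, which is exactly what makes the dilation harmless; the coordination of $\eta$ with the density tolerance is routine and you describe it adequately.

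The genuine gap is the complex case, which you dispose of in one clause: ``restricting attention to the real slice $X\times\mathbb{R}$.'' That reduction is invalid. Density of $W$ in $X\times\mathbb{C}$ gives no information whatsoever about $W\cap(X\times\mathbb{R})$, which can be empty, and the cone hypothesis cannot repair this: a positive real dilation preserves the (nonzero) imaginary part of the second coordinate, so the ray through $(u,s)$ never meets $\operatorname{Graph}(\phi)\subseteq X\times\mathbb{R}$ unless $s$ was real to begin with, and your intermediate-value equation $\lambda^* s=\phi(\lambda^* u)$ has no solution. Worse, no argument from the stated hypotheses alone can exist: the set $W:=\{(x,t)\in X\times\mathbb{C}\colon \operatorname{Im} t\neq 0\}$ is open and dense in $X\times\mathbb{C}$, satisfies $\mathbb{R}^+W\subseteq W$, and is disjoint from $\operatorname{Graph}(\phi)$ (whose points have real second coordinate, and $(0,0)$ is excluded since $\phi(0)<0$), so the complex case read literally is false. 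What your method does yield in the complex setting --- apply the real argument to the real Banach space $X\times i\mathbb{R}$ with $\tilde\phi(x,i\beta):=\phi(x)$ --- is density of the points of $W$ in the larger set $\{(x,t)\colon \re t=\phi(x)\}$; alternatively, the literal conclusion can be recovered under an extra invariance such as $(x,t)\mapsto(x,\theta t)\in W$ for all $\theta\in\mathbb{T}$, which makes $W\cap(X\times\mathbb{R})$ dense in $X\times\mathbb{R}$ and reduces to your real case. Your write-up supplies neither, so the complex case as you state it is unproved (and, as stated, unprovable); to be fair, the paper's own remark glosses over the very same point.
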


Let us point out that \cite[Lemma 2.4]{AAGM2010-variational} was origianlly stated for the case $\mathbb{K}=\mathbb{R}$, however, the (almost) same proof applies to Lemma \ref{lem:AAGC_lemma24} as well.  

\begin{proof}[Proof of Proposition~\ref{prop:closed_range}]
For simplicity, let $Z := T(X)$ and $E:= T^* (B_{Y^*}) \subseteq X^*$. Let $\Phi\colon E \rightarrow \mathbb{R}$ be a bounded above and upper semicontinuous function, and consider $\sigma \colon Z \times \mathbb{K} \rightarrow \mathbb{R}$ by 
\[
\sigma (y,s) := \sup \left \{ \re \left\langle \frac{t}{1+M-\Phi(T^* y^*)} (y^*, -1), \, (y,s) \right\rangle \colon t \in [0,1], \,y^* \in B_{Y^*} \right\},
\]
where $M := \sup \Phi (E)$.

It is not difficult to check that $T(B_X) \times B_{\mathbb{K}} \subseteq Y \times \mathbb{K}$ has the Asplund property; therefore $T(B_X) \times B_{\mathbb{K}} \subseteq Z \times \mathbb{K}$ has the Asplund property. Since $\sigma$ is continuous and convex, we observe that $\sigma$ is $(T(B_X)\times B_{\mathbb{K}})$-differentiable on a $G_\delta$-dense subset $W$ of $Z \times \mathbb{K}$. Note that $\sigma (r (y,s)) = r \sigma (y,s)$ for every $(y,s)\in Z\times \mathbb{K}$ and $r\in\mathbb{R}^+$; so $\mathbb{R}^+ W \subseteq W$.

Put 
\[
H:= \left \{ \frac{t}{1+M-\Phi(T^* y^*)} (y^*, -1)\colon t \in [0,1], \, y^* \in B_{Y^*} \right\}.
\]
Observe that $H$ is closed, and $\sigma (y,s) = \sup \{ \re \langle (u^*, s^*), (y,s) \rangle \colon (u^*, s^*) \in H\}$.
\vspace{0.25em}

\textit{Step 1}: If $\sigma$ is $(T(B_X)\times B_{\mathbb{K}})$-differentiable at $(z_0, r_0) \in W$ with $(T(B_X) \times B_{\mathbb{K}})$-gradient $(z_0^*, r_0^*)$, then the element $(z_0, r_0)$ strongly exposes $H$ at $(z_0^*, r_0^*)$ in the following sense: 
\begin{align}
    \text{if $(u_n^*,s_n^* ) \in H$ satisfies that } \re \langle (u_n^*, s_n^*) ,  &(z_0, r_0) \rangle \to \sigma (z_0,r_0), \nonumber \\
    &\text{ then $\|(u_n^*, s_n^*)-(z_0^*, r_0^*) \|_{B_Z\times B_\mathbb{R}} \to 0$. } \tag{$*$} \label{eq:strong_exposure}
\end{align}
To prove the claim, given $\eps >0$, find $\delta >0$ such that 
\[
0<t \leq \delta \implies \sup_{ u \in T(B_X) \times B_\mathbb{K} } \left| \frac{ \sigma( (z_0,r_0) + t u )-\sigma ( z_0, r_0) }{t} - (z_0^*, r_0^*)(u) \right| < \eps. 
\]
Fix $\alpha = \eps \delta$
and suppose that $ (u^*, s^*) \in H$ satisfies that 
$$ \re \langle (u^*, s^*) ,  (z_0, r_0) \rangle > \sigma (z_0,r_0) - \alpha.$$
Then, for $(Tx, s)  \in T(B_X) \times B_{\mathbb{K}}$, we have 
\begin{align}
    \re \langle &(u^*, s^*) - (z_0^*, r_0^*), (Tx, s) \rangle \nonumber \\
    &= \frac{1}{\delta} \re \left[ \langle (u^*, s^*) ,  (z_0, r_0) + \delta (Tx, s) \rangle - \langle (u^*, s^*) ,  (z_0, r_0) \rangle - \langle (z_0^*, r_0^*), \delta (Tx, s) \rangle \right] \nonumber \\ 
    &\leq \frac{1}{\delta} \left[ \sigma ( (z_0, r_0) + \delta (Tx, s) )   - \re \langle (u^*, s^*) ,  (z_0, r_0) \rangle - \re  \langle (z_0^*, r_0^*), \delta (Tx, s) \rangle \right] \nonumber \\
    &\leq \frac{1}{\delta} \left[ \varepsilon \delta  + \sigma (z_0,r_0) - \re \langle (u^*, s^*) ,  (z_0, r_0) \rangle \right] < \varepsilon + \frac{\alpha}{\delta} = 2 \varepsilon. \nonumber 
\end{align}
Since $\theta ( T(B_X) \times B_{\mathbb{K}} ) = T(B_X)\times B_{\mathbb{K}}$ for all $\theta \in \mathbb{K}$ with $|\theta|=1$, we conclude that $\| (u^*, s^*) - (z_0^*, r_0^*)\|_{T(B_X) \times B_{\mathbb{K}}} < \eps$. As $T$ has a closed range, by open mapping theorem, we have that $\| (u^*, s^*) - (z_0^*, r_0^*) \|_{B_Z \times B_{\mathbb{R}} } < C \eps$ for some constant $C>0$. 
As a consequence, we have that every element $(z_0, r_0)$ in $W$ strongly exposes $H$ in the sense of \eqref{eq:strong_exposure} (at some $(z_0^*, r_0^*)$ where $(z_0^*, r_0^*)$ is the corresponding $(T(B_X) \times B_{\mathbb{K}})$-gradient). 
 
Next, define $\phi\colon Z = T(X) \rightarrow \mathbb{R}$ by 
\[
\phi (y):= \sup \{ \re \{  \Phi (T^*y^*) + y^* (y) - M - 1 \} \colon y^* \in B_{Y^*} \}, \quad \forall y \in Z. 
\]
Note that $\phi$ is continuous, convex, and $\phi(0)=-1<0$. 
By applying Lemma \ref{lem:AAGC_lemma24} to $W \subseteq Z \times \mathbb{K}$ and $\phi:Z\to\mathbb{R}$, we observe that 
the elements $(y, \phi (y)) \in Z \times \mathbb{R}$ strongly exposing $H$ in the sense of \eqref{eq:strong_exposure} form a dense subset in the graph of $\phi$.
\vspace{0.25em}

\textit{Step 2}: Given $x_0 \in X$, if $(Tx_0, \phi(Tx_0)) \in Z \times \mathbb{R}$ strongly exposes $H$ in the sense of \eqref{eq:strong_exposure} at 
\[
 \frac{t_0}{1+M-\Phi(T^* y_0^*)} (y_0^*, -1)
\]
for some $t_0 \in [0,1]$ and $y_0^* \in B_{Y^*}$, then $\Phi + \re x_0$ strongly exposes the set $E=T^* (B_{Y^*})$ at $T^* y_0^*$.
\vspace{0.25em}
 
To this end, note first that $t_0$ can be assumed to be $1$ and that 
\[
 1 \geq \frac{\re y_0^* (Tx_0) - \phi (Tx_0) } {1+M-\Phi(T^* y_0^*)} \geq \sup_{y^* \in B_{Y^*} } \frac{\re y^* (Tx_0) - \phi (Tx_0) } {1+M-\Phi(T^* y^*)} = 1. 
\]
This shows that $\Phi + \re x_0$ attains its maximum at $T^* y_0^* \in E$ with 
\[
(\Phi + \re x_0)(T^* y_0^*) = 1+M+\phi(Tx_0). 
\]
If $(y_n^*) \subseteq B_{Y^*}$ satisfies $(\Phi + \re x_0) (T^* y_n^*) \rightarrow 1+M+\phi(Tx_0)$, then this would imply that 
\begin{align*}
0 &\leq 1 -  \re \left \langle (Tx_0, \phi (Tx_0) ), \frac{(y_n^*, -1)}{1+M-\Phi(T^*y_n^*)} \right \rangle \\
&= \frac{ 1 + M - \Phi (T^*y_n^*) -\re y_n^* (Tx_0)+\phi(Tx_0) }{1+M-\Phi(T^*y_n^*)}\\
&\leq   1 + M - \Phi (T^* y_n^*)-\re y_n^* (Tx_0)+\phi(Tx_0)  \to 0.
\end{align*}
Using that $(Tx_0, \phi(Tx_0))$ strongly exposes $H$ in the sense of \eqref{eq:strong_exposure},  
\[
\left\| \left(  \frac{1}{1+M-\Phi(T^* y_n^*)} (y_n^*, -1) \right) - \frac{1}{1+M-\Phi(T^* y_0^*)} (y_0^*, -1)  \right\|_{B_Z \times B_\mathbb{K}} \to 0.
\]
This, in particular, shows that $\|y_n^* - y_0^*\|_{B_Z} \to 0$; hence $\|T^* y_n^* - T^* y_0^*\| \to 0$. 

\textit{Step 3}: 
Recall from the end of Step 1 that the set 
\[
\{ ( Tx, \phi (Tx) ) \text{ strongly exposing $H$ in the sense of \eqref{eq:strong_exposure}}  \colon x \in X\}
\]
is dense in $\{ (Tx, \phi(Tx ) ) \colon x \in X \}$. 
Therefore, given $x_1 \in X$ and $\eps >0$, we can find $x_0 \in X$ such that $\|Tx_1 - Tx_0\| < \varepsilon$ and $(Tx_0,\phi(Tx_0))$ strongly exposes $H$ in the sense of \eqref{eq:strong_exposure}. By Step 2, we then obtain that $\Phi + \re x_0$ strongly exposes the set $E$ at some $T^* y_0^*$ with $y_0^* \in B_{Y^*}$. 
\end{proof}

The result can be directly applied to Asplund operators to get the following nice consequence. 

\begin{cor}\label{cor:Asplund_closed_range}
Let $X$ and $Y$ be Banach spaces, and let $\mathcal{I}$ be an operator ideal contained in $\mathcal{A}$. If $T \in \mathcal{I}(X,Y)$ has closed range, then given $\eps >0$ there exists $S \in \mathcal{I} (X,Y)$ such that $\|S-T\| < \eps$ and $S^* \in \RSE(Y^*, X^*)$.
\end{cor}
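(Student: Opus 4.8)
The plan is to perturb $T$ by a single rank-one operator so that the adjoint attains its norm at a \emph{strongly exposed} point of $S^*(B_{Y^*})$, and then to upgrade this to $\RSE$ using that the image of the dual ball is automatically norm-closed. I assume $\|T\|=1$, the case $T=0$ being trivial. Since $\mathcal{I}\subseteq\mathcal{A}$, the operator $T$ is Asplund, so $T(B_X)$ has the Asplund property and Proposition~\ref{prop:closed_range} applies. The key is to feed the proposition the \emph{norm function}: set $E:=T^*(B_{Y^*})\subseteq X^*$ and $\Phi:=\|\cdot\|_{X^*}\restricted_{E}$, which is bounded above by $1$ and continuous, hence admissible. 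Taking $x_1=0$, the proposition yields $x_0\in X$ with $\|Tx_0\|<\eps$ such that $g:=\Phi+\re x_0$, that is $g(T^*y^*)=\|T^*y^*\|+\re y^*(Tx_0)$, strongly exposes $E$ at a point $e_0^*=T^*y_0^*$ with $y_0^*\in B_{Y^*}$. Writing $c:=\max_E g=g(e_0^*)$ and $r:=\|e_0^*\|$, and using that $B_{Y^*}$, hence $E$, is balanced, a comparison of $g(e_0^*)$ with $g(\theta e_0^*)$ for $\theta\in\mathbb{T}$ forces $y_0^*(Tx_0)$ to be a nonnegative real number $a$, so $c=r+a$; moreover $r>0$ since $\sup_E\|\cdot\|=1$ while the $\re x_0$ term is bounded by $\|Tx_0\|<\eps$ on $E$.

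Next I would define the rank-one perturbation $Sx:=Tx+\frac{y_0^*(Tx)}{r}\,Tx_0$, whose adjoint is $S^*y^*=T^*y^*+y^*(Tx_0)\frac{e_0^*}{r}$. Then $S\in\mathcal{I}(X,Y)$ (we add a finite-rank operator to $T$), and $\|S-T\|=\|Tx_0\|<\eps$. The central computation is a two-sided estimate for $\|S^*y^*\|$: choosing for each $y^*$ a rotation $\theta\in\mathbb{T}$ with $\theta\,y^*(Tx_0)=|y^*(Tx_0)|$ and using balancedness of $E$,
\begin{equation*}
\|S^*y^*\|\leq\|T^*y^*\|+|y^*(Tx_0)|=g\bigl(T^*(\theta y^*)\bigr)\leq c,
\end{equation*}
whereas a direct evaluation gives $\|S^*y_0^*\|=r+a=c$. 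Hence $\|S^*\|=c$ and $S^*$ attains its norm at $y_0^*$, with $z_0^*:=S^*y_0^*=(1+a/r)e_0^*\in c\,S_{X^*}$.

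Finally I would check that $S^*\in\UQNA(Y^*,X^*)$ towards $z_0^*$. If $(y_n^*)\subseteq B_{Y^*}$ satisfies $\|S^*y_n^*\|\to c$, then, picking $\theta_n\in\mathbb{T}$ with $\theta_n y_n^*(Tx_0)=|y_n^*(Tx_0)|$, the displayed estimate forces $g\bigl(T^*(\theta_n y_n^*)\bigr)\to c=\max_E g$; strong exposure of $E$ by $g$ then gives $\theta_n T^*y_n^*\to e_0^*$ in norm, whence also $\|T^*y_n^*\|\to r$ and $|y_n^*(Tx_0)|\to a$. Substituting into the formula for $S^*y_n^*$ yields $\theta_n S^*y_n^*\to z_0^*$, and passing to a subsequence along which $\theta_n$ converges gives $S^*y_{\sigma(n)}^*\to\theta z_0^*$ for some $\theta\in\mathbb{T}$; this is exactly uniquely quasi norm-attainment. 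Since $S^*$ is weak$^*$-to-weak$^*$ continuous, $S^*(B_{Y^*})$ is weak$^*$-compact, hence norm-closed, so Remark~\ref{rem:monomorf}(b) upgrades $\UQNA$ to $\RSE$, giving $S^*\in\RSE(Y^*,X^*)$ and completing the argument.

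The step I expect to be the main obstacle is precisely the translation from the strong exposure of $E$ by the \emph{non-linear, non-even} function $g=\|\cdot\|+\re x_0$ to the strong norm-attainment of the adjoint $S^*$; here the balancedness of $E$ is what repairs the mismatch between the even map $y^*\mapsto\|S^*y^*\|$ and the non-even $g$, both in the upper estimate and in the recovery of the limit $z_0^*$. A secondary point requiring care is the admissibility of the norm function $\Phi=\|\cdot\|_{X^*}$ as input to Proposition~\ref{prop:closed_range}, which I would justify from its norm-continuity and boundedness on $E$.
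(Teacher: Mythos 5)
Your proposal is correct and follows essentially the same route as the paper's proof: you feed the same function $\Phi=\|\cdot\|_{X^*}$ into Proposition~\ref{prop:closed_range} with the same rank-one perturbation, since your $S^*y^*=T^*y^*+y^*(Tx_0)\,e_0^*/r$ is exactly the paper's operator $R$ (which the paper defines directly on $Y^*$ and realizes as an adjoint via weak$^*$-to-weak$^*$ continuity --- your computation read backwards). Your explicit rotation bookkeeping and your final upgrade from $\UQNA$ to $\RSE$ via norm-closedness of $S^*(B_{Y^*})$ (where the paper instead notes norm-attainment at $y_0^*$ directly) are only minor variations --- if anything slightly more careful than the paper's own inequality chain, which tacitly rotates the $y_n^*$.
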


\begin{proof}
    Let $\varepsilon>0$ be given and consider $\Phi := \| \cdot \|_{X^*} \colon T^* (B_{Y^*}) \rightarrow \mathbb{R}$. By Proposition \ref{prop:closed_range}, we can find $x_0 \in S$ such that $\Phi + \text{Re}\, x_0$ strongly exposes $T^* (B_{Y^*})$ at some $T^* y_0^*$ and $\|Tx_0 \| < \varepsilon$. Define $R \in \Lin (Y^*, X^*)$ by 
    \[
    Ry^* = T^* y^* + \frac{T^*y_0^*}{\|T^* y_0^*\|} y^* (Tx_0), \quad y^* \in Z^*.
    \]
    Notice that $R$ is weak-star to weak-star continuous, so $R=S^*$ for some $S \in \Lin (X,Y)$. Since $S-T$ is a rank-one operator, we have $S \in \mathcal{I}(X,Y)$. Moreover, $\|R\| = (\Phi+\re x_0) (T^* y_0^*)$. Now, 
    if $(y_n^*) \subseteq B_{Y^*}$ is such that $\|Ry_n^*\| \rightarrow \|R\|$, then 
    $$
       \|R\| = [\Phi + \re x_0] (T^*y_0^*) \geq [\Phi + \text{Re}\, x_0] (T^* y_n^*)  \geq \|Ry_n^*\|  \to \|R\|; 
     $$ 
     so $(T^* y_n^*)$ converges to $T^* y_0^*$. It follows that 
    \[
    R y_n^* \rightarrow T^* y_0^* + \frac{T^*y_0^*}{\|T^* y_0^*\|} y_0^* (Tx_0) = R y_0^* \text{ and } \|R y_0^*\| = \|R\|;
    \]
    so $R \in \RSE(Y^*, X^*)$. 
\end{proof}

Observe that $T \in \mathcal{A}(X,Y)$ if and only if $T^* \in \mathcal{SRN}(Y^*,X^*)$ \cite[Theorem 5.2.11]{bourgin}. In this regard, Corollary \ref{cor:Asplund_closed_range} can be seen as a dual version of the denseness result of $\mathcal{SRN} \cap \UQNA$ in $\mathcal{SRN}$ proved in \cite[Theorem 3.1]{CCJM}.

If we deal with finite-rank operators, then the Asplundness condition and the closed rank condition automatically hold, so we get the following immediate consequence of Corollary  \ref{cor:Asplund_closed_range}. 

\begin{corollary}
Let $X$ and $Y$ be Banach spaces. Then, 
$$
\mathcal{FR}(X,Y) \subseteq \overline{ \{ R \in \mathcal{FR} (X,Y) \colon R^* \in \RSE (Y^*, X^*) \}}.
$$
If, moreover, $X^*$ or $Y$ has the approximation property, then
$$
\comp(X,Y) \subseteq \overline{ \{ R \in \mathcal{FR} (X,Y) \colon R^* \in \RSE (Y^*, X^*) \} }.
$$
\end{corollary}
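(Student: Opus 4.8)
The plan is to observe that the corollary follows at once from Corollary~\ref{cor:Asplund_closed_range} by specializing the ideal $\mathcal{I}$ to $\FR$. The two facts I would invoke are that every finite-rank operator is an Asplund operator (it factors through a finite-dimensional, hence Asplund, space, so $\FR \subseteq \mathcal{A}$) and that every finite-rank operator has closed range (its range is finite-dimensional and therefore closed). These are exactly the two hypotheses of Corollary~\ref{cor:Asplund_closed_range}.

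For the first containment, I would fix $T \in \FR(X,Y)$ and $\eps>0$. Since $T$ is an Asplund operator with closed range, Corollary~\ref{cor:Asplund_closed_range} applied with $\mathcal{I} = \FR$ yields $S \in \FR(X,Y)$ with $\|S-T\| < \eps$ and $S^* \in \RSE(Y^*, X^*)$. As $\eps$ was arbitrary, this gives
\[
\FR(X,Y) \subseteq \overline{\{ R \in \FR(X,Y)\colon R^* \in \RSE(Y^*, X^*)\}}.
\]

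For the second containment, I would use the standard fact that when $X^*$ or $Y$ has the approximation property, $\FR(X,Y)$ is dense in $\comp(X,Y)$. Hence, given $T \in \comp(X,Y)$ and $\eps>0$, I would first choose $T_0 \in \FR(X,Y)$ with $\|T - T_0\| < \eps/2$, and then apply the already-established first part to $T_0$ to obtain $S \in \FR(X,Y)$ with $S^* \in \RSE(Y^*, X^*)$ and $\|T_0 - S\| < \eps/2$, so that $\|T - S\| < \eps$. I do not anticipate any real obstacle here: the only substantive content is contained in Corollary~\ref{cor:Asplund_closed_range}, and both supporting facts (finite-rank operators are Asplund with closed range, and the approximation property gives density of $\FR$ in $\comp$) are routine.
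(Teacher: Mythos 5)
Your proposal is correct and follows exactly the paper's route: the paper derives this corollary as an immediate consequence of Corollary~\ref{cor:Asplund_closed_range}, noting that for finite-rank operators the Asplundness and closed-range hypotheses hold automatically, and the compact case is the same standard reduction via density of $\FR(X,Y)$ in $\comp(X,Y)$ under the approximation property of $X^*$ or $Y$. No gaps.
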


Let us observe that the adjoint of a compact operator is always norm-attaining, and that compact operators can be always approximated by $\UQNA$ operators \cite[Corollary 3.11]{CCJM}, but it is not clear whether when one starts with the adjoint of a compact operator, the $\UQNA$ approximating operators can also be taken as adjoint operators, which is exactly what the previous corollary ensures. 

Our last result in this section deals with the relation of the $\UQNA$ and $\RSE$ operators and Fr\'{e}chet differentiability points, similar to the known relation in the case of $\ASE$ operators. Recall that $T$ is a point of Fr\'{e}chet differentiability in $\Lin (X,Y)$ if and only if $T \in \ASE(X,Y)$ at $x_0 \in S_X$ and $Tx_0$ is a point of Fr\'{e}chet differentiability in $Y$ (see \cite[Proposition 1.6]{JMR23} or \cite[Theorem 3.1]{H1975}). Moreover, $T$ is a Fr\'{e}chet differentiability point if and only if $T^*$ is a Fr\'{e}chet differentiability point. We can also relate the notions of $\UQNA$ and $\RSE$ to Fr\'{e}chet differentiability in a similar way as follows. 
 
\begin{prop}\label{lem:dual}
Let $X$ and $Y$ be Banach spaces and $T \in \Lin (X,Y)$. Then  
\begin{enumerate}
\itemsep0.25em
\item $T \in \UQNA (X,Y)$ towards $\|T\| y_0$ for some Fr\'{e}chet differentiability point $y_0 \in S_Y$ if and only if $T^* \in \ASE (Y^*, X^*)$ at $y_0^* \in S_{Y^*}$ with the following property
\[
\text{if } (z_n) \subseteq B_X \text{ satisfies } \re (T^*y_0^*)(z_n) \rightarrow \|T\|, \text{ then } Tz_n \rightarrow \|T\| y_0. 
\]
\item $T \in \RSE (X,Y)$ towards $Tx_0$ for some Fr\'{e}chet differentiability point $Tx_0 \in \|T\| S_Y$ with $x_0 \in S_X$ if and only if $T^* \in \ASE (Y^*, X^*)$ at $y_0^* \in S_{Y^*}$ with the following property: there exists $x_0 \in S_X$ so that 
\[
\text{if } (z_n) \subseteq B_X \text{ satisfies } \re (T^*y_0^*)(z_n) \rightarrow \|T\|, \text{ then } Tz_n \rightarrow Tx_0. 
\]
\end{enumerate} 
\end{prop}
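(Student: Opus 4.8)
The plan is to prove part (a) directly and then obtain part (b) as a corollary, relying on the identity $\RSE(X,Y)=\NA(X,Y)\cap\UQNA(X,Y)$ from Proposition~\ref{prop:una=na+uqna}. Throughout I would normalize $\|T\|=1$ (the case $T=0$ being trivial). The only input needed about Fréchet differentiability is the sequential form of \v{S}mulyan's lemma recalled in Subsection~\ref{subsec:needed-definitions}: a point $y_0\in S_Y$ is a Fréchet differentiability point if and only if every sequence $(f_n)\subseteq B_{Y^*}$ with $\re f_n(y_0)\to 1$ converges in norm (necessarily to the unique norming functional $y_0^*$ of $y_0$); here one uses that $\|f_n\|\le 1$ and $\re f_n(y_0)\to 1$ already force $f_n(y_0)\to 1$.

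For the forward implication of (a), let $y_0\in S_Y$ be a Fréchet differentiability point with $T\in\UQNA(X,Y)$ towards $y_0$, and let $y_0^*\in S_{Y^*}$ be its unique norming functional. To check $T^*\in\ASE(Y^*,X^*)$ at $y_0^*$, I take $(y_n^*)\subseteq B_{Y^*}$ with $\|T^*y_n^*\|\to 1$ and pick $x_n\in B_X$ with $|y_n^*(Tx_n)|$ close to $\|T^*y_n^*\|$; then $\|Tx_n\|\to 1$, so $\UQNA$ yields, after passing to a subsequence and rotating the $x_n$, that $Tx_n\to y_0$, whence $|y_n^*(y_0)|\to 1$. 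Choosing unimodular $\mu_n$ with $\re(\mu_n y_n^*)(y_0)=|y_n^*(y_0)|\to 1$ and applying \v{S}mulyan gives $\mu_n y_n^*\to y_0^*$ in norm, so a subsequence of $(y_n^*)$ converges in norm to a rotation of $y_0^*$; this is precisely $\ASE$ at $y_0^*$. The stated property is then direct: if $\re(T^*y_0^*)(z_n)=\re y_0^*(Tz_n)\to 1$ then $\|Tz_n\|\to 1$, $\UQNA$ forces a subsequence with $Tz_{\sigma(n)}\to\theta y_0$, and evaluating $y_0^*$ pins $\theta=1$; a routine subsequence argument upgrades this to $Tz_n\to y_0$.

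For the converse in (a), suppose $T^*\in\ASE(Y^*,X^*)$ at $y_0^*$ together with the property. Since $\ASE$ operators attain their norm at the witnessing point, $\|T^*y_0^*\|=1$, so there is $(z_n)\subseteq B_X$ with $\re(T^*y_0^*)(z_n)\to 1$; the property gives $Tz_n\to y_0$, and evaluating $y_0^*$ shows $\|y_0\|=1$ and $y_0^*(y_0)=1$. Now for $(x_n)\subseteq B_X$ with $\|Tx_n\|\to 1$, I choose supporting functionals $y_n^*$ with $\re y_n^*(Tx_n)=\|Tx_n\|$, so $\|T^*y_n^*\|\to 1$; by $\ASE$ a subsequence satisfies $y_{\sigma(n)}^*\to\theta y_0^*$ in norm, which turns $\re y_{\sigma(n)}^*(Tx_{\sigma(n)})\to 1$ into $\re(T^*y_0^*)(\theta x_{\sigma(n)})\to 1$; the property then yields $T(\theta x_{\sigma(n)})\to y_0$, i.e.\ $Tx_{\sigma(n)}\to\overline{\theta}y_0$, which is exactly $\UQNA$ towards $y_0$. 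Finally, to see that $y_0$ is a Fréchet differentiability point I invoke \v{S}mulyan: given $(f_n)\subseteq B_{Y^*}$ with $\re f_n(y_0)\to 1$, the sequence $(z_m)$ above gives $\|T^*f_n\|\ge|f_n(Tz_m)|$ for all $m$, and letting $m\to\infty$ yields $\|T^*f_n\|\ge|f_n(y_0)|\ge\re f_n(y_0)$, so $\|T^*f_n\|\to 1$; $\ASE$ produces a subsequential norm limit $\theta y_0^*$, and $\re f_n(y_0)\to 1$ forces $\theta=1$, so $f_n\to y_0^*$ in norm.

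Part (b) then follows by specializing (a) to $y_0=Tx_0/\|T\|$, using Proposition~\ref{prop:una=na+uqna} and the scale-invariance of Fréchet differentiability of the norm. In the forward direction, $T\in\RSE$ towards $Tx_0$ gives $T\in\UQNA$ towards $Tx_0=\|T\|y_0$ with $Tx_0$ (equivalently $y_0$) Fréchet smooth, so (a) yields $T^*\in\ASE$ at the norming functional $y_0^*$ of $Tx_0$ with the required property, the witness being exactly the point $x_0$ where $T$ attains its norm. Conversely, given the property of (b), the converse of (a) already produces $T\in\UQNA$ towards $Tx_0$ with $Tx_0$ Fréchet smooth; it remains only to check $\|Tx_0\|=\|T\|$, which holds because the sequence $(z_n)$ with $\re(T^*y_0^*)(z_n)\to\|T\|$ satisfies $\|Tz_n\|\to\|T\|$ and $Tz_n\to Tx_0$, so $T\in\NA$ at $x_0$, and then $\RSE$ towards $Tx_0$ follows from Proposition~\ref{prop:una=na+uqna}. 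The main obstacle throughout is the bookkeeping of unimodular rotations—guaranteeing that the subsequential limits are exactly $y_0^*$ and $y_0$ rather than arbitrary rotations—which is resolved each time by taking real parts against $y_0$ (resp.\ $y_0^*$) and using $y_0^*(y_0)=1$; the other delicate point is extracting Fréchet differentiability of $y_0$ in the converse, where a norming sequence for $T^*$ at $y_0^*$ must be used to control arbitrary sequences $(f_n)$ peaking at $y_0$.
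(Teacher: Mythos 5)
Your proof is correct and takes essentially the same route as the paper's: the same interplay of near-norming vectors and supporting functionals, $\ASE$/$\UQNA$ subsequence extraction with the rotations pinned down by evaluating against $y_0^*$ (resp.\ $y_0$), and the sequential form of \v{S}mulyan's lemma to transfer Fr\'echet differentiability in both directions. The only difference is organizational—you prove (a) first and deduce (b) via $\RSE(X,Y)=\NA(X,Y)\cap\UQNA(X,Y)$ (Proposition~\ref{prop:una=na+uqna}), while the paper proves (b) directly and declares (a) analogous—and this is immaterial.
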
 

\begin{proof}
We only prove (b), being (a) completely analogous. $(\Rightarrow)$: Suppose $\|T^*y_n^* \| \rightarrow \|T^*\|$. Find $x_n \in B_X$ so that $y_n^* (Tx_n) \rightarrow \|T\|$. Then $Tx_n \rightarrow \theta Tx_0$ passing to a subsequence for some $\theta \in \mathbb{T}$. Thus, $\theta  y_n^* (Tx_0) \rightarrow \|T\|$; This implies that $(\theta y_n^*)$ converges to $y_0^*$ where $y_0^* \in S_Y^*$ satisfies that $y_0^* (Tx_0) = \|T\|$. Thus, $T^* \in \ASE(Y^*, X^*)$ and $T^* y_0^*$ satisfy the desired property. 

$(\Leftarrow)$: If $\|Tx_n\| \rightarrow \|T\|$, then we can find $y_n^* \in S_{Y^*}$ so that $(T^* y_n^*)(x_n) \rightarrow \|T\|$; hence $y_n^* \rightarrow \theta y_0^*$, passing to a subsequence, for some $\theta \in \mathbb{T}$. This shows that $ (T^*y_0^*)(\theta x_n) \rightarrow \|T\|$; hence $Tx_n \rightarrow \theta Tx_0$. So, $T \in \RSE (X,Y)$. Finally, let $z_n^* \in B_Y^*$ so that $z_n^* (Tx_0) \rightarrow \|T\| =y_0^* (Tx_0)$. This would imply that $(z_n^*)$ converges to $y_0^*$; so $Tx_0$ is a Fr\'{e}chet differentiability point.
\end{proof} 

\section*{Acknowledgments}

We are grateful to Rafael Pay\'{a}, Jos\'{e} Rodr\'{\i}guez, and Abraham Rueda Zoca for fruitful discussions on this topic.

M.\ Jung was supported by June E Huh Center for Mathematical Challenges (HP086601)
at Korea Institute for Advanced Study. H.\ del R\'io, A.\ Fovelle, and M.\ Mart\'in were partially supported by MICIU/AEI/10.13039/501100011033 and ERDF/EU through the grant PID2021-122126NB-C31, and by ``Maria de Maeztu'' Excellence Unit IMAG, funded by MICIU/AEI/10.13039/501100011033 with reference CEX2020-001105-M.

\bibliographystyle{alpha}

\end{document}